\newtheorem{thm}{Theorem}[section]
\newtheorem{lemma}[thm]{Lemma}
\newtheorem{cor}[thm]{Corollary}
\newtheorem{prop}[thm]{Proposition}
\newcounter{other}            
\newtheorem{otherth}[other]{Theorem}              
\newtheorem{otherl}[other]{ Lemma}        
\renewcommand{\Re}{\mbox{Re}}
\def\B{\mathcal{B}}
\def\D{\mathbb{D}}
\def\R{\mathbb R}
\def\T{\mathbb{T}}
\def\C{\mathbb{C}}
\def\Q{\mathcal{Q}}
\def\qp{\Q_p}
\def\qs{\Q_s}
\def\qst{\Q_s(\T)}
\def\f{\frac}
\def \qpst {\Q_s^p(\T)}
\def \qpsd {\Q_s^p(\D)}
\numberwithin{equation}{section}
\begin{document}

\title[Boundary multipliers]
{Boundary multipliers of a family of M\"obius invariant function spaces}
\author{Guanlong Bao and  Jordi Pau}
\address{Guanlong Bao\\
Department of Mathematics\\
    Shantou University\\
    Shantou, Guangdong 515063, China}
\email{glbaoah@163.com}

\address{Jordi Pau\\
Departament de Matem\'atica Aplicada i Analisi\\
    Universitat de Barcelona\\
    08007 Barcelona, Spain}
\email{jordi.pau@ub.edu}

\thanks{G. Bao is supported in part by NSF of China (No. 11371234).  J. Pau was supported by SGR grant 2014SGR289 (Generalitat de Catalunya) and DGICYT grant MTM2011-27932-$C$02-01 (MCyT/MEC)}
\subjclass[2010]{30H25, 30J10, 46E15}

\keywords {pointwise multipliers, Carleson measures, Blaschke products, $\qpst$ spaces}

\begin{abstract}
For $1<p<\infty$ and $0<s<1$,  let  $\qpst$ be the space of those functions $f$ which belong to
 $ L^p(\T)$ and satisfy
$$
\sup_{I\subseteq \T}\f{1}{|I|^s}\int_I\int_I\f{|f(\zeta)-f(\eta)|^p}{|\zeta-\eta|^{2-s}}|d\zeta||d\eta|<\infty,
$$
where $|I|$ is the length of an arc $I$ of the unit circle $\T$ . In this paper, we give a complete description of multipliers between  $\qpst$ spaces. The spectra  of multiplication operators on $\qpst$ are  also obtained.
\end{abstract}

\maketitle

\section{Introduction}

An important problem of  studying  function  spaces  is to characterize  the pointwise multipliers
of such spaces.   For Banach  function spaces $X$ and $Y$,  denote by $M(X, Y)$ the class   of all pointwise  multipliers from $X$ to $Y$. Namely,
\[
 M(X, Y)=\{f:  fg\in Y \ \text{for all }\  g\in X\}.
\]
If $X=Y$, we  just write $M(X, Y)$ as $M(X)$ for the collection  of  multipliers  of $X$. For any $g\in M(X,Y)$, denote by $M_g$ the multiplication operator induced by $g$, that is, $M_g(f)=gf$. By the closed graph theorem, $M_g$ is a bounded operator. In this paper we characterize the pointwise multipliers between a certain family of function spaces on the unit circle. These spaces appear in a natural way as the boundary values of a certain family of analytic M\"{o}bius invariant spaces on the disk \cite{Zha1} that has been attracted much attention recently.

Denote by $\T$ the boundary of  the unit disk $\D$ in the complex plane $\C$.  Let $H(\D)$ be  the space  of all analytic functions on $\D$  and let $H^\infty$ be the class of bounded analytic functions on $\D$. For $1<p<\infty$ and $s\ge 0$, consider the analytic Besov type space $B_ p(s)$ consisting of those functions
$f\in H(\D)$ with
$$
\|f\|_{B_p(s)}=\left(\int_{\D}|f'(z)|^p\,(1-|z|^2)^{p-2+s}\,dA(z)\right)^{1/p} <\infty,
$$
where $dA$ denotes the   Lebesgue measure on $\D$. A norm in $B_ p(s)$ is given by $|f(0)|+\|f\|_{B_ p(s)}$. For $a\in \D$, let
$$
\sigma_a(z)=\frac{a-z}{1-\overline{a}z}, \qquad z\in \D,
$$
be  a M\"obius transformation of $\D$. The space $B_ p=B_ p(0)$ is M\"{o}bius invariant in the sense that $\|f\circ \sigma_ a\|_{B_ p}=\|f\|_{B_ p}$, and $B_ 2$ is the classical Dirichlet space. For $s>0$ and $1<p<\infty$, we denote by $\qpsd$ the M\"{o}bius invariant space generated by $B_ p(s)$, that is, $f\in \qpsd$ if $f\in H(\D)$ and
\[
\|f\|^p_{\qpsd} =\sup_{a\in \D} \|f\circ \sigma_ a \|^p_{B_ p(s)}<\infty.
\]
The space $\qpsd$ coincides with $F(p,p-2,s)$ where $F(p,q,s)$ is the family of function spaces studied in \cite{Zha1} and \cite{Rat}. In particular, for $s>1$ the spaces  $\qpsd$ are the same and equal to the Bloch space  $\B$ (the ``maximal" M\"{o}bius invariant space) which consists of all functions $f\in H(\D)$ with
$$
\|f\|_{\B}=\sup_{z\in \D}(1-|z|^2)|f'(z)|<\infty.
$$
When $p=2$, one has $\mathcal{Q}^2_ s(\D)=\Q_ s(\D)$ the holomorphic $Q$ spaces introduced in \cite{AXZ} and widely studied in the monographs \cite{Xi3,Xi5}.
 In particular, $\Q_1(\D)=BMOA$, the space of analytic functions with bounded mean oscillation  \cite{Gi}.
M. Ess\'en and J. Xiao \cite{EX} gave that if $0<s<1$ and $f$ is in the Hardy space $H^2$, then $f\in  \Q_s(\D)$ if and only if $f\in \qst$, the space of functions $f\in L^2(\T)$ with
$$
\|f\|^2_{\qst}=\sup_{I\subseteq \T}\f{1}{|I|^s}\int_I\int_I\f{|f(\zeta)-f(\eta)|^2}{|\zeta-\eta|^{2-s}}|d\zeta||d\eta|<\infty,
$$
where $|I|$ is the length of an arc $I$ of the unit circle $\T$ (a version of these spaces for several real variables was studied in \cite{EJPX}).
If $s>1$, J. Xiao \cite{Xi2} pointed out that $\qst$ are equal to $BMO(\T)$, the space of bounded mean oscillation on $\T$. For $p>1$, via the John-Nirenberg inequality (see \cite{Gi, JN}), one gets
$$
\|f\|_{BMO(\T)}^p \thickapprox \sup_{I\subseteq \T}\f{1}{|I|}\int_I|f(\zeta)-f_I|^p|d\zeta|,
$$
 where $f_I$ is the average of $f$ over $I$, that is
$$
f_I=\f{1}{|I|}\int_If(\zeta)|d\zeta|.
$$
In view of that it is natural to consider, for $1<p<\infty$ and $s>0$, the spaces $\qpst$ consisting of functions  $f\in L^p(\T)$ such that
\begin{equation}\label{Eq1-1}
\|f\|^p_{\qpst}=\sup_{I\subseteq \T}\f{1}{|I|^s}\int_I\int_I\f{|f(\zeta)-f(\eta)|^p}{|\zeta-\eta|^{2-s}}|d\zeta||d\eta|<\infty.
\end{equation}
A true norm in $\qpst$ is given by $\|f\|_{*,\qpst}=\|f\|_{L^p(\T)}+\|f\|_{\qpst}$. We are going to study these spaces, and we will see that if $f$ is in the Hardy space $H^p$, then $f\in \qpsd$ if and only if $f\in \qpst$. Also, we give a complete description of the pointwise multipliers $M(\mathcal{Q}^{p_ 1}_ s(\T),\mathcal{Q}^{p_ 2}_ r(\T))$ for $0<p_ 1,p_ 2<\infty$ and $0<s,r<1$.  It is worth mentioning that D. Stegenga \cite{Steg1} characterized the multipliers of bounded mean oscillation spaces on the unit circle (see also \cite{OF}), and  L. Brown and A.  Shields \cite{BS} described the pointwise multipliers of the Bloch space. A characterization of the pointwise multipliers $M(\Q_s(\D))$ was obtained in \cite{PP1} proving a conjecture stated in \cite{Xi1}. See \cite{GGP, OF, PO, Steg2, WY,  Zhu1} for more results on pointwise multipliers of function spaces.\\
\mbox{}
\\
The following is the main result of the paper.

\begin{thm}\label{mt1}
 Let $1<p_1, p_2<\infty$ and $0<s, r<1$. Then the following are true.
\begin{enumerate}
\item [(1)] If $p_1\leq p_2$ and $s\leq r$, then $f\in M(\Q_s^{p_1}(\T), \Q_r^{p_2}(\T))$ if and only if $f\in L^\infty(\T)$ and
\begin{equation}\label{Eq1-2}
\sup_{I\subseteq \T} \f{1}{|I|^r}\left(\log\f{2}{|I|}\right)^{p_2}\int_I\int_I \f{|f(\zeta)-f(\eta)|^{p_2}}{|\zeta-\eta|^{2-r}}|d\zeta||d\eta| <\infty.
\end{equation}
\item[(2)] Let  $p_1>p_2$ and $s\leq r$. If $\f{1-s}{p_1}>\f{1-r}{p_2}$, then $f\in M(\Q_s^{p_1}(\T), \Q_r^{p_2}(\T))$ if and only if $f\in L^\infty(\T)$ and $f$ satisfies \eqref{Eq1-2}. If $\f{1-s}{p_1}\leq\f{1-r}{p_2}$, then  $M(\Q_s^{p_1}(\T), \Q_r^{p_2}(\T))=\{0\}$.\\
\item[(3)] If $s>r$, then $M(\Q_s^{p_1}(\T), \Q_r^{p_2}(\T))=\{0\}$.
\end{enumerate}
\end{thm}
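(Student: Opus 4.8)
The plan is to reduce everything to the pointwise product identity and then treat the membership conditions and the two degenerate regimes separately. Writing $f_I=\frac{1}{|I|}\int_I f(\zeta)\,|d\zeta|$ and using
\[
(fg)(\zeta)-(fg)(\eta)=f(\zeta)\bigl(g(\zeta)-g(\eta)\bigr)+g(\eta)\bigl(f(\zeta)-f(\eta)\bigr),
\]
together with the splitting $g(\eta)=(g(\eta)-g_I)+g_I$, I would bound the local integrand of $fg$ over an arc $I$ by three pieces: a \emph{bounded} piece $\|f\|^{p_2}_{L^\infty}|g(\zeta)-g(\eta)|^{p_2}$, a \emph{logarithmic} piece $|g_I|^{p_2}|f(\zeta)-f(\eta)|^{p_2}$, and a \emph{cross} piece $|g(\eta)-g_I|^{p_2}|f(\zeta)-f(\eta)|^{p_2}$. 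Two soft facts feed this: first, testing against the constant $1\in\Q^{p_1}_s(\T)$ shows every multiplier already lies in the target $\Q^{p_2}_r(\T)$; second, the elementary estimate $\int_I\int_I|h(\zeta)-h(\eta)|^{p}\,|d\zeta||d\eta|\lesssim|I|^{2}\,\|h\|^p_{\Q^p_s(\T)}$ yields the inclusion $\Q^p_s(\T)\subseteq BMO(\T)$ and hence the growth bound $|g_I|\lesssim\log\frac{2}{|I|}\,\|g\|_{*,\Q^{p_1}_s(\T)}$, which is exactly the source of the logarithm in \eqref{Eq1-2}.

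For the sufficiency half of (1) (and of the nonvanishing branch of (2)), assume $f\in L^\infty(\T)$ and \eqref{Eq1-2}. The bounded piece is $\le\|f\|^{p_2}_\infty\,\frac{1}{|I|^r}\int_I\int_I\frac{|g(\zeta)-g(\eta)|^{p_2}}{|\zeta-\eta|^{2-r}}$, which I control by the inclusion $\Q^{p_1}_s(\T)\subseteq\Q^{p_2}_r(\T)$, valid in the nonvanishing regime $\frac{1-s}{p_1}\ge\frac{1-r}{p_2}$ (note that case (1), where $s\le r$ and $p_1\le p_2$, automatically lies there). The logarithmic piece is $|g_I|^{p_2}$ times the $f$-integral, and here $|g_I|^{p_2}\lesssim(\log\frac{2}{|I|})^{p_2}\|g\|^{p_2}$ cancels exactly against the $(\log\frac{2}{|I|})^{-p_2}$ decay forced by \eqref{Eq1-2}. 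The delicate term is the cross piece, where a single singular kernel $|\zeta-\eta|^{-(2-r)}$ must be shared between the oscillation of $f$ and that of $g-g_I$; I would handle it by Hölder's inequality after a dyadic decomposition of $I\times I$ in the size of $|\zeta-\eta|$, reconstructing the $\Q^{p_2}_r$–seminorm of $f$ from \eqref{Eq1-2} on one factor and a $BMO$/John–Nirenberg bound for $g-g_I$ on the other. This cross term is the main analytic obstacle of the positive direction.

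For necessity, I would first prove $f\in L^\infty(\T)$ with $\|f\|_\infty\lesssim\|M_f\|$. Fix an arc $I$ and take a smooth bump $g_I$ equal to $1$ on $I$, supported in a fixed dilate, normalized so that $\|g_I\|_{*,\Q^{p_1}_s(\T)}\lesssim1$ (a scaling computation shows such bumps have seminorm $\approx$ their amplitude, uniformly in $|I|$). Testing on an arc $J$ that straddles the edge of the support, where $fg_I$ jumps from $\approx f$ to $0$, the mean-oscillation lower bound coming from $\Q^{p_2}_r(\T)\subseteq BMO(\T)$ detects $|f_I|^{p_2}\lesssim\|fg_I\|^{p_2}_{\Q^{p_2}_r(\T)}\lesssim\|M_f\|^{p_2}$; letting $J\downarrow\zeta_0$ and using Lebesgue differentiation gives $f\in L^\infty$. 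To get \eqref{Eq1-2} I would instead test against $g_a(\zeta)=\log\frac{2}{1-\overline{a}\zeta}$ with $a=(1-|I|)\zeta_0$, which lies in $\Q^{p_1}_s(\T)$ with bounded norm and is $\approx\log\frac{2}{|I|}$ with only $O(1)$ oscillation on $I$. In the Leibniz expansion of $fg_a$ on $I$ the genuinely large contribution is $|g_a|^{p_2}\approx(\log\frac{2}{|I|})^{p_2}$ times the $f$-oscillation, while the term carrying $g_a(\zeta)-g_a(\eta)$ is $O(1)$ thanks to $f\in L^\infty$; subtracting the lower-order part isolates exactly \eqref{Eq1-2}.

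Finally, for the vanishing statements in (2) and (3) the organizing quantity is the index $\frac{1-s}{p}$. When $s>r$, or when $p_1>p_2$ and $\frac{1-s}{p_1}\le\frac{1-r}{p_2}$, I would exhibit a single $g\in\Q^{p_1}_s(\T)$—built from a lacunary/Blaschke-type series or a tuned local singularity whose scaling sits exactly at the threshold—such that $g\notin\Q^{p_2}_r(\T)$ in a way that persists under multiplication: localizing at a Lebesgue point $\zeta_0$ with $f(\zeta_0)\ne0$, the product $fg$ behaves like $f(\zeta_0)\,g$ near $\zeta_0$ and hence escapes $\Q^{p_2}_r(\T)$, forcing $f\equiv0$. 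Designing these extremal functions so that they realize membership in the source but failure in the target at the \emph{sharp} threshold $\frac{1-s}{p_1}=\frac{1-r}{p_2}$, and controlling the product near a mere Lebesgue point of the bounded function $f$, is the part I expect to be hardest, and it is where the Blaschke-product and Carleson-measure machinery will be needed.
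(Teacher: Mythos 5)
Your necessity argument is essentially the paper's: the logarithmic test functions $g_a(\zeta)=\log\f{2}{1-\bar a\zeta}$ with $a=(1-|I|)\zeta_ 0$ are exactly what the paper uses to extract \eqref{Eq1-2}, and your bump-function route to $f\in L^\infty(\T)$ is a workable variant of the paper's appeal to Stegenga's lemma. The two places where your plan has genuine gaps are precisely the two places you flag as ``hard,'' and in both cases the difficulty is not a technicality but the actual content of the theorem. First, the cross term in the sufficiency direction cannot be closed by H\"older plus a dyadic decomposition as you propose. If you split the kernel $|\zeta-\eta|^{-(2-r)}$ between the two factors with weights $\alpha$ and $1-\alpha$ and apply H\"older with exponents $(u,u')$, then convergence of the $g$-factor forces $(2-r)(1-\alpha)u'<1$, while keeping the $f$-factor controlled by \eqref{Eq1-2} forces $\alpha u\le 1$; these give $(2-r)(1-\alpha)u'\ge (2-r)/1>1$, a contradiction, so no choice of exponents works. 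Doing H\"older annulus by annulus on $|\zeta-\eta|\thickapprox 2^{-k}|I|$ fares no better: using \eqref{Eq1-2} on $2^k$ subarcs for the $f$-factor and John--Nirenberg for the $g$-factor yields pieces of size $\thickapprox |I|^r 2^{k(1-r)}\left(\log\f{2^{k+1}}{|I|}\right)^{-p_2/u}$, whose sum diverges geometrically. What is missing is a genuine embedding theorem: the paper transfers the problem to the disk (Lemma \ref{EL} and Theorem \ref{t3}), where the Leibniz rule for $\nabla(\widehat f\,\widehat g)$ has only two terms, and controls the dangerous term $\int_{\D}|\widehat g|^{p_2}|\nabla\widehat f|^{p_2}(1-|z|^2)^{p_2-2}(1-|\sigma_a(z)|^2)^r\,dA$ by writing $\widehat g$ through its analytic completion $h$, splitting $h(z)=h(a)+(h(z)-h(a))$, and invoking Lemma \ref{L-PZ} (the Pau--Zhao embedding of $B_{p_2}(r)$ into $L^{p_2}(d\mu)$ for $p_2$-logarithmic $r$-Carleson measures $\mu$) together with \cite[Proposition 2.8]{PZ}. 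Some substitute for that embedding is unavoidable; it is the real engine of the positive direction.

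Second, the vanishing statements cannot be obtained by your localization mechanism. If $\zeta_0$ is a Lebesgue point of $f$ with $f(\zeta_0)\neq 0$, then $fg-f(\zeta_0)g=(f-f(\zeta_0))g$, and although $f-f(\zeta_0)$ is small in mean near $\zeta_0$, any witness $g\in \Q_s^{p_1}(\T)\setminus\Q_r^{p_2}(\T)$ is necessarily unbounded near $\zeta_ 0$ in the relevant sense, so the seminorm of the error term is not controlled by Lebesgue differentiation; ``$fg$ behaves like $f(\zeta_0)g$'' is exactly what fails. The paper needs two different and harder mechanisms. For $\f{1-s}{p_1}\le\f{1-r}{p_2}$ in part (2) it takes a lacunary series and randomizes its signs with Rademacher functions (Lemma \ref{LS}): Khinchine's inequality turns the averaged quantity $\int_0^1 I_{g_t}(f,0)\,dt$ into a lower bound of the form $\left(\int_{\T}|f|\right)\cdot\sum_k|a_k|^{p_2}2^{k(1-r)}=\infty$, while the multiplier hypothesis bounds the same average; only the average over signs, not any single $g$, produces a lower bound linear in $\int_{\T}|f|$. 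For part (3) ($s>r$) no lacunary series can serve as a witness at all when $\f{1-s}{p_1}\ge\f{1-r}{p_2}$ (the paper's Remark 2), so the witness must be a Blaschke product whose zeros approach a boundary point inside a tangential region $\Omega_{1/t,c}$ (Lemma \ref{KC}); non-tangential sequences provably cannot be $s$-Carleson without being $r$-Carleson, by \cite{GPV}. Then concluding $f=0$ a.e.\ requires Lemma \ref{LIn} plus subharmonicity to get $\widehat{|f|}(a_k)\to 0$, and crucially the Nagel--Rudin--Shapiro tangential limit theorem (via Proposition \ref{P-TA}) to convert limits along the tangential sequence into a.e.\ boundary values of $\widehat{|f|}+i\widetilde{|f|}$. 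Your sketch contains neither the randomization nor the tangential-approach/boundary-limit machinery, and the phase ``persists under multiplication'' is exactly the statement these tools are needed to prove. A smaller point: the inclusion $\Q_s^{p_1}(\T)\subseteq\Q_r^{p_2}(\T)$ is not governed by the single index $\f{1-s}{p}$ as your parenthetical suggests (for $p_1\le p_2$ it holds iff $s\le r$, and it fails at equality $\f{1-s}{p_1}=\f{1-r}{p_2}$ when $p_1>p_2$); this is the content of Theorem \ref{inclusions}, whose proof itself needs inner-function witnesses.
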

\mbox{}
\\
Note that, when $p_ 1=p_ 2=2$ and $s=r$, part (1) of Theorem \ref{mt1} proves Conjecture 2.5 stated in \cite{Xi1}. However, as seen in the proof, this conjecture is an immediate consequence of the results and methods in \cite{PP1}.\\
\mbox{}
\\
Next we give an application. Let $T$ be a bounded linear operator on a Banach space $X$. The spectrum of $T$ is defined as
$$
\sigma(T)=\{\lambda\in\C: \  T-\lambda E \text{ is not invertible}\},
$$
where $E$ is the identity operator on $X$. R. Allen and F. Colonna \cite{AC} gave  spectra of multiplication operators on the Bloch space. In this paper, we also consider the   spectra of multiplication operators on $\qpst$ spaces. For $f\in \qpst$, let $\mathcal R(f)$ be the essential range of $f$. Namely,  $\mathcal R(f)$ is the set of all $\lambda$ in $\C$ for which $\{\zeta\in \T: |f(\zeta)-\lambda|<\varepsilon\}$ has positive measure for every $\varepsilon>0$. By  \cite[p. 57]{Do}, if $f\in L^\infty(\T)$, then $\mathcal R(f)$ is a compact subset of $\C$.

\begin{thm} \label{t2}
Suppose $1<p<\infty$ and $0<s<1$. Let $f$ be the symbol of a bounded multiplication operator $M_f$ on $\qpst$ space. Then $\sigma(M_f)=\mathcal R(f)$.
\end{thm}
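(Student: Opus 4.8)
The plan is to reduce the whole statement to the pointwise multiplier characterization provided by Theorem \ref{mt1}. First I would observe that $M_ f-\lambda E=M_{f-\lambda}$ for every $\lambda\in\C$, so that $\lambda\in\sigma(M_ f)$ holds precisely when $M_{f-\lambda}$ fails to be invertible on $\qpst$. Since $M_ f$ is bounded, $f\in M(\qpst)$, and by part (1) of Theorem \ref{mt1} (with $p_ 1=p_ 2=p$ and $s=r$) this means $f\in L^\infty(\T)$ together with condition \eqref{Eq1-2} (taken with $r=s$ and $p_ 2=p$). I would also record that the constant function $1$ belongs to $\qpst$, since its seminorm vanishes, so constants lie in $M(\qpst)$ and may be used as test functions.

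For the inclusion $\sigma(M_ f)\subseteq\mathcal R(f)$ I argue by contraposition. If $\lambda\notin\mathcal R(f)$, then $|f-\lambda|\ge\delta$ a.e.\ on $\T$ for some $\delta>0$, so $h:=1/(f-\lambda)\in L^\infty(\T)$. From the elementary identity
\[
h(\zeta)-h(\eta)=\f{f(\eta)-f(\zeta)}{(f(\zeta)-\lambda)(f(\eta)-\lambda)}
\]
one obtains $|h(\zeta)-h(\eta)|\le\delta^{-2}\,|f(\zeta)-f(\eta)|$, and substituting this bound into \eqref{Eq1-2} shows at once that $h$ inherits the logarithmic condition from $f$. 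Hence $h\in M(\qpst)$ by Theorem \ref{mt1}, and since $M_ h M_{f-\lambda}=M_{f-\lambda}M_ h=E$, the operator $M_{f-\lambda}$ is invertible, giving $\lambda\notin\sigma(M_ f)$.

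For the reverse inclusion $\mathcal R(f)\subseteq\sigma(M_ f)$ I again use contraposition: assuming $M_{f-\lambda}$ is invertible, I must produce $\lambda\notin\mathcal R(f)$. Applying surjectivity of $M_{f-\lambda}$ to the constant $1\in\qpst$ yields $h\in\qpst$ with $(f-\lambda)h=1$ a.e.; in particular $f-\lambda\neq0$ a.e.\ and $h=1/(f-\lambda)$ a.e. Applying surjectivity to an arbitrary $g\in\qpst$ produces $u\in\qpst$ with $(f-\lambda)u=g$, whence $u=gh$ a.e.\ and therefore $gh\in\qpst$. Thus $h=1/(f-\lambda)$ is a pointwise multiplier of $\qpst$, and the $L^\infty$ component of the characterization in Theorem \ref{mt1} forces $h\in L^\infty(\T)$. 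Consequently $|f-\lambda|$ is bounded below by a positive constant a.e., that is, $\lambda\notin\mathcal R(f)$. Combining the two inclusions gives $\sigma(M_ f)=\mathcal R(f)$.

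The estimates in the first inclusion are routine. The step that deserves real care is the second one: I must extract from the mere invertibility of $M_{f-\lambda}$ the fact that its inverse coincides with the multiplication operator $M_ h$ for $h=1/(f-\lambda)$. The point is that surjectivity forces $gh\in\qpst$ for every $g\in\qpst$, so $h$ is genuinely a multiplier, and only then can the crucial $L^\infty$ part of Theorem \ref{mt1} be invoked to bound $1/(f-\lambda)$ away from zero. This is exactly where the essential range enters, and it is the delicate point of the argument.
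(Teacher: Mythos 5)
Your proposal is correct and takes essentially the same approach as the paper: both directions hinge on the multiplier characterization of Theorem \ref{mt1}, using the elementary difference identity to transfer condition \eqref{Eq1-2} from $f$ to $1/(f-\lambda)$. The only difference is that you spell out, via surjectivity applied to the constant function $1$ and then to an arbitrary $g\in\qpst$, why the inverse of $M_{f}-\lambda E$ must be the multiplication operator by $1/(f-\lambda)$ --- a step the paper's proof asserts as clear.
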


The paper is organized as follows. In Section 2, we give some preliminaries as well as basic properties of $\qpst$ spaces, such as inclusion relations or a characterization in terms of Carleson type measures. The proof of Theorem \ref{mt1} is given in Section 3. Particulary interesting is the proof of part (3), where we are in need to use the  results of A. Nagel, W. Rudin and J. Shapiro \cite{NRS},
on tangential boundary behavior of functions in weighted analytic Besov  spaces.   In Section 4, we prove Theorem \ref{t2}, and in the last section, we give the analytic versions of Theorems \ref{mt1} and \ref{t2}.

Throughout this  paper, for a positive number $\lambda$ and an arc $I\subseteq \T$, denote by $\lambda I$ the arc with the same center as $I$ and with the length $\lambda |I|$. The symbol $A\thickapprox B$ means that $A\lesssim B\lesssim
A$. We say that $A\lesssim B$ if there exists a constant $C$ such that $A\leq CB$.

\section{Preliminaries and basic properties}

An important tool to study function spaces is  Carleson type measures. Given an arc $I$ on $\T$, the
 Carleson sector $S(I)$ is given by
$$
S(I)=\{r\zeta \in \D: 1-\f{|I|}{2\pi}<r<1, \ \zeta\in I\}.
$$
For $s>0$, a positive Borel measure $\mu$ on $\D$ is said to be an $s$-Carleson measure if
$$
\sup_{I\subseteq \T} \f{\mu(S(I))}{|I|^s}<\infty.
$$
When $s=1$, we get the classical Carleson measures, characterizing when $H^p\subset L^p(\D,\mu)$, where
for $0<p<\infty$, $H^p$ denotes the classical Hardy space \cite{Du} of  functions  $f\in H(\D)$ for which
$$
\sup_{0<r<1} M_p(r, f)<\infty.
$$
Here
$$
M_p(r, f)= \left(\f{1}{2\pi}\int_0^{2\pi}|f(re^{i\theta})|^p d\theta \right)^{1/p}.
$$
By \cite{AXZ},  $\mu$ is an  $s$-Carleson measure  if and only if
\[
\sup_{a\in \D}\int_{\D}\left(\f{1-|a|^2}{|1-\bar{a}z|^2}\right)^sd\mu(z)<\infty.
\]
Because
\[
\|f\circ \sigma_ a \|^p_{B_ p(s)}=\int_{\D} |f'(z)|^p \,(1-|z|^2)^{p-2}\,(1-|\sigma_ a(z)|^2)^s \,dA(z),
\]
 we can immediately see that $f\in \qpsd$ if and only if $|f'(z)|^p\,(1-|z|^2)^{p-2+s}\,dA(z)$ is an $s$-Carleson measure.\\

A function  $f\in H(\D)$  is called an inner function if it is an $H^\infty$-function with radial limits of modulus one almost everywhere on the unit circle.
A sequence $\{a_k\}_{k=1}^\infty\subseteq \D$  is said to be a Blaschke sequence if
$$
\sum_{k=1}^\infty (1-|a_k|)<\infty.
$$
The above condition implies the convergence of the corresponding Blaschke product $B$ defined as
$$
B(z)=\prod_{k=1}^\infty \f{|a_k|}{a_k}\f{a_k-z}{1-\overline{a_k}z}.
$$
We also  need the characterizations of inner functions  in $\qpsd$ spaces.
  M. Ess\'en and J. Xiao \cite{EX} characterized inner functions in $\Q_s(\D)$ spaces. Later,
F. P\'erez-Gonz\'alez and J. R\"atty\"a \cite{PR} described inner functions in $\qpsd$ spaces as follows.

\begin{otherth} \label{T-Inner}
Let $0<s<1$ and $p>\max\{s, 1-s\}$. Then an inner function belongs to $\qpsd$ if and only if it is a Blaschke product associated with a sequence $\{z_k\}_{k=1}^\infty\subseteq \D$ which satisfies that
$\sum_k (1-|z_k|)^s \delta_{z_k}$ is an $s$-Carleson measure, that is
$$
\sup_{a\in \D}\sum_{k=1}^\infty  \left(1-|\sigma_a(z_k)|^2\right)^s<\infty.
$$
\end{otherth}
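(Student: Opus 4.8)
The plan is to use the Carleson-measure description of $\qpsd$ recorded above: an inner function $\theta$ lies in $\qpsd$ if and only if $d\mu_\theta(z)=|\theta'(z)|^p(1-|z|^2)^{p-2+s}\,dA(z)$ is an $s$-Carleson measure, so everything reduces to estimating $\mu_\theta(S(I))$ over arcs $I$. Throughout I will use the elementary estimate $\int_{\D}\frac{(1-|z|^2)^{b}}{|1-\overline{w}z|^{c}}\,dA(z)\thickapprox(1-|w|^2)^{b+2-c}$, valid for $b>-1$ and $c>b+2$; the hypothesis $p>\max\{s,1-s\}$ is exactly what makes the instances of this estimate occurring below convergent.

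Sufficiency. Assume $\nu=\sum_k(1-|z_k|^2)^s\delta_{z_k}$ is $s$-Carleson. Since $|b_j|\le1$ on $\D$, differentiating the product gives the pointwise bound $|B'(z)|\le\sum_k|b_k'(z)|=\sum_k\frac{1-|z_k|^2}{|1-\overline{z_k}z|^2}$. When $p\ge1$ I combine this with the Schwarz--Pick inequality $|B'(z)|(1-|z|^2)\le1$ to write $|B'(z)|^p(1-|z|^2)^{p-2+s}=\big(|B'(z)|(1-|z|^2)\big)^{p-1}\cdot|B'(z)|\,(1-|z|^2)^{s-1}\le\sum_k\frac{1-|z_k|^2}{|1-\overline{z_k}z|^2}(1-|z|^2)^{s-1}$, which linearizes the $p$-th power; when $0<p\le1$ I instead use the subadditivity $(\sum a_k)^p\le\sum a_k^p$. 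In either case, integrating over $S(I)$ and applying the displayed integral estimate reduces the matter to the standard Carleson computation $\sum_k(1-|z_k|^2)\int_{S(I)}\frac{(1-|z|^2)^{s-1}}{|1-\overline{z_k}z|^2}\,dA\lesssim|I|^s$ (resp. its $p\le1$ analogue), which follows by splitting the zeros into those inside $S(2I)$ and those outside and invoking that $\nu$ is $s$-Carleson.

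Necessity. Suppose $\theta\in\qpsd$ is inner. For $0<p\le2$ I reduce to the known case $p=2$: Schwarz--Pick gives $t:=|\theta'(z)|(1-|z|^2)\in[0,1]$, whence $|\theta'|^2(1-|z|^2)^s=t^{2-p}\,|\theta'|^p(1-|z|^2)^{p-2+s}\le|\theta'|^p(1-|z|^2)^{p-2+s}$ because $2-p\ge0$; integrating over $S(I)$ shows $\theta\in\qs(\D)=\Q_s^2(\D)$, and the characterization of inner functions in $\qs(\D)$ due to Ess\'en and Xiao \cite{EX} finishes this range. The genuinely hard case is $p>2$, where this comparison runs the wrong way and no monotonicity in $p$ is available. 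Here I would argue directly: first rule out a nontrivial singular inner factor by showing that its presence forces $\mu_\theta(S(I))/|I|^s\to\infty$ along arcs shrinking to the support of the singular measure, contradicting the $s$-Carleson condition; and then, for the Blaschke part, recover the $s$-Carleson condition on the zeros by a localization argument---using the subharmonicity of $|\theta'|^p$ to pass from $\mu_\theta(S(I))$ to point estimates of $|\theta'|$ on Whitney boxes, and extracting from the zeros a uniformly separated subsequence (with an argument-principle count to absorb multiplicities and clustering) on which $|\theta'(z_k)|\thickapprox(1-|z_k|)^{-1}$, so that $\mu_\theta(S(I))\gtrsim\sum_{z_k\in S(I)}(1-|z_k|^2)^s$.

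The main obstacle is precisely this $p>2$ necessity. For $p=2$ one has the exact Green/Littlewood--Paley identity $\Delta|\theta|^2=4|\theta'|^2$, which converts $\int_{S(I)}|\theta'|^2(1-|z|^2)^s\,dA$ into an integral of $1-|\theta|^2$ against $(1-|z|^2)^{s-2}\,dA$ and lets the elementary lower bound $1-|\theta(z)|^2\gtrsim\sum_k\frac{(1-|z_k|^2)(1-|z|^2)}{|1-\overline{z_k}z|^2}+(1-|z|^2)\int\frac{d\sigma(\zeta)}{|z-\zeta|^2}$, coming from $\log\frac1{|\theta|}\ge\sum_k\log\frac1{|b_k|}+\int P\,d\sigma$, dispose of clustered zeros and the singular measure $\sigma$ simultaneously and painlessly. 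For $p\neq2$ there is no such clean identity, so the crux is to find a substitute averaging inequality for $|\theta'|^p$---controlling the $p$-th power integral from below by the zero distribution even when zeros accumulate---which is where the careful Whitney/separation analysis must do the work that Green's theorem does for free when $p=2$.
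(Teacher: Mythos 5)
First, a point of comparison: the paper does not prove this statement at all --- it is Theorem A, quoted from P\'erez-Gonz\'alez and R\"atty\"a \cite{PR} --- so your proposal can only be judged against what a complete proof requires. Within that frame, two of your three pieces are sound. The sufficiency argument (the pointwise bound $|B'|\le\sum_k|b_k'|$, linearized via Schwarz--Pick for $p\ge1$ or by subadditivity for $p\le1$, followed by the dyadic splitting of the zeros relative to $S(2I)$ using the $s$-Carleson hypothesis) is correct and standard, and you correctly identify where $p>\max\{s,1-s\}$ enters. The necessity argument for $p\le 2$ --- the pointwise inequality $|\theta'|^2(1-|z|^2)^s\le|\theta'|^p(1-|z|^2)^{p-2+s}$ from Schwarz--Pick, then the Ess\'en--Xiao theorem for $\Q_s(\D)=\Q_s^2(\D)$ --- is a clean and correct reduction, and it also disposes of the singular factor in that range.

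The gap is the case $p>2$, and it is fatal to the route you chose, not a missing detail. Your plan rests on proving, by purely local means (subharmonicity on Whitney boxes, a separated subsequence on which $|\theta'(z_k)|\thickapprox(1-|z_k|)^{-1}$), the box-by-box inequality $\mu_\theta(S(I))\gtrsim\sum_{z_k\in S(I)}(1-|z_k|)^s$, where $d\mu_\theta=|\theta'|^p(1-|z|^2)^{p-2+s}\,dA$. That inequality is false with any constant independent of $\theta$. Take $\theta=\sigma_{z_0}^N$, a single zero of multiplicity $N$; the change of variables $w=\sigma_{z_0}(z)$ gives
\begin{equation*}
\int_{\D}|\theta'(z)|^p(1-|z|^2)^{p-2+s}\,dA(z)
=N^p(1-|z_0|^2)^s\int_{\D}|w|^{(N-1)p}\,\f{(1-|w|^2)^{p-2+s}}{|1-\overline{z_0}w|^{2s}}\,dA(w)
\thickapprox_{z_0} N^{1-s},
\end{equation*}
the last step by the Beta asymptotics $\int_0^1\rho^{Np}(1-\rho)^{p-2+s}\,d\rho\thickapprox N^{-(p-1+s)}$ (convergent exactly because $p>1-s$), with constants depending on $z_0$, $p$, $s$ but not on $N$. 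Meanwhile $\sum_{z_k\in S(I)}(1-|z_k|)^s=N(1-|z_0|)^s$ for the natural box containing $z_0$. Since $1-s<1$, your target inequality fails for large $N$: the dependence of $\mu_\theta$ on the local zero count is genuinely nonlinear (power $1-s$, not $1$). Consequently no local or subharmonicity argument can yield the Carleson condition on the zeros box by box; any correct proof must exploit the global hypothesis $\sup_I\mu_\theta(S(I))/|I|^s<\infty$ to first control the clustering of zeros in pseudohyperbolic balls, which is exactly the step you delegate to an unspecified ``argument-principle count''. Two further pieces are also only asserted: extracting a uniformly separated subsequence proves the Carleson property for that subsequence only, with no passage back to the full zero sequence; and the exclusion of a nontrivial singular inner factor for $p>2$ is stated without proof. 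These items constitute the actual content of the hard case of the theorem in \cite{PR}.
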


The proof of Theorem \ref{mt1} will also use the Rademacher functions $\{r_j(t)\}_{j=0}^\infty$ defined by
$$
r_0(t)=\begin{cases}
1, & 0<t<\f{1}{2}, \\
-1, & \f{1}{2}<t<1,\\
0, & t=0,\ \f{1}{2},\ 1.
 \end{cases}
$$
$$
r_n(t)=r_0(2^nt),\ \ n=1,\ 2,\ \cdots.
$$
See \cite[Chapter V, Vol. I]{Zy} or \cite[Appendix A]{Du} for  properties of these functions. In particular, we will  use Khinchine's inequality.

\begin{otherth}\label{KI} (Khinchine's inequality). If $\{c_k\}_{k=1}^\infty\in \ell^2$, then the series $\sum_{k=1}^\infty c_kr_k(t)$ converges almost everywhere. Furthermore, for $0<p<\infty$ there exist positive constants $A_p$, $B_p$ such that for every sequence $\{c_k\}_{k=1}^\infty\in \ell^2$ we have
$$
A_p\left(\sum_{k=1}^\infty |c_k|^2\right)^{\f{p}{2}}
\leq \int_0^1\left|\sum_{k=1}^\infty c_kr_k(t)\right|^pdt
\leq B_p\left(\sum_{k=1}^\infty |c_k|^2\right)^{\f{p}{2}}.
$$
\end{otherth}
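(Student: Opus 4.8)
The plan is to treat the four assertions — almost everywhere convergence, the elementary $p=2$ identity, the upper bound, and the lower bound — separately, exploiting throughout the stochastic independence of the Rademacher functions, by which I mean that for distinct indices $k_1,\dots,k_m$ one has $\int_0^1 r_{k_1}(t)\cdots r_{k_m}(t)\,dt=\prod_j \int_0^1 r_{k_j}(t)\,dt$, together with $\int_0^1 r_k(t)\,dt=0$ and $r_k(t)^2=1$ a.e. Since $\int_0^1 r_jr_k\,dt=\delta_{jk}$, the partial sums $S_n=\sum_{k=1}^n c_kr_k$ satisfy $\|S_n-S_m\|_{L^2}^2=\sum_{k=m+1}^n|c_k|^2$, so $\{S_n\}$ is Cauchy and converges in $L^2(0,1)$ because $\{c_k\}\in\ell^2$. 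Almost everywhere convergence then follows from the fact that the $c_kr_k$ are independent and mean-zero with $\sum_k\int_0^1|c_kr_k|^2\,dt=\sum_k|c_k|^2<\infty$, so Kolmogorov's theorem on sums of independent random variables applies. The $p=2$ identity $\int_0^1|S|^2\,dt=\sum_k|c_k|^2$ is just orthonormality passed to the limit.

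Next I would prove the upper bound. Assuming first that the $c_k$ are real, for every $\lambda\in\R$ independence and the elementary inequality $\cosh x\le e^{x^2/2}$ give
$$
\int_0^1\exp\Big(\lambda\sum_k c_kr_k(t)\Big)\,dt=\prod_k\cosh(\lambda c_k)\le\exp\Big(\tfrac{\lambda^2}{2}\sum_k|c_k|^2\Big).
$$
Writing $\|c\|_2^2=\sum_k|c_k|^2$, this subgaussian estimate yields the tail bound $|\{t:|S(t)|>\sigma\}|\le 2\exp(-\sigma^2/(2\|c\|_2^2))$, and integrating $\int_0^1|S|^p\,dt=\int_0^\infty p\sigma^{p-1}|\{|S|>\sigma\}|\,d\sigma$ produces $\int_0^1|S|^p\,dt\le B_p\|c\|_2^p$ for every $0<p<\infty$. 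The complex case follows by splitting $c_k=a_k+ib_k$, using $|S|\le|\sum_k a_kr_k|+|\sum_k b_kr_k|$ together with $\|c\|_2^2=\|a\|_2^2+\|b\|_2^2$.

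Finally the lower bound, which is the only delicate point. For $p\ge2$ it is immediate: Hölder (or Jensen) gives $\|c\|_2=\|S\|_{L^2}\le\|S\|_{L^p}$, so $A_p=1$ suffices. For $0<p<2$ one cannot argue pointwise and must interpolate. Choosing any exponent $q>2$ (say $q=4$) and the unique $\theta\in(0,1)$ with $\frac12=\frac{\theta}{p}+\frac{1-\theta}{q}$, the log-convexity of $L^p$-norms gives $\|S\|_{L^2}\le\|S\|_{L^p}^{\theta}\,\|S\|_{L^q}^{1-\theta}$. Inserting the identity $\|S\|_{L^2}=\|c\|_2$ and the upper bound $\|S\|_{L^q}\le B_q^{1/q}\|c\|_2$ and solving for $\|S\|_{L^p}$ yields $\|S\|_{L^p}\ge A_p\|c\|_2$ with $A_p=B_q^{-(1-\theta)/(q\theta)}$. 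The main obstacle is precisely this reverse inequality for small $p$: the low moment is uncontrolled from below by elementary means, and the interpolation step is what lets one trade it against the controlled high moment $\|S\|_{L^q}$ to recover the $\ell^2$-norm.
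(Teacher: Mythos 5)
The paper does not prove this statement at all: Khinchine's inequality is quoted as a classical result (Theorem~B) with pointers to Zygmund's \emph{Trigonometric Series} and the appendix of Duren's \emph{Theory of $H^p$ Spaces}, so there is no in-paper argument to compare against. Your proof is correct and is essentially the standard modern one. The orthonormality computation for $L^2$-convergence and the $p=2$ identity are fine; a.e.\ convergence via Kolmogorov's one-series theorem for independent mean-zero summands with $\sum_k|c_k|^2<\infty$ is legitimate (one could equally invoke martingale convergence for the dyadic filtration, which keeps the argument self-contained in measure theory on $[0,1]$). The subgaussian route to the upper bound --- $\int_0^1 e^{\lambda S}\,dt=\prod_k\cosh(\lambda c_k)\le e^{\lambda^2\|c\|_2^2/2}$, Chernoff tail bound, integrate the distribution function --- is sound, with the optimal choice $\lambda=\sigma/\|c\|_2^2$ implicit; the only step you elide is that the exponential identity should be established for partial sums and passed to the limit (Fatou, or $L^2$ plus a.e.\ convergence), which is routine. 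Your handling of the complex case by splitting real and imaginary parts works for all $0<p<\infty$ provided you use $|x+y|^p\le 2^{p}(|x|^p+|y|^p)$ rather than the triangle inequality when $p<1$. The lower bound via log-convexity of $L^p$-norms, $\|S\|_{L^2}\le\|S\|_{L^p}^{\theta}\|S\|_{L^q}^{1-\theta}$ with $\frac12=\frac{\theta}{p}+\frac{1-\theta}{q}$, correctly trades the controlled high moment against the low one and yields the stated constant; this interpolation trick (sometimes done with $q=4$ and the explicit fourth-moment computation instead of the subgaussian bound) is exactly how the classical sources argue, so you have reconstructed the textbook proof rather than found a new one. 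For the purposes of this paper your proof is more than sufficient, since the authors only use the two-sided moment comparison as a black box in the proof of Lemma~3.1.
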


For $1<p<\infty$ and $0<s<1$, the following result shows that we can regard $\qpst$ as a Banach space of functions
modulo constants.

\begin{lemma} \label{L1}
Let $1<p<\infty$ and $0<s<1$. Then $\qpst \subseteq BMO(\T)$. Furthermore,   $\qpst$ is complete with respect to (1.1).
\end{lemma}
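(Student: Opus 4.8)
The plan is to establish the two assertions separately, using the same elementary oscillation estimate in both. For the inclusion $\qpst\subseteq BMO(\T)$, I would exploit the equivalent $L^p$-description of $BMO(\T)$ recorded in the introduction, namely $\|f\|^p_{BMO(\T)}\thickapprox \sup_I\f{1}{|I|}\int_I|f(\zeta)-f_I|^p|d\zeta|$. Fix an arc $I$. Writing $f(\zeta)-f_I=\f{1}{|I|}\int_I(f(\zeta)-f(\eta))|d\eta|$ and applying Jensen's inequality in $\eta$ gives $|f(\zeta)-f_I|^p\le \f{1}{|I|}\int_I|f(\zeta)-f(\eta)|^p|d\eta|$. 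The key geometric observation is that for $\zeta,\eta\in I$ the chord satisfies $|\zeta-\eta|\le|I|$ (the chord never exceeds the subtended arc length), so that $|f(\zeta)-f(\eta)|^p\le |I|^{2-s}\,\f{|f(\zeta)-f(\eta)|^p}{|\zeta-\eta|^{2-s}}$ since $2-s>0$. Combining these and using \eqref{Eq1-1} yields $\int_I|f(\zeta)-f_I|^p|d\zeta|\le |I|^{1-s}\int_I\int_I\f{|f(\zeta)-f(\eta)|^p}{|\zeta-\eta|^{2-s}}|d\zeta||d\eta|\le |I|\,\|f\|^p_{\qpst}$, hence $\f{1}{|I|}\int_I|f(\zeta)-f_I|^p|d\zeta|\le\|f\|^p_{\qpst}$. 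Taking the supremum over $I$ shows $\|f\|_{BMO(\T)}\lesssim\|f\|_{\qpst}$, proving the inclusion.

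A byproduct of this computation, applied to the single arc $I=\T$, is the Poincar\'e-type bound $\|f-f_\T\|_{L^p(\T)}\lesssim\|f\|_{\qpst}$, where $f_\T$ denotes the average of $f$ over $\T$. This is what turns the seminorm \eqref{Eq1-1} into a genuine norm on the quotient by constants, and it is the engine of the completeness argument.

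For completeness modulo constants, I would take a sequence $\{f_n\}$ that is Cauchy for \eqref{Eq1-1}, and normalize the representatives so that $(f_n)_\T=0$. By the Poincar\'e bound above, $\{f_n\}$ is then Cauchy in $L^p(\T)$, so it converges in $L^p(\T)$ to some $f$ with $f_\T=0$; passing to a subsequence I may assume in addition that $f_n\to f$ almost everywhere on $\T$. To see that $f\in\qpst$ and that the convergence takes place in \eqref{Eq1-1}, I would apply Fatou's lemma on each fixed arc $I$: along the subsequence the integrand $\f{|(f_m-f_n)(\zeta)-(f_m-f_n)(\eta)|^p}{|\zeta-\eta|^{2-s}}$ converges almost everywhere to $\f{|(f-f_n)(\zeta)-(f-f_n)(\eta)|^p}{|\zeta-\eta|^{2-s}}$, so that $\f{1}{|I|^s}\int_I\int_I\f{|(f-f_n)(\zeta)-(f-f_n)(\eta)|^p}{|\zeta-\eta|^{2-s}}|d\zeta||d\eta|\le\liminf_m\|f_m-f_n\|^p_{\qpst}\le\sup_{m\ge n}\|f_m-f_n\|^p_{\qpst}$, a bound independent of $I$. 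Taking the supremum over $I$ yields $\|f-f_n\|_{\qpst}\le\sup_{m\ge n}\|f_m-f_n\|_{\qpst}$, which tends to $0$ as $n\to\infty$ by the Cauchy property. In particular $f-f_n\in\qpst$ for large $n$, so $f\in\qpst$ and $f_n\to f$ in the seminorm \eqref{Eq1-1}.

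The routine parts are the inequalities; the only point requiring care is the completeness argument, and within it the main obstacle is controlling the sequence well enough to extract an almost everywhere convergent subsequence before invoking Fatou's lemma. This is exactly what the normalization $(f_n)_\T=0$ together with the Poincar\'e bound accomplishes, upgrading Cauchyness in the seminorm to Cauchyness in $L^p(\T)$; without that step the seminorm alone would not pin down a limiting function.
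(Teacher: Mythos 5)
Your proof is correct, and its overall strategy coincides with the paper's: both halves hinge on the elementary fact that $|\zeta-\eta|\le |I|$ for $\zeta,\eta\in I$, and both finish the completeness claim by extracting an a.e.\ convergent subsequence and applying Fatou's lemma arc by arc. The differences are in the intermediate spaces you route through. For the inclusion, the paper bounds the $L^1$ oscillation $\f{1}{|I|}\int_I|f-f_I|$ directly by H\"older against the $\qpst$ kernel, so it never needs the John--Nirenberg equivalence; you instead prove the stronger $L^p$ oscillation bound $\f{1}{|I|}\int_I|f-f_I|^p\le\|f\|^p_{\qpst}$ via Jensen and then quote the $L^p$ characterization of $BMO(\T)$ --- note you could skip that citation entirely, since your $L^p$ bound implies the $L^1$ bound by H\"older. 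For completeness, the paper deduces that a $\qpst$-Cauchy sequence is $BMO(\T)$-Cauchy and invokes completeness of $BMO(\T)$ to get a limit and an a.e.\ convergent subsequence (a step that tacitly requires fixing representatives modulo constants); you instead normalize $(f_n)_\T=0$, use the Poincar\'e-type bound $\|f-f_\T\|_{L^p(\T)}\lesssim\|f\|_{\qpst}$ to upgrade seminorm-Cauchyness to $L^p(\T)$-Cauchyness, and use completeness of $L^p$. Your route is more self-contained and makes the ``modulo constants'' issue explicit rather than burying it in the $BMO$ step; its only small technical debt is that the Poincar\'e bound takes $I=\T$ in \eqref{Eq1-1}, so one should either admit the full circle as an arc or exhaust $\T$ by arcs of length $2\pi-\varepsilon$ and pass to the limit, which is routine. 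Your final estimate $\|f-f_n\|_{\qpst}\le\sup_{m\ge n}\|f_m-f_n\|_{\qpst}$ also gives convergence of the full sequence directly, where the paper needs an extra triangle-inequality step to pass from the subsequence back to the whole sequence.
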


\begin{proof}
Let $f\in \qpst$. For any arc $I\subseteq \T$, it follows from H\"older's inequality that
\[
\begin{split}
\f{1}{|I|}\int_I|f(e^{it})-f_I|\,dt &
\leq \f{1}{|I|^2}\int_I\int_I|f(e^{it})-f(e^{i\theta})| \,d\theta \,dt
\\
&\leq \left(\f{1}{|I|^s}\int_I\int_I\f{|f(e^{it})-f(e^{i\theta})|^p}{|e^{it}-e^{i\theta}|^{2-s}}\,d\theta \, dt\right)^{1/p},
\end{split}
\]
because $|e^{it}-e^{i\theta}|\le |I|$ when $e^{it}$ and $e^{i\theta}$ are in $I$. Thus $\qpst \subseteq BMO(\T)$ with $\|f\|_{BMO(\T)}\lesssim  \|f\|_{\qpst}$.
Now let $\{f_m\}$ be a Cauchy sequence in $\qpst$. Then it is also a Cauchy sequence in $BMO(\T)$. Hence $f_m\rightarrow f$  in $BMO(\T)$ for some $f$ in $BMO(\T)$ and there exists a subsequence $\{f_{m_k}\}\subseteq \{f_m\}$ such that
$
\lim_{k\rightarrow\infty}f_{m_k}(e^{it})=f(e^{it}),$ for a.e.  $e^{it}\in I.$
 By Fatou's lemma, it follows easily that
$$
\|f_{m_k}-f\|_{\qpst}\leq\liminf_{l\rightarrow\infty}
\|f_{m_l}-f_{m_k}\|_{\qpst},
$$
 which implies that $f_{m_k}\rightarrow f$ in $\qpst$.
Since
 $$
 \|f_k-f\|_{\qpst}\leq\|f_{m_k}-f\|_{\qpst}+
 \|f_{m_k}-f_k\|_{\qpst},
 $$
 this finishes  the proof.
\end{proof}

\subsection{Characterizations of $\qpst$ spaces}\label{s3}

To prove our main results in this paper, some characterizations of  $\qpst$ spaces are necessary.
Given $f\in L^1(\T)$, let $\widehat{f}$ be the Poisson extension of $f$, that is,
$$
\widehat{f}(z)=\f{1}{2\pi}\int_0^{2\pi}f(e^{i\theta})\f{1-|z|^2}{|e^{i\theta}-z|^2}d\theta,\qquad z\in\D.
$$
We will characterize $\qpst$ spaces in terms of Carleson type measures. Before doing that, we state and prove some  auxiliary results.

\begin{lemma} \label{EL} Suppose  $p>1$ and $0<s<1$. Let $f\in L^p(\T)$ and let $F\in C^1(\D)$ with
$
\lim_{r\rightarrow1}F(re^{it})=f(e^{it})$  for  a.e. $e^{it}\in \T.$ For any arc $I\subset \T$, we have
\[
\int_{I}\int_{I} \frac{|f(e^{is})-f(e^{i\theta})|^p}{|e^{is}-e^{i\theta}|^{2-s}}\,d\theta\,ds \lesssim \int_{S(3I)} |\nabla F(z)|^p(1-|z|^2)^{p-2+s}dA(z).
\]
In particular, if $|\nabla F(z)|^p(1-|z|^2)^{p-2+s}dA(z)$ is an $s$--Carleson measure, then $f\in \qpst$.
\end{lemma}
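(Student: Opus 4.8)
The plan is to bound the pointwise difference $|f(e^{is})-f(e^{i\theta})|$ by the integral of $|\nabla F|$ along a short path joining the two boundary points through the disk, and then to integrate this estimate against the (non-integrable) kernel $|e^{is}-e^{i\theta}|^{s-2}$ using a weighted Hardy inequality together with Fubini's theorem. Fix an arc $I$; after a harmless reduction we may assume $I$ is small, so that $|e^{is}-e^{i\theta}|\asymp|s-\theta|$ for $e^{is},e^{i\theta}\in I$, and write $\delta=|s-\theta|$. For a fixed pair I would join $e^{is}$ and $e^{i\theta}$ by the path that runs radially inward from $e^{is}$ to $(1-c\delta)e^{is}$, then along the circular arc of radius $1-c\delta$ from angle $s$ to angle $\theta$, and finally radially out to $e^{i\theta}$; here $c$ is a small fixed constant chosen so that the whole path stays inside $S(3I)$. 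Since $F\in C^1(\D)$ and the radial limits defining $f$ exist a.e., the fundamental theorem of calculus along this path gives, for a.e.\ such pair,
$$
|f(e^{is})-f(e^{i\theta})|\le R_s+A+R_\theta,
$$
where $R_s=\int_{1-c\delta}^1|\nabla F(te^{is})|\,dt$, $R_\theta$ is the analogous radial integral at $e^{i\theta}$, and $A$ is the integral of $|\nabla F|$ along the connecting arc. By convexity $|f(e^{is})-f(e^{i\theta})|^p\lesssim R_s^p+A^p+R_\theta^p$, so it suffices to estimate the three corresponding contributions to the left-hand side separately.

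For the radial contribution of $R_s$ I would fix $s$ and integrate in $\theta$ first. Writing the radial segment in terms of the depth $u=1-t$ and setting $h_s(u)=|\nabla F((1-u)e^{is})|$, the inner integral becomes
$$
\int_I\frac{1}{|s-\theta|^{2-s}}\left(\int_0^{c|s-\theta|}h_s(u)\,du\right)^p d\theta\lesssim\int_0^{c|I|}\frac{1}{\delta^{2-s}}\left(\int_0^\delta h_s(u)\,du\right)^p d\delta.
$$
This is exactly a one-dimensional weighted Hardy inequality: with weight $w(\delta)=\delta^{s-2}$ on the averaged side and $v(\delta)=\delta^{p-2+s}$ on the other, the Muckenhoupt balance condition $\big(\int_\rho^\infty w\big)^{1/p}\big(\int_0^\rho v^{1-p'}\big)^{1/p'}\lesssim 1$ holds \emph{precisely} because $0<s<1$ (and $p>1$). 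Hence the displayed quantity is $\lesssim\int_0^{c|I|}h_s(\delta)^p\,\delta^{p-2+s}\,d\delta=\int_{1-c|I|}^1|\nabla F(te^{is})|^p(1-t)^{p-2+s}\,dt$, and integrating in $s\in I$ bounds it by $\int_{S(3I)}|\nabla F|^p(1-|z|^2)^{p-2+s}\,dA$. The contribution of $R_\theta$ is identical by symmetry.

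For the arc contribution I would first apply Jensen's inequality on the $\phi$-interval of length $\delta$ to get $A^p\lesssim\delta^{p-1}\int_{s\wedge\theta}^{s\vee\theta}|\nabla F((1-c\delta)e^{i\phi})|^p\,d\phi$, so that the kernel factor becomes $\delta^{p-3+s}$. Then Fubini in $(s,\theta,\phi)$, fixing the angle $\phi$ and passing to the variables $a=\phi-(s\wedge\theta)$, $b=(s\vee\theta)-\phi$ with $\delta=a+b$, collapses the two-dimensional $(a,b)$-integral of $\delta^{p-3+s}$ into $\int\delta^{p-2+s}\,d\delta$ (the extra factor $\delta$ being the measure of $\{a+b=\delta\}$); after the substitution $u=c\delta$ this is again the correct weight, giving $\lesssim\int_I\int_{1-c|I|}^1|\nabla F(te^{i\phi})|^p(1-t)^{p-2+s}\,dt\,d\phi$, controlled by the same Carleson integral over $S(3I)$. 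Combining the three contributions proves the main inequality, and the ``in particular'' assertion follows at once: if $\mu=|\nabla F|^p(1-|z|^2)^{p-2+s}\,dA$ is an $s$-Carleson measure then $\mu(S(3I))\lesssim|I|^s$, so dividing by $|I|^s$ and taking the supremum over $I$ gives $f\in\qpst$.

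I expect the main obstacle to be the radial term: the kernel $|\zeta-\eta|^{s-2}$ is non-integrable across the diagonal, which rules out any naive splitting of $|f(\zeta)-f(\eta)|$ into one-variable pieces and forces the local, depth-$\delta$ path construction; getting the exponents to close is exactly the content of the weighted Hardy inequality, and it is here that the hypothesis $s<1$ enters in an essential way. The remaining care is purely bookkeeping: choosing $c$ small enough to keep the connecting path inside $S(3I)$, and the routine chord-versus-arc comparison $|e^{is}-e^{i\theta}|\asymp|s-\theta|$.
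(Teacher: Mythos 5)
Your proposal is correct and takes essentially the same route as the paper: the identical radial--arc--radial path at depth comparable to the angular gap, followed by a Hardy inequality to absorb the singular kernel $\delta^{s-2}$ (your Muckenhoupt two-weight formulation is equivalent to the classical Hardy inequality the paper quotes from Stein, and the balance condition holding precisely when $0<s<1$ is the same point). The only differences are bookkeeping: the paper applies Minkowski's integral inequality to the $\theta$-integral before invoking Hardy, whereas you split pointwise by convexity and then use the weighted Hardy inequality for the radial terms and Jensen plus Fubini for the arc term --- the substance is identical.
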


\begin{proof} We use an argument used in \cite{NX} and \cite[p. 1294]{ABP}. After a change of variables, it is easy to see (it is done in the same way as in \cite{NX} or \cite[Chapter 7]{Xi3} where the case $p=2$ was obtained) that
$$
\int_{I}\int_{I} \frac{|f(e^{is})-f(e^{i\theta})|^p}{|e^{is}-e^{i\theta}|^{2-s}}\,d\theta\,ds \lesssim\int_0^{|I|} \f{1}{t^{2-s}}\left(\int_I |f(e^{i(\theta+t)})-f(e^{i\theta})|^p\,d\theta\right)\,dt.
$$
For any  arc $I$ with $|I|<\f{\pi}{3}$ and $t\in (0, |I|)$, set  $r=1-t$. Then
\[
\begin{split}
|f(e^{i(\theta+t)})&-f(e^{i\theta})|
\\
&\leq\big |f(e^{i(\theta+t)})-F(re^{i(\theta+t)})\big | +\big|F(re^{i(\theta+t)})-F(re^{i\theta})\big |+\big |F(re^{i\theta})-f(e^{i\theta})\big |
\\
&\leq\int_r^1|\nabla F(xe^{i(\theta+t)})|dx+\int_0^t|\nabla F(re^{i(\theta+u)})|du+\int_r^1|\nabla F(xe^{i\theta})|dx.
\end{split}
\]
Since $p>1$, we can use Minkowski's inequality to obtain
\[
\begin{split}
\int_I |f&(e^{i(\theta+t)})-f(e^{i\theta})|^pd\theta
\lesssim \left(\int_r^1 \left(\int_I    |\nabla F(xe^{i(\theta+t)})|^pd\theta\right)^{1/p}  dx\right)^p
\\
&\quad+
\left(\int_0^t \left(\int_I    |\nabla F(re^{i(\theta+u)})|^pd\theta\right)^{1/p}  du\right)^p
+\left(\int_r^1 \left(\int_I    |\nabla F(xe^{i\theta})|^pd\theta\right)^{1/p}  dx\right)^p
\\
&\lesssim \left(\int_r^1 \left(\int_{3I}    |\nabla F(xe^{i\theta})|^pd\theta\right)^{1/p}  dx\right)^p +
\left(\int_0^t \left(\int_I    |\nabla F(re^{i(\theta+u)})|^pd\theta\right)^{1/p}  du\right)^p\\
\\
&=(J_ 1)+(J_ 2).
\end{split}
\]
For  $p>1$ and  $0<s<1$, applying the Hardy inequality (see \cite[p. 272]{Stei}) gives
\begin{eqnarray*}
\int_0^{|I|} \f{1}{t^{2-s}}\,(J_ 1) \,dt&=&\int_0^{|I|} \f{1}{t^{2-s}}\left(\int_0^t \left(\int_{3I}    |\nabla F((1-y)e^{i\theta})|^pd\theta\right)^{1/p}  dy\right)^p dt
\\
&\leq&\left(\f{p}{1-s}\right)^p\int_0^{|I|}\left(\int_{3I} |\nabla F((1-y)e^{i\theta})|^pd\theta\right) y^{p-2+s}dy\\
&=&\left(\f{p}{1-s}\right)^p\int_{1-|I|}^1\left(\int_{3I} |\nabla F(xe^{i\theta})|^pd\theta\right) (1-x)^{p-2+s}dx\\
&\lesssim& \int_{S(3I)} |\nabla F(z)|^p(1-|z|)^{p-2+s}dA(z).
\end{eqnarray*}
We also have
\begin{eqnarray*}
\int_0^{|I|} \f{1}{t^{2-s}}\,(J_ 2) \,dt
&\leq&\int_0^{|I|} \f{1}{t^{2-s}}\left(\int_0^t \left(\int_{3I}    |\nabla F(re^{i\theta})|^pd\theta\right)^{1/p}  du\right)^p dt
\\
&=&\int_0^{|I|} t^{p-2+s}\left(\int_{3I}    |\nabla F((1-t)e^{i\theta})|^pd\theta\right) dt
\\
&\lesssim& \int_{S(3I)} |\nabla F(z)|^p(1-|z|)^{p-2+s}dA(z).
\end{eqnarray*}
Combining  the above estimates, we obtain the desired result.
\end{proof}

We also need the following result which is a generalization of    D. Stegenga's  estimate in \cite{Steg2}.

\begin{lemma} \label{St-L}
For $p>1$ and $0<s<1$, let $f\in L^p(\T)$ and let $I$, $J$ be two arcs on $\T$ centered at $e^{i\theta_0}$ with $|J|\geq 3|I|$.  Then there exists a constant $C$ depending only on $p$ and $s$ such that
\begin{eqnarray*}
&~&\int_{S(I)}|\nabla\widehat{f}(z)|^p(1-|z|)^{p-2+s}dA(z)\\
&\leq& C \left[\int_J\int_J\f{|f(e^{i\theta})-f(e^{it})|^p}{|e^{i\theta}-e^{it}|^{2-s}}d\theta dt+|I|^{p+s}\left(\int_{|t|\geq \f{1}{3}|J|}|f(e^{i(t+\theta_0)})-f_J|\f{dt}{t^2}\right)^p\right].
\end{eqnarray*}
\end{lemma}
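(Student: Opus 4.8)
The plan is to estimate the weighted gradient integral of the Poisson extension $\widehat{f}$ over the small Carleson box $S(I)$ by exploiting the fact that the Poisson kernel and its gradient decay rapidly away from the boundary arc beneath $S(I)$. The natural first move is to subtract a constant: since $\widehat{f} - c$ has the same gradient as $\widehat{f}$ for any constant $c$, I would replace $f$ by $f - f_J$ (where $f_J$ is the average of $f$ over $J$), so that we may work with a function of mean zero on $J$. Then, for a point $z \in S(I)$, I would write the gradient $\nabla \widehat{f}(z) = \nabla \widehat{f-f_J}(z)$ as an integral against the gradient of the Poisson kernel, and split the integration over $\T$ into the ``local'' part $J$ (the arc concentric with $I$ of comparable or slightly larger size, satisfying $|J|\geq 3|I|$) and the ``global'' part $\T \setminus J$.

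For the \emph{local part}, the contribution is controlled by $\int_J |f(e^{i\theta}) - f_J|^p$-type quantities weighted by the Poisson kernel gradient; carrying the weight $(1-|z|)^{p-2+s}$ through and integrating over $S(I)$, one expects (after an application of H\"older's inequality to pass from the averaged kernel back to a genuine $p$-th power, and Fubini to exchange the order of integration) to recover precisely the double integral
$$
\int_J\int_J \f{|f(e^{i\theta})-f(e^{it})|^p}{|e^{i\theta}-e^{it}|^{2-s}}\,d\theta\,dt,
$$
which is the first term on the right-hand side. The cleanest route here is to compare $\widehat{f}$ to a suitable $C^1$ extension and invoke the mechanism already developed in Lemma \ref{EL}, or to estimate the Poisson-kernel-gradient integral directly using the standard bound $|\nabla P(z, e^{i\theta})| \lesssim (1-|z|)^{-1} |e^{i\theta} - z|^{-1}$ and the geometry of $S(I)$.

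For the \emph{global part}, where $e^{i\theta} \in \T \setminus J$ so that $|e^{i\theta} - e^{i\theta_0}| \gtrsim |J|$ while $z \in S(I)$ is close to $e^{i\theta_0}$, the kernel gradient is essentially constant in $z$ over all of $S(I)$ and behaves like $|t + \theta_0|^{-2}$ where $t$ parametrizes distance from the center. This is exactly where the tail term
$$
|I|^{p+s}\left(\int_{|t|\geq \f{1}{3}|J|}|f(e^{i(t+\theta_0)})-f_J|\,\f{dt}{t^2}\right)^p
$$
comes from: the factor $|I|^{p+s}$ arises by estimating $|\nabla \widehat{f-f_J}(z)|$ by the displayed tail integral (uniformly for $z \in S(I)$) and then integrating the weight $(1-|z|)^{p-2+s}$ over $S(I)$, which contributes a factor comparable to $|I|^{p+s}$ (the $|I|$ from the area element in the radial variable raised to the power accounting for $p-2+s$, plus the arc length $|I|$). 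The mean-zero normalization is what makes this tail integral converge and keeps the estimate meaningful.

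\textbf{The main obstacle} I anticipate is the bookkeeping in the local term: one must carefully pass from the Poisson integral representation of the gradient to a genuine double difference $|f(e^{i\theta}) - f(e^{it})|^p$ while correctly tracking the homogeneity in $|I|$ and the singular weight $|e^{i\theta} - e^{it}|^{s-2}$. The delicate point is that H\"older's inequality must be applied against the probability-measure normalization of the Poisson kernel so that no spurious powers of $|I|$ are introduced, and the enlargement from $I$ to the concentric arc $J$ (with the hypothesis $|J| \geq 3|I|$) must be used to guarantee that the local region genuinely dominates the near-boundary behavior. Since Lemma \ref{EL} already furnishes a comparison of the double integral to a Carleson-box gradient integral for an \emph{arbitrary} $C^1$ extension with the correct boundary values, the strategy of reducing to that lemma—by splitting off the global tail first and then applying Lemma \ref{EL} to the localized piece—should make the local estimate essentially automatic, leaving the tail estimate as the only genuinely new computation.
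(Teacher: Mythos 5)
Your tail estimate is correct and is exactly what the paper does: for $z\in S(I)$ the Poisson-gradient kernel against the part of $f-f_J$ living far from $I$ is dominated by $\int_{|t|\ge \f13 |J|}|f(e^{i(t+\theta_0)})-f_J|\,\f{dt}{t^2}$ uniformly in $z$, and $\int_{S(I)}(1-|z|)^{p-2+s}\,dA(z)\thickapprox |I|^{p+s}$. The gap is in the local part, and it is a gap of \emph{direction}: Lemma \ref{EL} bounds the boundary double integral \emph{by} the Carleson-box gradient integral, whereas here you must bound the gradient integral \emph{by} the boundary double integral. So ``splitting off the tail and applying Lemma \ref{EL} to the localized piece'' proves an inequality pointing the wrong way and cannot close the argument. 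The paper instead invokes the reverse-direction estimate, Proposition 4.2 of \cite{ABP} (the same result underlying \eqref{Eq-C1}), applied over the whole disk to a localized version of $f$; that proposition is a nontrivial external ingredient, not a consequence of Lemma \ref{EL}. Your fallback ``direct'' route also fails as described: applying H\"older against the Poisson-gradient kernel turns the estimate into single differences $|f(e^{i\theta})-f_J|^p$, and the resulting kernel factor $\int_{S(I)}(1-|z|)^{s-1}|e^{i\theta}-z|^{-2}\,dA(z)$ diverges in the radial direction when $s<1$. The double-difference (oscillation) structure must be preserved all the way through, which is precisely what the cited proposition accomplishes and what H\"older-plus-Fubini destroys.

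There is a second, related problem: you localize by the sharp cutoff $\chi_J$, but the paper (following Stegenga) uses a smooth cutoff $\phi$ with $\phi\equiv 1$ on $\f13 J$, $\mathrm{supp}\,\phi\subseteq \f23 J$ and $|\phi(e^{i\theta})-\phi(e^{it})|\lesssim |e^{i\theta}-e^{it}|/|J|$, and decomposes $f=(f-f_J)\phi+(f-f_J)(1-\phi)+f_J$. This matters when you apply the global equivalence to the localized piece $f_1$: the double integral of $f_1$ over $\T\times\T$ contains cross terms with one variable inside and one outside the support, and with the smooth cutoff these terms keep $|\zeta-\eta|\gtrsim |J|$, so they reduce (via H\"older) to $\f{1}{|J|^{1-s}}\int_J|f-f_J|^p$, which is dominated by the double integral over $J$. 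With your sharp cutoff the cross terms instead carry the singular weight $\mathrm{dist}(\zeta,\partial J)^{s-1}$, and controlling $\int_J |f-f_J|^p\,\mathrm{dist}(\zeta,\partial J)^{s-1}\,|d\zeta|$ by the right-hand side requires a fractional Hardy-type inequality --- an extra ingredient your sketch neither names nor proves. So while your overall architecture (local piece plus far tail) agrees with the paper's, both of the concrete mechanisms you propose for the local estimate break down, and the two devices that actually make it work (the smooth Lipschitz cutoff and the reverse-direction comparison of \cite{ABP}) are missing.
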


\begin{proof} Without loss of generality, one  may assume  that $\theta_0=0$. Following Stegenga, we let $\phi$ be a function with $0\leq \phi\leq 1$ such that $\phi=1$ on $\f{1}{3} J$, $\text{supp}\ \phi \subseteq \f{2}{3} J$, and
\begin{equation}\label{Ephi}
|\phi(e^{i\theta})-\phi(e^{it})|\lesssim\f{|e^{i\theta}-e^{it}|}{|J|}
\end{equation}
for all $\theta$, $t\in[0,\ 2\pi)$.  Now we write
$$
f=(f-f_J)\phi+(f-f_J)(1-\phi)+f_J=f_1+f_2+f_3.
$$
Since $f_3$ is constant, we have $\nabla\widehat{f_3}=0$.
For $z=re^{i\theta}$ in the Carleson box $S(I)$,
$$
|\nabla\widehat{f_2}(z)|
\lesssim\int_0^{2\pi} \f{|f_2(e^{it})|}{(1-r)^2+(\theta-t)^2}dt
\lesssim\int_{|t|\geq \f{1}{3}|J|} |f(e^{it})-f_J|\f{dt}{t^2}
$$
and hence
$$
\int_{S(I)}|\nabla\widehat{f_2}(z)|^p(1-|z|)^{p-2+s}dA(z)
\lesssim |I|^{p+s}\left(\int_{|t|\geq \f{1}{3}|J|}|f(e^{it})-f_J|\f{dt}{t^2}\right)^p.
$$
For the integral over $S(I)$ of $|\nabla\widehat{f_1}|^p$, replacing $S(I)$ with the unit  disc $\D$ and using Proposition 4.2 in \cite{ABP}, we obtain
\[
\begin{split}
\int_{S(I)}|&\nabla\widehat{f_1}(z)|^p(1-|z|)^{p-2+s}dA(z)
\\
&\lesssim \int_\T\int_\T\f{|f_1(e^{i\theta})-f_1(e^{it})|^p}{|e^{i\theta}-e^{it}|^{2-s}}d\theta dt
\\
&\thickapprox \int_{e^{i\theta}\in J}\int_{e^{it}\in J}\f{|f_1(e^{i\theta})-f_1(e^{it})|^p}{|e^{i\theta}-e^{it}|^{2-s}}d\theta dt
+\int_{e^{i\theta}\not\in J}\int_{e^{it}\in \f{2}{3}J}\f{|f_1(e^{i\theta})-f_1(e^{it})|^p}{|e^{i\theta}-e^{it}|^{2-s}}d\theta dt
\\
&\quad +\int_{e^{it}\not\in J}\int_{e^{i\theta}\in \f{2}{3}J}\f{|f_1(e^{i\theta})-f_1(e^{it})|^p}{|e^{i\theta}-e^{it}|^{2-s}}d\theta dt
\,\thickapprox \, T_1+T_2+T_3.
\end{split}
\]
For estimating $T_1$, we see  first that, due to the condition \eqref{Ephi}, for $e^{i\theta},\ e^{it}\in\T$,
$$
|f_1(e^{i\theta})-f_1(e^{it})|\lesssim |f(e^{i\theta})-f(e^{it})|+|J|^{-1}|e^{i\theta}-e^{it}||f(e^{it})-f_J|.
$$
By H\"older's inequality, we  deduce that
\begin{eqnarray*}
\f{1}{|J|^p}\int_J\int_J\f{|f(e^{it})-f_J|^p}{|e^{i\theta}-e^{it}|^{2-s-p}}d\theta dt
&=&\f{1}{|J|^p}\int_J|f(e^{it})-f_J|^p\left(\int_J|e^{i\theta}-e^{it}|^{p-2+s}d\theta\right) dt
\\
&\lesssim&\f{1}{|J|^{1-s}}\int_J|f(e^{it})-f_J|^pdt\\
&\lesssim& \f{1}{|J|^{2-s}}\int_J\int_J|f(e^{i\theta})-f(e^{it})|^p d\theta dt\\
&\lesssim&\int_J\int_J\f{|f(e^{i\theta})-f(e^{it})|^p}{|e^{i\theta}-e^{it}|^{2-s}}d\theta dt.
\end{eqnarray*}
Thus,
$$
T_1 \lesssim \int_J\int_J\f{|f(e^{i\theta})-f(e^{it})|^p}{|e^{i\theta}-e^{it}|^{2-s}}d\theta dt.
$$
For $T_2$,  using that $f_1(e^{i\theta})=0$ for $e^{i\theta}\not\in J$, one gets
\[
\begin{split}
T_2&=\int_{e^{i\theta}\not\in J}\int_{e^{it}\in \f{2}{3}J}\f{|f_1(e^{i\theta})-f_1(e^{it})|^p}{|e^{i\theta}-e^{it}|^{2-s}}d\theta dt
\leq\int_{e^{i\theta}\not\in J}\int_{e^{it}\in \f{2}{3}J}\f{|f(e^{it})-f_J|^p}{|e^{i\theta}-e^{it}|^{2-s}}d\theta dt
\\
&\lesssim \f{1}{|J|^{1-s}}\int_J|f(e^{it})-f_J|^pdt
\lesssim \int_J\int_J\f{|f(e^{i\theta})-f(e^{it})|^p}{|e^{i\theta}-e^{it}|^{2-s}}d\theta dt.
\end{split}
\]
The estimate of $T_3$ is similar to $T_2$.
The above inequalities implies the lemma.
\end{proof}

The following theorem characterizes $\qpst$ spaces in terms of Carleson type measures. It  generalizes the corresponding result of $\qst$ spaces in \cite{NX}.

\begin{thm}\label{t3} Let $p>1$ and $0<s<1$. Suppose $f\in L^p(\T)$. The following conditions are equivalent.
\begin{enumerate}
\item [(a)] $f\in \qpst$.
\item[(b)] $|\nabla \widehat f(z)|^p(1-|z|^2)^{p-2+s}dA(z)$ is an $s$-Carleson measure.
\item[(c)]
$$
\sup_{a\in\D}\int_\T\int_\T\f{|f(\zeta)-f(\eta)|^p}{|\zeta-\eta|^{2-s}}\left(\f{1-|a|^2}{|\zeta-a||\eta-a|}\right)^s|d\zeta||d\eta|<\infty.
$$
\end{enumerate}
\end{thm}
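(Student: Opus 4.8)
The plan is to prove the equivalence in two stages: first (a) $\Leftrightarrow$ (b) using Lemmas \ref{EL} and \ref{St-L}, and then (b) $\Leftrightarrow$ (c) through a M\"obius change of variables combined with the description of $s$-Carleson measures recalled in Section~2. The implication (b) $\Rightarrow$ (a) is immediate: the Poisson extension $\widehat f$ is harmonic, hence of class $C^1(\D)$, and by Fatou's theorem it has radial limits equal to $f$ a.e. on $\T$, so Lemma \ref{EL} applied with $F=\widehat f$ gives $f\in\qpst$ as soon as the measure in (b) is $s$-Carleson. For the converse (a) $\Rightarrow$ (b), I fix an arc $I\subseteq\T$ centered at $e^{i\theta_0}$ and apply Lemma \ref{St-L} with $J=3I$, which satisfies the hypothesis $|J|\geq 3|I|$. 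The first term on the right-hand side is controlled at once by the definition of the $\qpst$ norm, since $\int_{3I}\int_{3I}\f{|f(\zeta)-f(\eta)|^p}{|\zeta-\eta|^{2-s}}\,|d\zeta||d\eta|\leq |3I|^s\,\|f\|_{\qpst}^p\lesssim |I|^s\,\|f\|_{\qpst}^p$.

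The real work lies in the second (tail) term of Lemma \ref{St-L}, namely $|I|^{p+s}\big(\int_{|t|\geq |I|}|f(e^{i(t+\theta_0)})-f_J|\,t^{-2}\,dt\big)^p$, where I have used $\f{1}{3}|J|=|I|$. Here I would decompose the tail integral dyadically over the annular pieces $2^n|I|\leq |t|<2^{n+1}|I|$, $n\geq 0$. On such a piece $t^{-2}\thickapprox (2^n|I|)^{-2}$ and the relevant boundary values $f(e^{i(t+\theta_0)})$ live on the arc $K_n=2^{n+2}J$, with $|K_n|\thickapprox 2^n|I|$. Since $f\in\qpst\subseteq BMO(\T)$ by Lemma \ref{L1}, the standard bound $|K_n|^{-1}\int_{K_n}|f-f_{K_n}|\lesssim \|f\|_{\qpst}$ together with the telescoping estimate $|f_{K_n}-f_J|\lesssim (n+1)\|f\|_{\qpst}$ yields $\int_{2^n|I|\leq|t|<2^{n+1}|I|}|f(e^{i(t+\theta_0)})-f_J|\,t^{-2}\,dt\lesssim (n+1)2^{-n}|I|^{-1}\|f\|_{\qpst}$. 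Summing the convergent series $\sum_n (n+1)2^{-n}$ gives $\int_{|t|\geq|I|}|f(e^{i(t+\theta_0)})-f_J|\,t^{-2}\,dt\lesssim |I|^{-1}\|f\|_{\qpst}$, so the tail term is $\lesssim |I|^s\|f\|_{\qpst}^p$. Combining the two estimates, $\int_{S(I)}|\nabla\widehat f(z)|^p(1-|z|)^{p-2+s}\,dA(z)\lesssim |I|^s\|f\|_{\qpst}^p$ uniformly in $I$, which is exactly (b).

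For (b) $\Leftrightarrow$ (c) the key device is the substitution $\zeta=\sigma_a(u)$, $\eta=\sigma_a(v)$ with $u,v\in\T$. Using the identities $|\sigma_a(u)-\sigma_a(v)|=\f{(1-|a|^2)|u-v|}{|1-\bar a u||1-\bar a v|}$, $\f{1-|a|^2}{|\sigma_a(u)-a|}=|1-\bar a u|$, and $|d\zeta|=\f{1-|a|^2}{|1-\bar a u|^2}|du|$, a direct bookkeeping of the powers of $1-|a|^2$, $|1-\bar a u|$ and $|1-\bar a v|$ shows that every weight cancels, so the double integral in (c) equals $\int_\T\int_\T\f{|(f\circ\sigma_a)(u)-(f\circ\sigma_a)(v)|^p}{|u-v|^{2-s}}|du||dv|$. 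By the global estimate of Proposition 4.2 in \cite{ABP} (already invoked in the proof of Lemma \ref{St-L}), this is comparable to $\int_\D|\nabla\widehat{f\circ\sigma_a}(z)|^p(1-|z|^2)^{p-2+s}\,dA(z)$. Now $\widehat{f\circ\sigma_a}=\widehat f\circ\sigma_a$, since both functions are harmonic on $\D$ with the same boundary values, and $|\nabla(\widehat f\circ\sigma_a)(z)|=|\nabla\widehat f(\sigma_a(z))|\,|\sigma_a'(z)|$ because $\sigma_a$ is conformal. A final change of variables $w=\sigma_a(z)$, together with $1-|\sigma_a(z)|^2=|\sigma_a'(z)|(1-|z|^2)$, transforms the last integral into $\int_\D|\nabla\widehat f(w)|^p(1-|w|^2)^{p-2+s}\big(\f{1-|a|^2}{|1-\bar a w|^2}\big)^s\,dA(w)$. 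Taking the supremum over $a\in\D$ and invoking the characterization of $s$-Carleson measures recalled in Section~2 (from \cite{AXZ}) identifies the finiteness of (c) with condition (b), closing the equivalence.

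I expect the \emph{tail estimate} in the step (a) $\Rightarrow$ (b) to be the main obstacle, as it is where the purely local $\qpst$ information must be summed against the long-range kernel $t^{-2}$ and where the $BMO$ telescoping has to be carried out quantitatively; by contrast, (b) $\Rightarrow$ (a) is an immediate application of Lemma \ref{EL}, and (b) $\Leftrightarrow$ (c) reduces, once the identities above are in place, to a careful but essentially mechanical change of variables.
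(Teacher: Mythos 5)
Your proposal is correct and follows essentially the same route as the paper: (b)~$\Rightarrow$~(a) via Lemma \ref{EL}, (a)~$\Rightarrow$~(b) via Lemma \ref{St-L} with $J=3I$ plus the $BMO$ bound on the tail integral (which the paper simply cites from \cite[p.~71]{Xi3}, whereas you reprove it by the standard dyadic telescoping), and (b)~$\Leftrightarrow$~(c) via Proposition 4.2 of \cite{ABP}, the identity $\widehat{f\circ\sigma_a}=\widehat f\circ\sigma_a$, a M\"obius change of variables, and the \cite{AXZ} characterization of $s$-Carleson measures. The only cosmetic difference is that you perform the change of variables on the boundary double integral of (c) first and then pass to the disk, while the paper substitutes $f\circ\sigma_a$ into \eqref{Eq-C1} and changes variables afterwards; the computations are identical.
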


\begin{proof} We first show that  (b) is equivalent to (c). By Proposition 4.2 in \cite{ABP}, one gets
\begin{equation}\label{Eq-C1}
\int_{\D}|\nabla \widehat f(z)|^p(1-|z|^2)^{p-2+s}dA(z)\thickapprox \int_0^{2\pi}\int_0^{2\pi}\f{|f(e^{i\theta})-f(e^{it})|^p}{|e^{i\theta}-e^{it}|^{2-s}}d\theta dt
\end{equation}
for all $f\in L^p(\T)$. Note that $\widehat{f\circ \sigma_a}=\widehat{f}\circ \sigma_a$ for any $a\in \D$.
Replacing $f$ by $f\circ \sigma_a$  in the above formula and making a change of variables, we get
\[
\begin{split}
\int_{\D} &|\nabla \widehat f(z)|^p (1-|z|^2)^{p-2+s}\left(\f{1-|a|^2}{|1-\bar{a}z|^2}\right)^s \, dA(z)
\\
&\thickapprox\int_0^{2\pi}\int_0^{2\pi}\f{|f(e^{i\theta})-f(e^{it})|^p}{|e^{i\theta}-e^{it}|^{2-s}} \left(\f{1-|a|^2}{|1-\overline{a}e^{i\theta}|
|1-\overline{a}e^{it}|}\right)^s\,d\theta \, dt.
\end{split}
\]
This  gives that  (b) is equivalent to (c).

By Lemma \ref{EL}, we see that (b) implies (a). Next we  verify that (a) implies (b). Let  $f\in \qpst$. Then $f$ is also in $BMO(\T)$  by Lemma \ref{L1}. For an arc $I$ centered at $e^{i\theta_0}$, let $J=3I$.  Then Lemma \ref{St-L} gives
\begin{eqnarray*}
&~&\int_{S(I)}|\nabla\widehat{f}(z)|^p(1-|z|)^{p-2+s}dA(z)\\
&\lesssim& \int_J\int_J\f{|f(e^{i\theta})-f(e^{it})|^p}{|e^{i\theta}-e^{it}|^{2-s}}d\theta dt+|I|^{p+s}\left(\int_{|t|\geq \f{1}{3}|J|}|f(e^{i(t+\theta_0)})-f_J|\f{dt}{t^2}\right)^p.
\end{eqnarray*}
Since, by \cite[p. 71]{Xi3},
$$
|J|\int_{|t|\geq \f{1}{3}|J|}|f(e^{i(t+\theta_0)})-f_J|\f{dt}{t^2}\lesssim \|f\|_{BMO(\T)},
$$
we have
$$
\sup_{I\subseteq \T}\f{1}{|I|^s}\int_{S(I)}|\nabla\widehat{f}(z)|^p(1-|z|)^{p-2+s}dA(z)\lesssim \|f\|_{\qpst}^p+\|f\|_{BMO(\T)}^p.
$$
The proof is complete.
\end{proof}
\vskip 3mm
If $p>1$ and $0<s<1$, then  $\qpsd\subseteq BMOA\subseteq H^2$. Thus functions in $\qpsd$ have boundary values.
As noticed before, an analytic function $f$ belongs to $\qpsd$ if and only if $|f'(z)|^p(1-|z|^2)^{p-2+s}dA(z)$ is an $s$-Carleson measure.
Combining this with Theorem \ref{t3}, one gets the following result immediately.

\begin{cor}\label{C1}
 Let $p>1$ and $0<s<1$. Suppose $f\in H^1$. Then $f\in \qpsd$ if and only if $f\in \qpst$.
\end{cor}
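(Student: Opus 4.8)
The plan is to reduce everything to the two Carleson-measure descriptions already in hand: the one identifying $\qpsd$ (namely $g\in\qpsd$ iff $|g'(z)|^p(1-|z|^2)^{p-2+s}\,dA(z)$ is an $s$-Carleson measure, recorded in the preliminaries) and condition (b) of Theorem \ref{t3}. The bridge between them is that, for an analytic function, the Poisson extension and the complex derivative interact trivially. Concretely, I would first record two elementary facts: (i) if $f\in H^1$, then its Poisson extension recovers the analytic function, $\widehat{f}=f$ on $\D$; and (ii) by the Cauchy--Riemann equations $|\nabla f(z)|=\sqrt{2}\,|f'(z)|$, so that $|\nabla\widehat{f}|^p(1-|z|^2)^{p-2+s}\,dA$ and $|f'|^p(1-|z|^2)^{p-2+s}\,dA$ differ only by the constant $2^{p/2}$ and are therefore simultaneously $s$-Carleson measures (or simultaneously not).

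For the forward implication, I would assume $f\in\qpsd$. Since $\qpsd\subseteq BMOA\subseteq H^2\subseteq H^1$, and the boundary values of a $BMOA$ function belong to $L^p(\T)$ for every finite $p$ by the John--Nirenberg inequality, the boundary function $f$ lies in $L^p(\T)$; hence Theorem \ref{t3} is applicable to it. The Carleson-measure description of $\qpsd$ together with facts (i)--(ii) shows that condition (b) of Theorem \ref{t3} holds, whence $f\in\qpst$.

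For the reverse implication, I would assume $f\in\qpst$; then $f\in L^p(\T)$ directly from the definition \eqref{Eq1-1}, and the standing hypothesis $f\in H^1$ gives $\widehat{f}=f$ on $\D$ by fact (i). The implication (a)$\Rightarrow$(b) of Theorem \ref{t3} then says that $|\nabla\widehat{f}|^p(1-|z|^2)^{p-2+s}\,dA$ is an $s$-Carleson measure; by (ii) the same is true of $|f'|^p(1-|z|^2)^{p-2+s}\,dA$, which is exactly the statement $f\in\qpsd$.

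There is essentially no genuine obstacle once Theorem \ref{t3} and the Carleson-measure characterization of $\qpsd$ are granted: the corollary is just the observation that for analytic $f$ the two defining measures coincide up to a multiplicative constant. The only points needing a word of justification are the identity $\widehat{f}=f$ (the Poisson representation of $H^1$ functions) and the membership $f|_{\T}\in L^p(\T)$ for $f\in\qpsd$, which is what allows Theorem \ref{t3} to be invoked in the forward direction; both are standard and I expect each to cost no more than a sentence.
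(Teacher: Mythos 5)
Your proposal is correct and takes essentially the same route as the paper: the paper likewise obtains the corollary by combining the $s$-Carleson-measure characterization of $\qpsd$ (noted before the corollary) with Theorem \ref{t3}, the identifications $\widehat{f}=f$ for $f\in H^1$ and $|\nabla f|\thickapprox |f'|$ for analytic $f$ being left implicit there. Your write-up merely makes these two bridging facts explicit, which is a harmless elaboration rather than a different argument.
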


\noindent{\bf {\sl Remark 1}.}\ \   Let $p>1$ and $0<s<1$.  We say that $f\in L_s^p$ if $f\in L^p(\T)$ and
$$
\|f\|_{L_s^p}^p=\int_0^{2\pi}\int_0^{2\pi}\f{|f(e^{it})-f(e^{i\theta})|^p}{|e^{it}-e^{i\theta}|^{2-s}}d\theta dt<\infty.
$$
The condition (c) of Theorem \ref{t3} gives that $f\in \qpst$ if and only if
$$
\sup_{a\in \D}\|f\circ \sigma_a\|_{L_s^p}<\infty.
$$
Thus if we set
$$
|||f|||_{\qpst}=\sup_{a\in \D}\|f\circ \sigma_a\|_{L_s^p},
$$
then $\qpst$ is a M\"obius invariant space in the sense of that
$$
|||f|||_{\qpst}=|||f\circ \sigma_a|||_{\qpst}
$$
for any $f\in \qpst$ and $a\in \D$.

\subsection{Inclusion relations}

Applying  Theorem \ref{t3} and Corollary \ref{C1},  we  can obtain a complete picture on the inclusion relations between different $\qpst$ spaces.

\begin{thm}\label{inclusions} Let $1<p_1, p_2<\infty$ and $0<s, r<1$.
\begin{enumerate}
\item [(1)] If $p_1\leq p_2$, then $\Q_s^{p_1}(\T)\subseteq \Q_r^{p_2}(\T)$ if and only if $s\leq r$.
\item [(2)]If $p_1>p_2$, then $\Q_s^{p_1}(\T)\subseteq \Q_r^{p_2}(\T)$ if and only if $\f{1-s}{p_1}> \f{1-r}{p_2}$.
\end{enumerate}
\end{thm}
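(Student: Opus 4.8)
The plan is to read the two inclusions entirely in the Carleson-measure language of Theorem \ref{t3}, combined with the identification of boundary and interior spaces in Corollary \ref{C1}. Thus ``$f\in\Q_s^{p_1}(\T)$'' becomes the statement that $d\mu_s=|\nabla\widehat f(z)|^{p_1}(1-|z|^2)^{p_1-2+s}\,dA(z)$ is an $s$-Carleson measure, and similarly $d\mu_r=|\nabla\widehat f(z)|^{p_2}(1-|z|^2)^{p_2-2+r}\,dA(z)$ being $r$-Carleson encodes $\Q_r^{p_2}(\T)$. For the test functions in the necessity parts I will take $f$ analytic and in $H^1$, so that $|\nabla\widehat f|=|f'|$ and Corollary \ref{C1} applies. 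Both sufficiency statements are then obtained by estimating $\mu(S(I))$ on an arbitrary Carleson box, while the two necessity statements require two genuinely different families of examples.

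For the sufficiency in (1) I would first record that $\Q_s^{p_1}(\T)\subseteq BMO(\T)$ (Lemma \ref{L1}), so the Poisson extension is a Bloch function, $(1-|z|^2)|\nabla\widehat f(z)|\lesssim \|f\|_{BMO(\T)}$. When $p_1\le p_2$ this lets me trade the extra power of the gradient for a negative power of $1-|z|^2$,
\[
|\nabla\widehat f(z)|^{p_2}(1-|z|^2)^{p_2-2+r}\lesssim \|f\|_{BMO(\T)}^{p_2-p_1}\,|\nabla\widehat f(z)|^{p_1}(1-|z|^2)^{p_1-2+r},
\]
reducing matters to $p_1=p_2$; there, since $s\le r$ and $1-|z|^2\lesssim |I|$ on $S(I)$, the factor $(1-|z|^2)^{r-s}\lesssim |I|^{r-s}$ converts the $s$-Carleson bound into the $r$-Carleson bound. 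For the sufficiency in (2), with $p_1>p_2$, I would apply H\"older's inequality on $S(I)$ with exponents $p_1/p_2$ and $p_1/(p_1-p_2)$, splitting off $|\nabla\widehat f|^{p_2}(1-|z|^2)^{\frac{p_2}{p_1}(p_1-2+s)}$. The first factor is controlled by $\big(\mu_s(S(I))\big)^{p_2/p_1}\lesssim |I|^{sp_2/p_1}$, and the second reduces to $\int_{S(I)}(1-|z|^2)^{\gamma}\,dA(z)\approx |I|^{\gamma+2}$ with $\gamma=\frac{p_1}{p_1-p_2}\big(r-2+\tfrac{p_2}{p_1}(2-s)\big)$. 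This integral converges precisely when $\gamma>-1$, which a direct computation shows is equivalent to $\frac{1-s}{p_1}>\frac{1-r}{p_2}$; the same bookkeeping shows the resulting exponent equals exactly $r$, giving $\mu_r(S(I))\lesssim |I|^r$.

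The necessity parts rest on two families. For (1), when $s>r$, I would exploit that by Theorem \ref{T-Inner} (applicable since $p_1,p_2>1$) membership of an inner function in $\Q_s^{p_1}(\T)$ or $\Q_r^{p_2}(\T)$ depends only on the parameter $s$ (resp.\ $r$) and \emph{not} on $p$, through the $s$- (resp.\ $r$-) Carleson character of $\sum_k(1-|z_k|)^{s}\delta_{z_k}$. It then suffices to build a Blaschke sequence $\{z_k\}$ for which $\sum_k(1-|z_k|)^s\delta_{z_k}$ is $s$-Carleson while $\sum_k(1-|z_k|)^r\delta_{z_k}$ is not $r$-Carleson; placing $\approx 2^{m\theta}$ zeros, uniformly spread in angle, at each dyadic height $1-2^{-m}$ with $r\le\theta<s$ does this, since the global mass $\sum_m 2^{m(\theta-s)}$ converges while $\sum_m 2^{m(\theta-r)}$ diverges. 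For (2), with $p_1>p_2$, inner functions cannot detect the $p$-dependent threshold, so I turn to the lacunary series $f_\beta(z)=\sum_k 2^{-k\beta}z^{2^k}$. Using Khinchine's inequality (Theorem \ref{KI}) to compute $\int_0^{2\pi}|f_\beta'(re^{i\theta})|^{p}\,d\theta$, one finds the critical exponent: $f_\beta\in\Q_s^{p}(\T)$ exactly when $\beta>\frac{1-s}{p}$. Choosing $\beta$ with $\frac{1-s}{p_1}<\beta\le\frac{1-r}{p_2}$ (possible as soon as $\frac{1-s}{p_1}<\frac{1-r}{p_2}$) yields $f_\beta\in\Q_s^{p_1}(\T)\setminus\Q_r^{p_2}(\T)$; at the endpoint $\frac{1-s}{p_1}=\frac{1-r}{p_2}$ I would insert a slowly varying factor, taking coefficients $2^{-k\beta_0}k^{-\alpha}$ with $\frac1{p_1}<\alpha\le\frac1{p_2}$, so that the relevant sequence lies in $\ell^{p_1}\setminus\ell^{p_2}$.

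The main obstacle is the membership (upper) half of the lacunary computation, namely verifying that $|f_\beta'|^{p}(1-|z|^2)^{p-2+s}\,dA$ is $s$-Carleson on \emph{every} box, not merely of finite total mass: unlike the Blaschke examples the mass is not radially distributed, so one must control its angular localization, which I would handle with the Paley--Zygmund/Khinchine estimates for lacunary series afforded by Theorem \ref{KI}. By contrast the non-membership halves are easy, since it is enough to test the largest box $S(\T)=\D$ and use the lower Khinchine bound to see that $\int_\D|f_\beta'|^{p_2}(1-|z|^2)^{p_2-2+r}\,dA=\infty$ when $\beta\le\frac{1-r}{p_2}$. Verifying the all-scale $s$-Carleson bound for the Blaschke zero measure requires the same care with the spread of the zeros, and is the second place where the estimates must be carried out in detail.
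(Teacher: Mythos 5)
Your proposal is correct in substance and shares the paper's skeleton (reduce everything to Carleson measures via Theorem \ref{t3} and Corollary \ref{C1}; Blaschke products via Theorem \ref{T-Inner} for $s>r$; lacunary series, including the same endpoint example with coefficients $2^{-k(1-s)/p_1}k^{-\alpha}$, for the $p_1>p_2$ threshold), but you execute the positive direction quite differently. The paper proves the inclusions at the level of the analytic spaces $\Q_s^{p_1}(\D)\subseteq \Q_r^{p_2}(\D)$ -- part (1) via the Bloch containment, part (2) by citing the Besov embedding $B_{p_1}(s)\subseteq B_{p_2}(r)$ of \cite{ZZ} together with the M\"obius-invariant norm $\sup_a\|f\circ\sigma_a-f(a)\|_{B_{p_1}(s)}$ -- and then transfers back to $\T$ by the conjugate-function trick ($h=\widehat{g}+i\widetilde{g}$, $|h'|\thickapprox|\nabla\widehat{g}|$). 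You stay on the boundary throughout: the harmonic-Bloch bound $(1-|z|^2)|\nabla\widehat{f}(z)|\lesssim\|f\|_{BMO(\T)}$ replaces the analytic Bloch step in (1), and in (2) your H\"older splitting with exponents $p_1/p_2$ and $p_1/(p_1-p_2)$ is a genuinely self-contained substitute for the citation of \cite{ZZ}; I checked the bookkeeping, and indeed $\gamma+2=\frac{p_1r-p_2s}{p_1-p_2}$, the condition $\gamma>-1$ is exactly $\frac{1-s}{p_1}>\frac{1-r}{p_2}$, and the total exponent collapses to $r$. This buys an elementary, citation-free proof of the inclusions and avoids the transfer step entirely. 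Conversely, the paper shortens the counterexample verifications by quoting Zhao's lacunary characterization of $\Q_s^{p}(\D)$ (\cite[Theorem 5.5]{Zha1}) -- precisely the all-boxes estimate you flag as your main obstacle -- and by reusing the tangential sequence of Lemma \ref{KC}, which does double duty in the proof of Theorem \ref{mt1}(3), whereas your dyadically spread zeros ($\thickapprox 2^{m\theta}$ points at height $1-2^{-m}$, $r\le\theta<s$) are easier to check but serve only here. Your two acknowledged gaps are real but fillable exactly as you propose, and one is easier than you fear: for your radially decreasing coefficients the membership half needs no Khinchine-type input at all, since the crude bound $|f_\beta'(z)|\le\sum_k|a_k|2^k|z|^{2^k-1}\lesssim 2^{n(1-\beta)}$ for $1-|z|\thickapprox 2^{-n}$, integrated slab by slab over $S(I)$, already gives $\mu_s(S(I))\lesssim |I|^{\beta p+s}\lesssim|I|^s$. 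The two-sided tool you do need is the comparison $M_{p_2}(\rho,g')\thickapprox M_2(\rho,g')$ for Hadamard gap series (from \cite{Zy}; note Theorem \ref{KI} itself concerns Rademacher randomization, not a fixed lacunary series), and only for the divergence half, where, as you correctly say, testing the full disk suffices because an $r$-Carleson measure has finite mass.
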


\begin{proof} We first consider the inclusion relation between the analytic spaces $\Q_s^{p_1}(\D)$ and $\Q_r^{p_2}(\D)$. Note that  $\Q_s^{p_1}(\D)$ is a subset of the Bloch space $\B$.   Let $f\in \Q_s^{p_1}(\D)$. If $p_1\leq p_2$ and  $s\leq r$, then
\begin{eqnarray*}
&~&\sup_{a\in\D}\int_{\D}
|f'(z)|^{p_2}(1-|z|^2)^{p_2-2}\left(1-|\sigma_a(z)|^2\right)^rdA(z)\\
&\leq& \|f\|_{\B}^{p_2-p_1} \sup_{a\in\D}\int_{\D}
|f'(z)|^{p_1}(1-|z|^2)^{p_1-2}\left(1-|\sigma_a(z)|^2\right)^sdA(z),
\end{eqnarray*}
which gives  $\Q_s^{p_1}(\D)\subseteq \Q_r^{p_2}(\D)$.

By \cite[Theorem 70]{ZZ},  if $p_1>p_2$, $ B_{p_1}(s)\subseteq B_{p_2}(r)$ if and only if
 $\f{1-s}{p_1}> \f{1-r}{p_2}$. Note that $f\in \Q_s^{p_1}(\D)$ if and only if
 $$
 \sup_{a\in \D}\|f\circ \sigma_a-f(a)\|_{B_{p_ 1}(s)}<\infty.
 $$
 Thus   $\Q_s^{p_1}(\D)\subseteq \Q_r^{p_2}(\D)$ for $p_1>p_2$ and $\f{1-s}{p_1}> \f{1-r}{p_2}$.

For  $s>r$, it is easy to construct a Blaschke  sequence $\{z_k\}$ satisfying
that $\sum_k (1-|z_k|)^s \delta_{z_k}$ is an $s$-Carleson measure and
$\sum_k (1-|z_k|)^r \delta_{z_k}$ is not an $r$-Carleson measure (see Lemma \ref{KC} where such a sequence is constructed with even more properties).
Applying Theorem \ref{T-Inner}, we get that the corresponding Blaschke product $B$ satisfies
 $$
 B \in \Q_s^{p_1}(\D)\setminus\Q_r^{p_2}(\D).
 $$
Let  $\f{1-s}{p_1}<\f{1-r}{p_2}$.  By \cite[Theorem 5.5]{Zha1},  the  lacunary  series
$$
g(z)=\sum_{k=0}^{\infty} 2^{-\f{k}{2} (\f{1-s}{p_1} + \f{1-r}{p_2})} z^{2^k } \in \Q_s^{p_1}(\D)\setminus\Q_r^{p_2}(\D).
$$
If $p_1>p_2$,  $s< r$ and $\f{1-s}{p_1}=\f{1-r}{p_2}$, the  lacunary series
$$
h(z)=\sum_{k=0}^{\infty} 2^{-k\f{1-s}{p_1}}k^{-\f{1}{p_2}} z^{2^k } \in \Q_s^{p_1}(\D)\setminus\Q_r^{p_2}(\D).
$$
Therefore, if $p_1\leq p_2$, then $\Q_s^{p_1}(\D)\subseteq \Q_r^{p_2}(\D)$ if and only if $s\leq r$.
If $p_1>p_2$, then $\Q_s^{p_1}(\D)\subseteq \Q_r^{p_2}(\D)$ if and only if $\f{1-s}{p_1}> \f{1-r}{p_2}$.
Hence it is enough  to prove that the inclusion relations between $\Q_s^{p_1}(\D)$ and $\Q_r^{p_2}(\D)$ are  the same as the inclusion relations between
$\Q_s^{p_1}(\T)$ and $\Q_r^{p_2}(\T)$.

Suppose $\Q_s^{p_1}(\D)\subseteq \Q_r^{p_2}(\D)$.
 Let $g\in \Q_s^{p_1}(\T)$. Without loss of generality we may assume that $g$ is real valued.  Denote by $\widetilde{g}$ the harmonic conjugate function of $\widehat{g}$. Set $h=\widehat{g}+i\widetilde{g}$. The Cauchy-Riemann equations give $|\nabla \widehat{g}(z)|\thickapprox |h'(z)|$. Then Theorem \ref{t3} shows that $|h'(z)|^{p_1}(1-|z|^2)^{p_1-2+s}dA(z)$ is an $s$-Carleson measure. Thus $h\in \Q_s^{p_1}(\D)$. So $h$ is also in $\Q_r^{p_2}(\D)$. Then
$|\nabla \widehat{g}(z)|^{p_2}(1-|z|^2)^{p_2-2+r}dA(z)$ is an $r$-Carleson measure, that is $g\in \Q_r^{p_2}(\T)$. Hence $\Q_s^{p_1}(\T)\subseteq \Q_r^{p_2}(\T)$.
On the other hand,  if  $\Q_s^{p_1}(\T)\subseteq \Q_r^{p_2}(\T)$, then Corollary \ref{C1} shows $\Q_s^{p_1}(\D)\subseteq \Q_r^{p_2}(\D)$. The proof is complete.
\end{proof}

\vskip  3mm
\noindent{\bf {\sl Remark 2}.}\ \ If $1<p_1<p_2<\infty$, $0<r<s<1$ and $\f{1-s}{p_1}\geq \f{1-r}{p_2}$, it is easy to check that  any  lacunary series in $\Q_s^{p_1}(\D)$ must be in $\Q_r^{p_2}(\D)$, but $\Q_s^{p_1}(\D)\nsubseteq \Q_r^{p_2}(\D)$. Thus we are in need to use inner functions to determine the inclusion relation in the proof of Theorem \ref{inclusions}.

\subsection{Logarithmic Carleson type measures}

Let $\alpha \geq 0$ and $s>0$.   A positive Borel measure $\mu$ on $\D$ is called an $\alpha$-logarithmic $s$-Carleson measure if
$$
 \sup_{I \subseteq \T}\f{1}{|I|^s} \left(\log\f{2}{|I|}\right)^\alpha  \mu(S(I))<\infty.
$$
 By \cite{Zha2}, $\mu$ is an $\alpha$-logarithmic $s$-Carleson measure if and only if
$$
\sup_{a\in \D}\left(\log\f{2}{1-|a|^2}\right)^\alpha\int_{\D}\left(\f{1-|a|^2}{|1-\bar{a}z|^2}\right)^sd\mu(z)<\infty.
$$
Condition \eqref{Eq1-2} in Theorem \ref{mt1} can be described in terms of  $\alpha$-logarithmic $s$-Carleson measure as follows.

\begin{lemma} \label{Lem2}
Let $1<p<\infty$ and $0<r<1$. Then the following conditions are equivalent.
\begin{enumerate}
\item [(1)]
$$
\sup_{I \subseteq \T} \f{1}{|I|^r} \left(\log\f{2}{|I|}\right)^p \int_I\int_I \f{|f(\zeta)-f(\eta)|^p}{|\zeta-\eta|^{2-r}}|d\zeta||d\eta| <\infty.
$$
\item[(2)] $|\nabla\widehat{f}(z)|^p(1-|z|^2)^{p-2+r}dA(z)$  is a $p$-logarithmic $r$-Carleson measure.
\end{enumerate}
\end{lemma}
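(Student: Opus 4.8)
The statement is a logarithmically weighted refinement of the equivalence $(a)\Leftrightarrow(b)$ in Theorem \ref{t3}, so the plan is to rerun the proof of that equivalence—through Lemma \ref{EL} for one implication and Lemma \ref{St-L} for the other—while carrying along the extra weight $\big(\log\frac{2}{|I|}\big)^p$. The only new structural input is that this weight is essentially invariant under the fixed dilation $I\mapsto 3I$ appearing in both lemmas: since $\log\frac{2}{|3I|}=\log\frac{2}{|I|}-\log 3$, we have $\log\frac{2}{|3I|}\thickapprox\log\frac{2}{|I|}$ for arcs of small length, which is the only regime that matters (for arcs whose length is bounded below, both logarithmic factors are comparable to constants and the two conditions collapse to the unweighted equivalence already furnished by Theorem \ref{t3}).

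For $(2)\Rightarrow(1)$, I would apply Lemma \ref{EL} with $F=\widehat f$: for every arc $I$,
$$
\int_I\int_I\frac{|f(\zeta)-f(\eta)|^p}{|\zeta-\eta|^{2-r}}\,|d\zeta||d\eta|\lesssim\int_{S(3I)}|\nabla\widehat f(z)|^p(1-|z|^2)^{p-2+r}\,dA(z).
$$
If (2) holds, the right-hand side is $\lesssim |3I|^r\big(\log\frac{2}{|3I|}\big)^{-p}$; multiplying through by $\frac{1}{|I|^r}\big(\log\frac{2}{|I|}\big)^p$ and using $|3I|=3|I|$ together with the logarithmic comparability above yields (1).

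For $(1)\Rightarrow(2)$, I would fix an arc $I$ centered at $e^{i\theta_0}$, set $J=3I$, and invoke Lemma \ref{St-L}. Its first term is handled exactly as before: applying (1) on the arc $J$ and using $|J|=3|I|$ with the log comparability bounds the weighted double integral over $J$. The difficulty lies entirely in the tail term
$$
|I|^{p+r}\left(\int_{|t|\ge\frac13|J|}|f(e^{i(t+\theta_0)})-f_J|\,\frac{dt}{t^2}\right)^p.
$$
In the proof of Theorem \ref{t3} this was controlled using only $|J|\int_{|t|\ge\frac13|J|}|f-f_J|\,t^{-2}\,dt\lesssim\|f\|_{BMO(\T)}$, which renders the \emph{unweighted} quantity $O(1)$; but the extra factor $\big(\log\frac{2}{|I|}\big)^p$ now forces us to show that the tail genuinely decays, namely
$$
|J|\int_{|t|\ge\frac13|J|}|f(e^{i(t+\theta_0)})-f_J|\,\frac{dt}{t^2}\lesssim\frac{1}{\log\frac{2}{|J|}}.
$$

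This logarithmic tail estimate is the main obstacle, and I expect to derive it from a \emph{logarithmic mean oscillation} bound that (1) encodes. Repeating the H\"older computation from the proof of Lemma \ref{L1}, but keeping the logarithm, shows that (1) implies
$$
\frac{1}{|K|}\int_K|f(\zeta)-f_K|\,|d\zeta|\lesssim\frac{1}{\log\frac{2}{|K|}}
$$
for every arc $K$ of small length. I would then decompose the tail dyadically over the arcs $J_k=2^kJ$, estimating $\int_{J_k}|f-f_J|$ by $\int_{J_k}|f-f_{J_k}|$ plus $|J_k|\,|f_{J_k}-f_J|$ and telescoping $|f_{J_k}-f_J|\le\sum_{j=1}^{k}|f_{J_j}-f_{J_{j-1}}|$, inserting the logarithmic oscillation bound at each scale. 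The rapidly decaying weight $|t|^{-2}\thickapprox(2^k|J|)^{-2}$ supplies a geometric factor $2^{-k}$ that forces the resulting series to converge to $O\big(1/\log\frac{2}{|J|}\big)$; the few top scales $2^kJ\approx\T$, where the logarithmic oscillation bound degenerates, contribute negligibly and can be absorbed using the plain control $\|f\|_{BMO(\T)}<\infty$ (itself a consequence of (1) via Lemma \ref{L1} and $f\in L^p(\T)$). With this decay in hand, the weight $\big(\log\frac{2}{|I|}\big)^p$ is exactly cancelled and (2) follows.
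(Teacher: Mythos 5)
Your proposal is correct and follows essentially the same route as the paper: Lemma \ref{EL} applied to $F=\widehat f$ gives $(2)\Rightarrow(1)$, and for $(1)\Rightarrow(2)$ the paper likewise extracts the logarithmic mean oscillation bound $\frac{1}{|I|}\int_I|f-f_I|\,|d\zeta|\lesssim \big(\log\frac{2}{|I|}\big)^{-1}$ from (1) via H\"older and then feeds the resulting tail estimate $\int_{|t|\ge |J|/3}|f(e^{i(t+\theta_0)})-f_J|\,t^{-2}dt\lesssim \big(|I|\log\frac{3}{|I|}\big)^{-1}$ into Lemma \ref{St-L}. The only difference is that the dyadic telescoping argument you spell out for that tail estimate (which is sound, including your observation that the top scales are harmless) is not written out in the paper, which instead cites it as ``a same argument in \cite[p.~499]{Xi1}.''
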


\begin{proof} For an arc $I\subseteq \T$, by Lemma \ref{EL}, we have
$$
\int_I\int_I \f{|f(\zeta)-f(\eta)|^p}{|\zeta-\eta|^{2-r}}|d\zeta||d\eta|\lesssim \int_{S(3I)}|\nabla\widehat{f}(z)|^p(1-|z|)^{p-2+r}dA(z).
$$
Thus (2) implies (1).

Let (1) hold. Without loss of generality, let  $I$ be any arc centered at $1$. Then
$$
\int_I\int_I|f(e^{i\theta})-f(e^{it})|^pd\theta dt\lesssim \f{|I|^2}{\left(\log\f{2}{|I|}\right)^p}.
$$
Combining this with H\"older's inequality, we deduce that
\begin{eqnarray*}
\f{1}{|I|}\int_I|f(e^{i\theta})-f_I|d\theta
&\leq&\left(\f{1}{|I|}\int_I|f(e^{i\theta})-f_I|^pd\theta\right)^{1/p}\\
&\leq&\left(\f{1}{|I|^2}\int_I\int_I|f(e^{i\theta})-f(e^{it})|^pd\theta dt\right)^{1/p}\\
&\lesssim&\f{1}{\log\f{2}{|I|}}.
\end{eqnarray*}
Let $J=3I$. Using the above estimate and  a same argument in  \cite[p. 499]{Xi1}, we get
$$
\int_{|t|\geq \f{1}{3}|J|}|f(e^{it})-f_J|\f{dt}{t^2}\lesssim \f{1}{|I|\log\f{3}{|I|}}.
$$
Combining this with Lemma \ref{St-L}, we get that (2) is true.
\end{proof}

We also need the following result from \cite{PZ}.

\begin{otherl}\label{L-PZ}
 Let $p>1$ and $s> 0$. Let $\mu$ be a nonnegative Borel measure on $\D$. If $\mu$ is a $p$-logarithmic $s$-Carleson measure, then
 $$
 \int_{\D}|f(z)|^p d\mu(z)\lesssim \|f\|_{B_p(s)}^p
 $$
 for all $f\in B_p(s)$.
\end{otherl}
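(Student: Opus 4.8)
The plan is to prove the embedding by combining a sharp growth estimate for $B_p(s)$ functions with a Carleson-box decomposition, and to read the hypothesis through its integral (Berezin-type) form. First I would reduce to the case $f(0)=0$: taking $I=\T$ in the hypothesis shows $\mu(\D)<\infty$, so the constant term contributes at most $|f(0)|^p\mu(\D)$ and may be separated off, after which the target bound is by the seminorm alone. Next I would record the pointwise growth estimate
$$|f(z)|\lesssim \|f\|_{B_p(s)}\,(1-|z|^2)^{-s/p},\qquad z\in\D,$$
obtained from the subharmonicity of $|f'|^p$ (which gives $|f'(z)|^p\lesssim\|f\|_{B_p(s)}^p\,(1-|z|^2)^{-(p+s)}$) followed by radial integration; here $0<s<1$ is exactly what makes the radial integral converge. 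In particular $\int_\D|f|^p\,dA<\infty$, so $f$ lies in the Bergman space $A^p$.

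Then I would pass to a Whitney/Carleson-box decomposition of the disk. Writing $\D$ as a union of the top halves $\{Q\}$ of dyadic Carleson boxes, with $Q$ at height $\approx 2^{-k}$, I would estimate
$$\int_\D|f|^p\,d\mu=\sum_Q\int_Q|f|^p\,d\mu\lesssim\sum_Q\mu(Q)\,\sup_Q|f|^p\lesssim\sum_Q\mu(Q)\,2^{2k}\int_{\widehat Q}|f|^p\,dA,$$
where the last step uses subharmonicity on a slightly enlarged box $\widehat Q$ (and $k=k(Q)$). The problem is thereby reduced to summing the local energies $\int_{\widehat Q}|f|^p\,dA$ of $f$ against the masses $\mu(Q)$, and the only structural input on $\mu$ that may be used is the packing identity $\sum_{Q\subseteq S(I)}\mu(Q)=\mu(S(I))\lesssim|I|^s(\log\tfrac2{|I|})^{-p}$ coming from the $p$-logarithmic $s$-Carleson hypothesis.

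The main obstacle is that this is a genuinely borderline embedding: the extremal functions of $B_p(s)$ grow like $(1-|z|)^{-s/p}$, so bounding each box by its own Carleson estimate and summing produces a logarithmically divergent series, and a crude pointwise bound loses too much. The single logarithm in the hypothesis is exactly what restores convergence, but only after the summation is reorganised globally, pairing the Carleson packing of $\mu$ against the (bounded) integral means of $f$ rather than against its pointwise maximum. The cleanest way to carry this out is by duality: by $L^p(\mu)$--$L^{p'}(\mu)$ duality it suffices to bound $\big|\int_\D f\,\overline g\,d\mu\big|$ for $\|g\|_{L^{p'}(\mu)}\le1$; representing $f$ from $f'$ by a Bergman reproducing formula and interchanging the order of integration moves the pairing onto the $B_p(s)$ side, leaving an estimate for the Berezin-type transform of $g\,d\mu$ that is handled by a Schur test. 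In that Schur test the integral form of the hypothesis, namely $\sup_{a}(\log\tfrac2{1-|a|^2})^p\int_\D(\tfrac{1-|a|^2}{|1-\bar a z|^2})^s\,d\mu(z)<\infty$, equivalent to the box condition by \cite{Zha2}, supplies precisely the borderline integrability needed to beat the logarithm. Verifying this Schur estimate at the borderline is where the real work lies.
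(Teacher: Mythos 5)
First, a point of orientation: the paper never proves this statement --- Lemma~\ref{L-PZ} is quoted verbatim from \cite{PZ} --- so your proposal must be measured against the argument in that source. Measured that way, there is a genuine gap: your write-up is a plan whose decisive step is never performed. The growth estimate and the Carleson-box decomposition are preparatory, and you yourself concede they end in a logarithmically divergent sum; the duality-plus-Schur-test step that is supposed to rescue the argument is only announced (``where the real work lies''), not executed. Nor is it a routine verification that could be waved through: after dualizing and applying H\"older against the weight $(1-|w|^2)^{p-2+s}$, the borderline choice of kernel exponent reduces matters to the requirement
$$
\sup_{w\in\D}\int_{\D}\frac{1}{|1-\bar{w}z|^{s}}\Bigl(\log\frac{2}{1-|z|}\Bigr)^{p-1}d\mu(z)<\infty,
$$
which is \emph{false} in general for $p$-logarithmic $s$-Carleson measures: for $\mu=\sum_n 2^{-ns}n^{-p}\delta_{(1-2^{-n})e^{i\theta_0}}$ (which satisfies the hypothesis) this supremum diverges like an iterated logarithm as $w\to e^{i\theta_0}$. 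Non-borderline kernel exponents leave uncancelled powers unless one runs a two-parameter dyadic summation that appears nowhere in your proposal, and a Schur test with the usual power test functions $(1-|z|)^{-\gamma}$ fails on one side or the other precisely because such functions cannot register a logarithmic gain. So the route stalls exactly at the point you left open.

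What is missing is the mechanism by which the exponent $p>1$ on the logarithm actually gets spent. The simplest correct argument (in the spirit of Stegenga \cite{Steg2}, and of the proof in \cite{PZ}) injects the logarithm where it costs nothing: write $|f(z)-f(0)|\le\int_0^{|z|}|f'(te^{i\theta})|\,dt$ and apply H\"older with the splitting $|f'|=\bigl[\,|f'|\,(1-t)^{(p-1)/p}\log\frac{e}{1-t}\,\bigr]\cdot\bigl[\,(1-t)^{-(p-1)/p}(\log\frac{e}{1-t})^{-1}\,\bigr]$; the companion factor $\int_0^1(1-t)^{-1}(\log\frac{e}{1-t})^{-p'}dt$ converges because $p'>1$, giving
$$
|f(z)-f(0)|^p\lesssim\int_0^{|z|}|f'(te^{i\theta})|^p(1-t)^{p-1}\Bigl(\log\frac{e}{1-t}\Bigr)^{p}dt.
$$
Now smear $|f'(te^{i\theta})|^p$ over $D(te^{i\theta},(1-t)/2)$ by subharmonicity and apply Fubini against $d\mu(z)$: if the point $te^{i\theta_z}$ lies within $(1-t)/2$ of $w$, then $1-t\thickapprox 1-|w|$ and $z$ is confined to a single box $S(I_w)$ with $|I_w|\thickapprox 1-|w|$, so the hypothesis contributes $\mu(S(I_w))\lesssim(1-|w|)^s(\log\frac{e}{1-|w|})^{-p}$, whose logarithm exactly cancels the one just introduced, leaving $\int_\D|f'(w)|^p(1-|w|)^{p-2+s}dA(w)=\|f\|_{B_p(s)}^p$ plus the harmless term $|f(0)|^p\mu(\D)$. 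Two smaller inaccuracies in your text: the lemma and the growth estimate $|f(z)|\lesssim\|f\|_{B_p(s)}(1-|z|)^{-s/p}$ hold for every $s>0$, not only $s<1$ (what matters is $s/p>0$, not convergence of the radial integral), while your side claim that $f\in A^p$ genuinely fails when $s\ge1$; neither restriction is needed in the argument above.
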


Now we are ready for the proofs of the main results of the paper.

\section{Proof of Theorem \ref{mt1}}

\subsection{Proof of part (1)}

Assume first  that $f\in M(\Q_s^{p_1}(\T), \Q_r^{p_2}(\T))$.
Set $g_w(z)=\log \f{2}{1-\bar w z}$, $w\in \D$. From \cite[Lemma 2.6]{PZ},
$$
\sup_{w\in \D}\|g_w\|_{\Q_s^{p_1}(\D)}<\infty,
$$
for all  $1<p_1<\infty$ and $0<s<1$.
This together with Corollary \ref{C1} shows  that $h_w(e^{i\theta})=\log \f{2}{1-\bar w e^{i\theta}}$ belongs to $\Q_s^{p_1}(\T)$ uniformly for  $w\in \D$, and hence the same is true for $g_ w=\Re h_ w$.
For any arc $I\subseteq \T$ centered at $e^{it}$ with $|I|<1/3$, take $a=(1-|I|)e^{it}$. Then $g_a(e^{i\theta})\thickapprox \log \f{2}{|I|}$ for all $e^{i\theta} \in I$. Since $f$ is a pointwise multiplier, then $f g_a\in \Q_r^{p_2}(\T)$  and it follows by Lemma \ref{L1} that  $fg_a\in BMO(\T)$.
By \cite[Lemma 2.6]{Steg1},
$$
\left|\f{1}{|I|}\int_I f(\zeta)g_a(\zeta)|d\zeta|\right|\lesssim \f{1}{\log |I|} \|fg_a\|_{BMO(\T)}\lesssim \f{1}{\log |I|}\|fg_a\|_{\Q_r^{p_2}(\T)}.
$$
Thus
$$
\left|\f{1}{|I|}\int_I f(\zeta)|d\zeta|\right|\lesssim \sup_{a\in \D}\|g_a\|_{\Q_s^{p_1}(\D)}<\infty,
$$
which shows $f\in L^\infty(\T)$.

Since
$$
g_a(e^{i\theta})(f(e^{it})-f(e^{i\theta}))=g_a(e^{it})f(e^{it})-g_a(e^{i\theta})f(e^{i\theta})+f(e^{it})\big (g_a(e^{i\theta})-g_a(e^{it})\big ),
$$
we get
\begin{equation*}
\f{1}{|I|^r}\int_I\int_I\f{|g_a(e^{i\theta})(f(e^{it})-f(e^{i\theta}))|^{p_2}}{|e^{i\theta}-e^{it}|^{2-r}}d\theta\,  dt
\lesssim \|g_a f\|_{\Q_r^{p_2}(\T)}^{p_2}+\|f\|_{L^\infty(\T)}^{p_2}\|g_a\|_{\Q_s^{p_1}(\T)}^{p_2}.
\end{equation*}
Note that $g_a(e^{i\theta})\thickapprox \log \f{2}{|I|}$ for all $e^{i\theta} \in I$. Thus
$$
\int_I\int_I\f{|f(e^{it})-f(e^{i\theta})|^{p_2}}{|e^{i\theta}-e^{it}|^{2-r}}d\theta dt\lesssim \f{|I|^r}{\left(\log\f{2}{|I|}\right)^{p_2}},
$$
which gives (1.2).\\

Next, suppose that $f\in L^\infty(\T)$ and (1.2) holds. We need to show  $f\in M(\Q_s^{p_1}(\T), \Q_r^{p_2}(\T))$. The proof of this implication is based on a technique developed in \cite{PP1} (see also \cite{PZ}).  By Lemma \ref{Lem2},
$|\nabla \widehat f(z)|^{p_2}(1-|z|^2)^{p_2-2+r}dA(z)$ is a $p_2$-logarithmic $r$-Carleson measure. Thus
 $$
 \sup_{a\in \D}\left(\log\f{2}{1-|a|}\right)^{p_2}\int_{\D}|\nabla\widehat f(z)|^{p_2} (1-|z|^2)^{p_2-2}(1-|\sigma_a(z)|^2)^rdA(z)<\infty.
 $$
 For all $g\in \Q_s^{p_1}(\T)$, we need to prove $gf\in  \Q_r^{p_2}(\T)$. Since $\widehat{f}\cdot \widehat{g}$ is an extension of $gf$, by Lemma \ref{EL}, it is enough to prove that
\[
I(a):=\int_{\D}\big |\nabla (\widehat f \, \widehat g)(z)\big |^{p_2} (1-|z|^2)^{p_2-2}(1-|\sigma_a(z)|^2)^rdA(z)\le C
\]
for some positive constant $C$ not depending on the point $a\in \D$. Using Theorem \ref{t3}, we have
\begin{eqnarray*}
I(a)&\lesssim&\int_{\D}|\widehat f(z)|^{p_2}|\nabla\widehat g(z)|^{p_2} (1-|z|^2)^{p_2-2}(1-|\sigma_a(z)|^2)^rdA(z)\\
&~&+\int_{\D}|\nabla\widehat f(z)|^{p_2}|\widehat g(z)|^{p_2} (1-|z|^2)^{p_2-2}(1-|\sigma_a(z)|^2)^rdA(z)\\
&\lesssim&\|f\|_{L^\infty(\T)}^{p_2}\cdot \|g\|_{\Q_r^{p_2}(\T)}^{p_2}+\int_{\D}|\nabla\widehat f(z)|^{p_2}|\widehat g(z)|^{p_2} (1-|z|^2)^{p_2-2}(1-|\sigma_a(z)|^2)^rdA(z).
\end{eqnarray*}
If $p_1\leq p_2$ and $s\leq r$, Theorem  \ref{inclusions} gives $\Q_s^{p_1}(\T) \subseteq \Q_r^{p_2}(\T)$, and by  the closed-graph theorem,
$
\|g\|_{\Q_r^{p_2}(\T)}\lesssim \|g\|_{\Q_s^{p_1}(\T)}
$
for all $g\in \Q_s^{p_1}(\T)$. Hence, we have
\[
I(a)\lesssim \|f\|_{L^\infty(\T)}^{p_2}\cdot \|g\|_{\Q_s^{p_1}(\T)}^{p_2}+\int_{\D}|\widehat g(z)|^{p_2} \,|\nabla\widehat f(z)|^{p_2} (1-|z|^2)^{p_2-2}(1-|\sigma_a(z)|^2)^rdA(z).
\]
Without loss of generality, we may assume that $g$ is real valued.  Let  $\widetilde{g}$ be the harmonic conjugate function of $\widehat{g}$. Set $h=\widehat{g}+i\widetilde{g}$. The Cauchy-Riemann equations give $|\nabla \widehat{g}(z)|\thickapprox |h'(z)|$. Then  $h\in \Q_s^{p_1}(\D)\subseteq \Q_r^{p_2}(\D)$ and $|\widehat g(z)|\leq |h(z)|$. Hence
\[
\begin{split}
\int_{\D}|\widehat g(z)|^{p_2}&\,|\nabla\widehat f(z)|^{p_2} \,(1-|z|^2)^{p_2-2}\,(1-|\sigma_a(z)|^2)^r \,dA(z)
\\
&\leq\int_{\D}|h(z)|^{p_2}\,|\nabla\widehat f(z)|^{p_2}\, (1-|z|^2)^{p_2-2}\,(1-|\sigma_a(z)|^2)^r \,dA(z)
\\
&\lesssim \int_{\D}|h(a)|^{p_2}\,|\nabla\widehat f(z)|^{p_2}\, (1-|z|^2)^{p_2-2}\,(1-|\sigma_a(z)|^2)^r \,dA(z)
\\
&\quad +\int_{\D}|h(z)-h(a)|^{p_2}\,|\nabla\widehat f(z)|^{p_2}\, (1-|z|^2)^{p_2-2}\,(1-|\sigma_a(z)|^2)^r \,dA(z)
\\
&\thickapprox T_1+T_2.
\end{split}
\]
Since  $\Q_s^{p_1}(\D)$ is a subspace of the Bloch space $\B$, then any function $h\in \Q_s^{p_1}(\D)$ has the following growth:
$$
|h(z)|\lesssim \|h\|_{\B}\log\f{2}{1-|z|}\lesssim \|h\|_{\Q_s^{p_1}(\D)}\log\f{2}{1-|z|}
$$
for all $z\in \D$. Thus
\begin{eqnarray*}
T_1\lesssim \sup_{a\in \D} \left(\log\f{2}{1-|a|}\right)^{p_2}\int_{\D}|\nabla\widehat f(z)|^{p_2} (1-|z|^2)^{p_2-2}(1-|\sigma_a(z)|^2)^rdA(z)<\infty.
\end{eqnarray*}
Applying  Lemma \ref{L-PZ}, we see  that
\begin{eqnarray*}
T_2&=&(1-|a|^2)^r\int_{\D}\left|\f{h(z)-h(a)}{(1-\bar{a}z)^{\f{2r}{p_2}}}\right|^{p_2}|\nabla\widehat f(z)|^{p_2} (1-|z|^2)^{p_2-2+r}dA(z)\\
&\lesssim&(1-|a|^2)^r\left(|h(0)-h(a)|^{p_2}+\int_{\D}\left|\left(\f{h(z)-h(a)}{(1-\bar{a}z)^{\f{2r}{p_2}}}\right)'\right|^{p_2}(1-|z|^2)^{p_2-2+r}dA(z)\right)\\
&\lesssim&(1-|a|^2)^r\|h\|_{\mathcal{B}}^{p_2}\left(\log\f{2}{1-|a|}\right)^{p_2}+\int_{\D}|h'(z)|^{p_2}(1-|z|^2)^{p_2-2}(1-|\sigma_a(z)|^2)^rdA(z)\\
&~&+\int_{\D}\f{|h(z)-h(a)|^{p_2}}{|1-\bar{a}z|^{p_2}}(1-|z|^2)^{p_2-2}(1-|\sigma_a(z)|^2)^rdA(z)\\
&\lesssim&\|h\|_{\Q_s^{p_1}(\D)}^{p_2}+\int_{\D}\f{|h(z)-h(a)|^{p_2}}{|1-\bar{a}z|^{p_2}}(1-|z|^2)^{p_2-2}(1-|\sigma_a(z)|^2)^rdA(z).
\end{eqnarray*}
By \cite[Proposition 2.8]{PZ}, one gets
$$
\int_{\D}\f{|h(z)-h(a)|^{p_2}}{|1-\bar{a}z|^{p_2}}(1-|z|^2)^{p_2-2}(1-|\sigma_a(z)|^2)^rdA(z)\lesssim \|h\|_{\Q_r^{p_2}(\D)}^{p_2}.
$$
Combining the above estimates, we obtain
$$
\sup_{a\in \D}I(a)<\infty.
$$
Thus $f\in M(\Q_s^{p_1}(\T), \Q_r^{p_2}(\T))$.

\subsection{Proof of part (2)}

Let  $p_1>p_2$ and $s\leq r$. If $\f{1-s}{p_1}>\f{1-r}{p_2}$, Theorem  \ref{inclusions} gives  the inclusion $\Q_s^{p_1}(\T) \subseteq \Q_r^{p_2}(\T)$. Hence
$$
M(\Q_r^{p_2}(\T), \Q_r^{p_2}(\T))\subseteq M(\Q_s^{p_1}(\T), \Q_r^{p_2}(\T)).
$$
Checking the proof in (1), one gets that $f\in M(\Q_s^{p_1}(\T), \Q_r^{p_2}(\T))$ if and only if $f\in L^\infty(\T)$ and $f$ satisfies (1.2).\\

In case that $\f{1-s}{p_1}\leq \f{1-r}{p_2}$, as has been observed in the proof of Theorem \ref{inclusions}, there is a  lacunary Fourier series
in $\Q_s^{p_1}(\T)\setminus \Q_r^{p_2}(\T)$. Then we get
 $ M(\Q_s^{p_1}(\T), \Q_r^{p_2}(\T))=\{0\}$ as a consequence of the following result.

\begin{lemma} \label{LS} Let $1<p_1,\ p_2<\infty$ and $0<s,\ r<1$. If there exists a lacunary Fourier series
$
g(e^{i\theta})=\sum_{k=0}^{\infty}a_k e^{i2^k \theta} \in \Q_s^{p_1}(\T)\setminus \Q_r^{p_2}(\T),
$
 then $ M(\Q_s^{p_1}(\T),\ \Q_r^{p_2}(\T))=\{0\}$.
\end{lemma}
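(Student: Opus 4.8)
The plan is to argue by contradiction: assuming some $f\in M(\Q_s^{p_1}(\T),\Q_r^{p_2}(\T))$ is not identically zero, I will eventually force $g\in \Q_r^{p_2}(\T)$, contradicting the hypothesis. The engine is a randomization of $g$ by the Rademacher functions together with Khinchine's inequality (Theorem \ref{KI}). I set $g_t(e^{i\theta})=\sum_{k=0}^\infty a_k r_k(t)\,e^{i2^k\theta}$ for $t\in(0,1)$. Each $g_t$ is lacunary with the \emph{same} moduli of coefficients as $g$, and membership of a lacunary series in $\Q_s^{p_1}(\T)$ is governed, through Corollary \ref{C1} and \cite[Theorem 5.5]{Zha1}, by a condition depending only on $\{|a_k|\}$; moreover $\|g_t\|_{L^{p_1}(\T)}\thickapprox(\sum_k|a_k|^2)^{1/2}$ by Khinchine. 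Hence the symbols $g_t$ all lie in $\Q_s^{p_1}(\T)$ with norms bounded by a constant multiple of $\|g\|_{*,\Q_s^{p_1}(\T)}$, and the boundedness of $M_f$ yields a uniform bound $\sup_t\|fg_t\|_{\Q_r^{p_2}(\T)}\le C$.

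Next I would work at the level of a single arc. Fixing $I$, for every $t$ the defining double integral of $fg_t$ over $I$ is at most $C^{p_2}|I|^r$. I integrate this in $t\in(0,1)$, move the $t$-integral inside by Fubini, and apply the lower Khinchine bound (to the real and imaginary parts) pointwise in $(\zeta,\eta)$ to the sum $\sum_k a_k r_k(t)\big(f(\zeta)\zeta^{2^k}-f(\eta)\eta^{2^k}\big)$. Writing $\Sigma(\zeta,\eta)=\sum_k|a_k|^2|\zeta^{2^k}-\eta^{2^k}|^2$, this gives the uniform-in-$I$ estimate
\[
\frac{1}{|I|^r}\int_I\int_I\frac{\big(\sum_k|a_k|^2|f(\zeta)\zeta^{2^k}-f(\eta)\eta^{2^k}|^2\big)^{p_2/2}}{|\zeta-\eta|^{2-r}}\,|d\zeta||d\eta|\lesssim C^{p_2}.
\]
To separate $f$ from $g$ I use $f(\zeta)\zeta^{2^k}-f(\eta)\eta^{2^k}=f(\zeta)(\zeta^{2^k}-\eta^{2^k})+(f(\zeta)-f(\eta))\eta^{2^k}$ and the triangle inequality in $\ell^2$, together with $\sum_k|a_k|^2=\|g\|_{2}^2<\infty$. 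The resulting error term is controlled because $f=f\cdot 1\in \Q_r^{p_2}(\T)$ (constants lie in $\Q_s^{p_1}(\T)$), so by Theorem \ref{t3} its contribution is $\lesssim\|f\|_{\Q_r^{p_2}(\T)}^{p_2}$. I thus obtain the weighted bound
\[
\sup_{I}\frac{1}{|I|^r}\int_I\int_I\frac{|f(\zeta)|^{p_2}\,\Sigma(\zeta,\eta)^{p_2/2}}{|\zeta-\eta|^{2-r}}\,|d\zeta||d\eta|=:K<\infty.
\]

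The decisive observation is that $\Sigma$ and the kernel depend on $\zeta=e^{i\theta}$, $\eta=e^{i\phi}$ only through $\theta-\phi$, since $|\zeta^{2^k}-\eta^{2^k}|=2|\sin(2^{k-1}(\theta-\phi))|$; with slight abuse $\Sigma(u)=\sum_k4|a_k|^2\sin^2(2^{k-1}u)$. I would therefore average the last bound over all arcs of a fixed length $\ell$, that is, over their centers. By Fubini the measure of admissible centers equals $(\ell-|\theta-\phi|)_+$, and the weight $|f(e^{i\theta})|^{p_2}$ integrates out to a multiple of $\|f\|_{L^{p_2}(\T)}^{p_2}$; what survives is exactly the center-free square-function quantity $P(\ell):=\frac{1}{\ell^r}\int_{|u|<\ell}(\ell-|u|)\,\Sigma(u)^{p_2/2}\,|e^{iu}-1|^{-(2-r)}\,du$. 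Hence $\|f\|_{L^{p_2}(\T)}^{p_2}\,P(\ell)\lesssim K$ for every $\ell$, and since $f\not\equiv 0$ this forces $\sup_{\ell}P(\ell)<\infty$.

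Finally I would convert this into a statement about $g$. Averaging the ordinary $\Q_r^{p_2}(\T)$-integrand of $g$ over the arc center and applying Khinchine to the lacunary series $\sum_k a_k(1-e^{-i2^k u})e^{i2^k\theta}$ in the variable $\theta$ shows that this center-average is comparable to $P(\ell)$; combined with the fact that for a \emph{lacunary} symbol the $\Q_r^{p_2}(\T)$-seminorm is comparable to its center-average at each scale (because, via the Carleson-measure description of Theorem \ref{t3}, the integral over a Carleson sector is essentially angle-independent for lacunary functions), one gets $\|g\|_{\Q_r^{p_2}(\T)}^{p_2}\thickapprox\sup_\ell P(\ell)<\infty$, i.e. $g\in\Q_r^{p_2}(\T)$ --- the desired contradiction. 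I expect this last step to be the main obstacle: the passage from the center-\emph{averaged} quantity $P(\ell)$ back to the center-\emph{supremum} defining the seminorm is precisely where the lacunary hypothesis is indispensable, entering through Khinchine's inequality and the near angular-uniformity of lacunary Carleson sector integrals.
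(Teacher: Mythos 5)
Your randomization scheme and everything up to the bound $\|f\|_{L^{p_2}(\T)}^{p_2}\,P(\ell)\lesssim K$ is correct, and it is essentially the strategy of the paper's own proof: the paper uses the same Rademacher functions $g_t$, the same splitting $f(\zeta)\bigl(g_t(\zeta)-g_t(\eta)\bigr)=\bigl(f(\zeta)g_t(\zeta)-f(\eta)g_t(\eta)\bigr)+\bigl(f(\eta)-f(\zeta)\bigr)g_t(\eta)$, Khinchine's inequality, and the observation that $f=f\cdot 1\in\Q_r^{p_2}(\T)$ to control the cross term. The structural difference is that the paper works with the M\"obius-invariant form of the seminorm (condition (c) of Theorem \ref{t3}) and only needs the single value $a=0$, i.e.\ the whole circle at once, whereas you work arc by arc and then average over centers; your averaging step (the factor $(\ell-|u|)_+$ and the factoring out of $\|f\|_{L^{p_2}(\T)}^{p_2}$) is a correct, if more elaborate, substitute for that.

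The gap is in the last step, exactly where you anticipated it. The claim that for a lacunary symbol the $\Q_r^{p_2}(\T)$-seminorm at scale $\ell$ is comparable to its average over centers at that scale is not proved, and the heuristic you offer (near angle-independence of Carleson-sector integrals for lacunary functions) does not establish it: Zygmund-type local-to-global equivalences for lacunary series hold on arcs whose length exceeds a constant times the reciprocal of the \emph{lowest} frequency present, and in $g(\zeta)-g(\eta)$ the frequencies $2^k\lesssim 1/\ell$, though damped by the factor $|1-e^{i2^k u}|$, are still present; crude estimates of their contribution lose unbounded factors as $\ell\to 0$. Fortunately you do not need this claim, nor the supremum over $\ell$: a single large scale already suffices. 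From $P(\pi)<\infty$, together with $(\pi-|u|)\ge \pi/2$ and $|e^{iu}-1|\thickapprox |u|$ for $|u|\le \pi/2$, you get $\int_0^{\pi/2}\Sigma(u)^{p_2/2}u^{r-2}\,du<\infty$. On the other hand, for $2^{-j-1}\le u\le 2^{-j}$ one has $\Sigma(u)\ge 4|a_j|^2\sin^2(2^{j-1}u)\gtrsim |a_j|^2$, whence $\int_0^{\pi/2}\Sigma(u)^{p_2/2}u^{r-2}\,du\gtrsim \sum_{j}|a_j|^{p_2}2^{j(1-r)}$; and this series diverges, because by Corollary \ref{C1} and the lacunary characterization in \cite[Theorem 5.5]{Zha1} the hypothesis $g\notin\Q_r^{p_2}(\T)$ means precisely $\sum_k|a_k|^{p_2}2^{k(1-r)}=\infty$. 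This contradiction closes your argument, and it is in essence how the paper itself concludes: there the estimate at $a=0$ produces $\int_0^{2\pi}h^{r-2}\bigl(\sum_k|a_k|^2|1-e^{i2^kh}|^2\bigr)^{p_2/2}dh<\infty$, which is then shown, via \eqref{Eq-C1}, Khinchine's inequality, and the comparability $M_2\thickapprox M_{p_2}$ of integral means of lacunary series, to dominate $\sum_k|a_k|^{p_2}2^{k(1-r)}=\infty$.
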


\begin{proof}
We adapt an argument from \cite{GGP}. By Corollary \ref{C1} and the lacunary series characterization of $\Q_s^{p_1}(\D)$ spaces in  \cite[Theorem 5.5]{Zha1}, we see that
$$
\|g\|_{\Q_s^{p_1}(\T)}^{p_1} \thickapprox \sum_{k=0}^\infty |a_k|^{p_1}\,2^{k(1-s)}<\infty
$$
and
$$
\sum_{k=0}^\infty |a_k|^{p_2} \,2^{k(1-r)}=\infty.
$$
Let $\{r_k(t)\}_{k=0}^\infty$ be the sequence of Rademacher functions and consider the function
$$
g_t(z)=\sum_{k=0}^\infty r_k(t)a_kz^{2^k},\ \ 0\leq t\leq 1.
$$
Then $g_t\in\Q_s^{p_1}(\T)$ with $\|g_t\|_{\Q_s^{p_1}(\T)}\thickapprox\|g\|_{\Q_s^{p_1}(\T)}$. Suppose $f\in M(\Q_s^{p_1}(\T), \Q_r^{p_2}(\T))$.
For any $a\in\D$, we have
\[
\begin{split}
\int_{0}^{1} I_{g_ t}(f,a)\,dt&:=\int_0^1\int_\T\int_\T\f{|f(\zeta)(g_t(\zeta)-g_t(\eta))|^{p_2}}{|\zeta-\eta|^{2-r}}\left(\f{1-|a|^2}{|\zeta-a||\eta-a|}\right)^r|d\zeta||d\eta|dt\\
\\
&\lesssim \int_0^1\int_\T\int_\T\f{|f(\zeta)g_t(\zeta)-f(\eta)g_t(\eta)|^{p_2}}{|\zeta-\eta|^{2-r}}\left(\f{1-|a|^2}{|\zeta-a||\eta-a|}\right)^r|d\zeta||d\eta|dt\\
\\
& \quad  +\int_0^1\int_\T\int_\T\f{|(f(\zeta)-f(\eta))g_t(\eta)|^{p_2}}{|\zeta-\eta|^{2-r}}\left(\f{1-|a|^2}{|\zeta-a||\eta-a|}\right)^r|d\zeta||d\eta|dt.
\end{split}
\]
Hence, using Fubini's Theorem,   we obtain
\[
\begin{split}
\int_{0}^{1} \!\! I_{g_ t}(f,a)\,dt&\lesssim \int_{0}^{1}\|fg_t\|_{\Q_r^{p_2}(\T)}^{p_2}\,dt\\
\\
&\quad +\int_\T\int_\T\f{|f(\zeta)-f(\eta)|^{p_2}}{|\zeta-\eta|^{2-r}} \left(\int_0^1|g_t(\eta)|^{p_2}dt\right) \left(\f{1-|a|^2}{|\zeta-a||\eta-a|}\right)^r|d\zeta||d\eta|.
\end{split}
\]
Since $f\in M(\Q_s^{p_1}(\T), \Q_r^{p_2}(\T))$, then $\|fg_t\|_{\Q_r^{p_2}(\T)}\lesssim \|g\|_{\Q_s^{p_1}(\T)}$. Also, by Khinchine's inequality
\[
\int_0^1|g_t(\eta)|^{p_2}\,dt \thickapprox \left(\sum_{k=0}^\infty |a_k|^2\right)^{\f{{p_2}}{2}} \thickapprox \|g\|_{H^2}^{p_ 2}\lesssim \|g\|_{\Q_s^{p_1}(\T)},
\]
because $\Q_s^{p_1}(\D) \subseteq H^2$. Combining these estimates with Theorem \ref{t3} we have

\begin{equation}\label{Eq-Ig1}
\int_{0}^{1} \!\! I_{g_ t}(f,a)\,dt
\lesssim \|g\|_{\Q_s^{p_1}(\T)}^{p_2}+\|g\|_{\Q_s^{p_1}(\T)}^{p_2}\cdot \|f\|_{\Q_r^{p_2}(\T)}^{p_2}<\infty,
\end{equation}
because, as $1\in \Q_s^{p_1}(\T)$, then $f=f\cdot 1 \in \Q_r^{p_2}(\T)$.\\

Now, if $f\neq 0$, then there exists a positive constant $C$ such that
\begin{equation}\label{Eqfzero}
\int_0^{2\pi}\!|f(e^{i\theta})|d\theta\geq C.
\end{equation}
Fubini's theorem and a change of variables give
\[
\int_0^1 \!\! I_{g_ t}(f,0)\,dt\thickapprox \int_0^{2\pi}\!\! \f{dh}{h^{2-r}}\int_0^{2\pi}\!\!|f(e^{i\theta})|d\theta\int_0^1|g_t(e^{i\theta})-g_t(e^{i(\theta+h)})|^{p_2}dt.
\]
Applying  Khinchine's inequality again, we see  that
\[
\int_0^1|g_t(e^{i\theta})-g_t(e^{i(\theta+h)})|^{p_2}dt  \thickapprox \left(\sum_{k=0}^\infty |a_k|^2|1-e^{i2^kh}|^2\right)^{\f{{p_2}}{2}}.
\]
Hence, by \eqref{Eqfzero},
\begin{eqnarray*}
\int_0^1 \!\! I_{g_ t}(f,0)\,dt&\thickapprox&\int_0^{2\pi}\f{\left(\sum_{k=0}^\infty |a_k|^2|1-e^{i2^kh}|^2\right)^{\f{{p_2}}{2}}}{h^{2-r}}dh\int_0^{2\pi}|f(e^{i\theta})|d\theta\\
&\gtrsim
&\int_0^{2\pi}\f{dh}{h^{2-r}}\int_0^1|g_t(e^{i\theta})-g_t(e^{i(\theta+h)})|^{p_2}dt,
\end{eqnarray*}
for any $\theta\in[0,\ 2\pi)$. Thus, using  \eqref{Eq-C1} and the  Khinchine inequality, we have
\begin{eqnarray*}
\int_0^1 \!\! I_{g_ t}(f,0)\,dt
&\gtrsim&\int_0^{2\pi}\f{dh}{h^{2-r}}\int_0^{2\pi} \int_0^1|g_t(e^{i\theta})-g_t(e^{i(\theta+h)})|^{p_2}dtd\theta\\
&\thickapprox&\int_0^1\int_\T\int_\T\f{|g_t(\zeta)-g_t(\eta)|^{p_2}}{|\zeta-\eta|^{2-r}}|d\zeta||d\eta|dt\\
&\thickapprox&\int_0^1\int_\D|g_t'(z)|^{p_2}(1-|z|^2)^{p_2-2+r}dA(z)dt\\
&\thickapprox&\int_\D\left(\int_0^1|g_t'(z)|^{p_2}dt\right)(1-|z|^2)^{p_2-2+r}dA(z)\\
&\thickapprox&\int_\D\left(M_2(|z|, g')\right)^{p_2}(1-|z|^2)^{p_2-2+r}dA(z).
\end{eqnarray*}
Since $g'(z)$ is also a lacunary series, it is well known that $M_2(|z|, g')\thickapprox M_{p_2}(|z|, g')$ (see \cite{Zy} for example). Hence
\begin{eqnarray*}
\int_0^1 \!\! I_{g_ t}(f,0)\,dt
&\gtrsim&\int_\D\left(M_{p_2}(|z|, g')\right)^{p_2}(1-|z|^2)^{p_2-2+r}dA(z)\\
&\thickapprox&\int_\D|g'(z)|^{p_2}(1-|z|^2)^{p_2-2+r}dA(z)\\
&\thickapprox& \sum_{k=0}^\infty |a_k|^{p_2}2^{k(1-r)}=\infty.
\end{eqnarray*}
This  contradicts \eqref{Eq-Ig1}. Thus $ M(\Q_s^{p_1}(\T), \Q_r^{p_2}(\T))=\{0\}$.
\end{proof}

\subsection{Preliminaries for the proof of part (3)}

Lemma \ref{LS} shows that $M(\Q_s^{p_1}(\T), \Q_r^{p_2}(\T))$ is trivial for a wide range of values of the parameters $p_1$, $s$, $p_2$, $r$. However Lemma \ref{LS} miss the case that $p_1<p_2$, $s>r$ and $\f{1-s}{p_1}\geq \f{1-r}{p_2}$. In this case, $\Q_s^{p_1}(\T)\nsubseteq \Q_r^{p_2}(\T)$, but  any  lacunary Fourier series in $\Q_s^{p_1}(\T)$ must be in $\Q_r^{p_2}(\T)$. Thus,  we are in need to look for another method to determine that $M(\Q_s^{p_1}(\T), \Q_r^{p_2}(\T))$ is trivial in this case. We are going to use the tangential boundary approximation results of Nagel-Rudin-Shapiro \cite{NRS} in order to handle this case.

Given $\zeta \in \T$ and $\alpha >1$,  let
$$
\Gamma_{\alpha}(\zeta)=\{z\in \D: |1-\overline{\zeta}z|<\alpha (1-|z|)\}
$$
be a Stolz angle with vertex at $\zeta$.
If $f\in  H^p$,  then its non-tangential limit  exists almost everywhere. Namely, for almost every $\zeta \in \T$, the limit
$$
f(\zeta):=\lim_{\stackrel{z\in \Gamma_{\alpha}(\zeta)}{z\rightarrow \zeta}} f(z)
$$
exists. In order to handle part (3), for every $\zeta \in \T$, we need to construct a Blaschke sequence $\{a_ k\}$ converging to $\zeta$ in a way that the associated Blaschke product is in $\Q ^{p_ 1}_ s(\D)\setminus \Q^{p_ 2}_ r(\D)$. In view of Theorem \ref{T-Inner},   $\sum_k (1-|a_k|)^s \delta_{a_k}$ must be an $s$-Carleson measure, but $\sum_k (1-|a_k|)^r \delta_{a_k}$ can not be  an $r$-Carleson measure. According to \cite{GPV} this is not possible if the sequence $\{a_ k\}$ converges to $\zeta$ non-tangentially. \\

For $c>0$ and $\delta>1$, consider the region
$$
\Omega_{\delta, c}(\theta)=\left\{re^{i\varphi}\in \D: 1-r>c \left|\sin\f{\varphi-\theta}{2}\right|^\delta\right\}.
$$
 Then $\Omega_{\delta, c}(\theta)$ touches $\T$ at $e^{i\theta}$ tangentially. We say that a function $h$, defined in $\D$, has
 $\Omega_\delta$-limit $L$ at $e^{i\theta}$ if $h(z)\rightarrow L$ as $z\rightarrow e^{i\theta}$ within $\Omega_{\delta, c}(\theta)$ for every $c$.
 A. Nagel, W. Rudin and J. Shapiro \cite{NRS} obtained
 the following result.

\begin{otherth}\label{T-NRS} Suppose $1\leq p<\infty$, $f\in L^p(\T)$, $0<\alpha<1$, and
$$
h(z)=\f{1}{2\pi}\int_{-\pi}^{\pi}\f{f(e^{i\theta})d\theta}{(1-e^{-i\theta}z)^{1-\alpha}}, \ \ z\in \D.
$$
If $\alpha p<1$ and $\delta=1/(1-\alpha p)$, then the  $\Omega_\delta$-limit of $h$ exists almost everywhere on $\T$.
\end{otherth}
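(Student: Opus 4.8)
The plan is to establish existence of the $\Omega_\delta$-limit by the classical two-step scheme for almost-everywhere convergence: a maximal inequality combined with convergence on a dense class, glued together by the usual oscillation estimate. Since the regions $\Omega_{\delta,c}(\theta)$ decrease in $c$ and their union over $c>0$ is all of $\D$, it suffices to fix $c>0$, prove the $\Omega_{\delta,c}$-limit exists a.e., and then intersect the resulting full-measure sets over $c=1/n$. For fixed $c$ I would introduce the tangential maximal operator
\[
\mathcal{M} h(e^{i\beta}) = \sup\{|h(z)| : z\in \Omega_{\delta,c}(\beta)\},
\]
and reduce the theorem to the single bound that $\mathcal M$ is of weak type $(p,p)$, i.e. $|\{\mathcal M h>\lambda\}|\lesssim \lambda^{-p}\|f\|_{L^p(\T)}^p$. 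Writing the kernel $P_\alpha(\theta,z)=(1-e^{-i\theta}z)^{-(1-\alpha)}$, for $z=re^{i\psi}$ one has $|P_\alpha(\theta,z)|\thickapprox((1-r)+|\psi-\theta|)^{-(1-\alpha)}$; because $1-\alpha<1$ this singularity is integrable, so for a trigonometric polynomial (or merely continuous) $f$ the integral defining $h$ converges uniformly as $z\to e^{i\beta}$ inside any $\Omega_{\delta,c}(\beta)$, and the $\Omega_\delta$-limit exists at every point. As trigonometric polynomials are dense in $L^p(\T)$, the weak-type bound upgrades this to a.e. convergence for all $f\in L^p(\T)$ via the standard control of the oscillation $\limsup_{z,z'\to e^{i\beta}}|h(z)-h(z')|\le 2\mathcal M h_b(e^{i\beta})$, applied to a small $L^p$-perturbation $b$.

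The heart of the matter, and the step I expect to be the main obstacle, is the weak-type $(p,p)$ bound for $\mathcal M$. The naive attempt of moving the supremum inside the integral fails badly: it produces the kernel $\sup_{z\in\Omega_{\delta,c}(\beta)}|P_\alpha(\theta,z)|\thickapprox|\theta-\beta|^{-\delta(1-\alpha)}$, whose exponent $\delta(1-\alpha)=\f{1-\alpha}{1-\alpha p}$ exceeds $1$ when $p>1$, so the resulting operator is not even locally integrable. The reason is geometric: inside a tangential region one may point the radius direction $\psi$ all the way toward a far-off concentration of $f$, at distance up to $(1-r)^{1/\delta}\gg (1-r)$ from the vertex $\beta$, while keeping $1-r$ small, and a pointwise kernel bound cannot see this tangential reach. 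Testing on a normalized bump $f\thickapprox\varepsilon^{-1/p}\mathbf 1_{[0,\varepsilon]}$ shows that $\mathcal M h\gtrsim \varepsilon^{\alpha-1/p}$ on a set of measure $\thickapprox \varepsilon^{1-\alpha p}=\varepsilon^{1/\delta}$, which exactly saturates the weak-type $(p,p)$ inequality; thus $\delta=1/(1-\alpha p)$ is sharp and no estimate into a smaller Lorentz space can hold.

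To actually prove the weak-type estimate I would argue by a covering/stopping-time scheme adapted to the tangential footprints rather than by pointwise domination. After reducing to $f\ge 0$, for each $e^{i\beta}$ in the level set select $z(\beta)=r(\beta)e^{i\psi(\beta)}\in\Omega_{\delta,c}(\beta)$ with $|h(z(\beta))|>\lambda$, and record the scale $t(\beta)=1-r(\beta)$ together with its footprint, an arc about $e^{i\beta}$ of length $\thickapprox t(\beta)^{1/\delta}$ over which the kernel has size $\thickapprox t(\beta)^{-(1-\alpha)}$. Splitting $h(z(\beta))=\f{1}{2\pi}\int P_\alpha(\theta,z(\beta))f(\theta)\,d\theta$ into its contribution from the footprint and from its complement, the complementary part is dominated by the fractional integral $\int|\theta-\beta|^{-(1-\alpha)}f(\theta)\,d\theta$ and handled by Hardy–Littlewood–Sobolev (valid since $\alpha p<1$), while from the local part, by H\"older's inequality and the defining balance $\delta(1-\alpha p)=1$, one extracts a lower bound on the $L^p$-mass of $f$ over each selected footprint. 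A Vitali-type selection among the footprints then converts these local mass bounds into the measure estimate $|\{\mathcal M h>\lambda\}|\lesssim \lambda^{-p}\|f\|_{L^p}^p$. Carrying out this covering argument while keeping the tangential geometry and the endpoint balance under control is the genuinely delicate part; the remaining density machinery of the first paragraph is then routine.
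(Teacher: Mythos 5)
The first thing you should know is that the paper contains no proof of this statement: it is Theorem \ref{T-NRS}, quoted verbatim from Nagel--Rudin--Shapiro \cite{NRS} and used as a black box in the proof of part (3) of Theorem \ref{mt1}, so there is no internal argument to compare yours against. Judged on its own terms, your outline is essentially correct, and it follows the classical scheme by which results of this type are proved: reduce to a fixed $c$ (legitimate, since $\Omega_{\delta,c}(\theta)$ decreases in $c$, every region contains the radius, and one may intersect the exceptional sets over $c=1/n$), prove a weak-type inequality for the tangential maximal operator, and conclude by density plus the oscillation trick. Your diagnosis of why pointwise kernel domination fails (the sup-kernel exponent $\delta(1-\alpha)=\frac{1-\alpha}{1-\alpha p}>1$ when $p>1$) is correct, and so is the heart of your covering argument: since $\alpha p<1$ one has $(1-\alpha)\frac{p}{p-1}>1$, so H\"older on the footprint $I_\beta$ of length $\approx t^{1/\delta}$ bounds the near part by $\|f\|_{L^p(I_\beta)}\,t^{\alpha-1/p}$, whence $\int_{I_\beta}|f|^p\gtrsim \lambda^p t^{1-\alpha p}=\lambda^p t^{1/\delta}\approx\lambda^p|I_\beta|$ on each selected arc, and a Vitali selection yields $|\{\mathcal{M}h>\lambda\}|\lesssim\lambda^{-p}\|f\|_{L^p(\T)}^p$.

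Three refinements you should note. First, the dense-class step is simpler than you make it: expanding $(1-e^{-i\theta}z)^{-(1-\alpha)}=\sum_{n\geq 0}A_nz^ne^{-in\theta}$ shows that $h$ is a polynomial in $z$ whenever $f$ is a trigonometric polynomial, so unrestricted boundary limits exist trivially on a dense class. Second, there is a small imprecision in the far part: Hardy--Littlewood--Sobolev controls $\int_\T |f(e^{i\theta})|\,|\theta-\beta|^{-(1-\alpha)}d\theta$ in weak $L^q$ with $\frac{1}{q}=\frac{1}{p}-\alpha$, not in weak $L^p$; this is harmless, both because $\T$ has finite measure (after normalizing $\|f\|_{L^p(\T)}=1$, the bound $\min\bigl(2\pi, C\lambda^{-q}\bigr)\lesssim\lambda^{-p}$ holds for all $\lambda>0$) and, more simply, because your density argument only needs $|\{\mathcal{M}h_b>\lambda\}|\to 0$ as $\|b\|_{L^p(\T)}\to 0$ for each fixed $\lambda$, which the weak $(p,q)$ bound already gives. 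Third, a caveat on your sharpness remark: saturating the weak-type inequality shows your estimate cannot be improved, but it does not by itself show that $\Omega_{\delta'}$-limits fail almost everywhere for $\delta'>\delta$; that negative statement requires a separate construction (carried out in \cite{NRS}) and is not needed for the theorem as stated.
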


For $\beta \in \R$ and $0<p<\infty$, the Hardy-Sobolev space $H^p_\beta$ consists of analytic functions $f$ in $\D$ such that $D^\beta f\in H^p$, where
$f(z)=\sum_{k=0}^\infty a_k z^k$ is the Taylor expansion of $f$ and
$$
D^\beta f(z)=\sum_{k=0}^\infty (1+k)^\beta a_k z^k.
$$

\begin{prop} \label{P-TA}
Let $1<p<\infty$ and $0<s<t<1$. Suppose $h\in B_p(s)$.
Then the  $\Omega_{1/t}$-limit of $h$ exists almost everywhere on $\T$.
\end{prop}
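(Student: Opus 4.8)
The plan is to realize every $h\in B_p(s)$ as a fractional integral of an $L^p(\T)$ density and then invoke the Nagel--Rudin--Shapiro theorem (Theorem \ref{T-NRS}). Set $\alpha=\frac{1-t}{p}$. Since $0<t<1$ and $p>1$ we have $0<\alpha<\frac1p<1$ and $\alpha p=1-t\in(0,1)$, so the hypotheses $0<\alpha<1$ and $\alpha p<1$ of Theorem \ref{T-NRS} hold; moreover $\delta=1/(1-\alpha p)=1/t$, which is exactly the region in the statement. Thus it suffices to produce $f\in L^p(\T)$ with $h(z)=\frac{1}{2\pi}\int_{-\pi}^{\pi}\frac{f(e^{i\theta})}{(1-e^{-i\theta}z)^{1-\alpha}}\,d\theta$, after which Theorem \ref{T-NRS} yields the $\Omega_{1/t}$--limit of $h$ almost everywhere.

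Expanding the kernel, the coefficient of $z^k$ in $(1-e^{-i\theta}z)^{-(1-\alpha)}$ equals $\frac{\Gamma(k+1-\alpha)}{\Gamma(1-\alpha)\,k!}\,e^{-ik\theta}$, which is comparable to $(1+k)^{-\alpha}e^{-ik\theta}$, so the integral operator sends $f$ to the analytic function whose $k$--th Taylor coefficient is comparable to $(1+k)^{-\alpha}$ times the $k$--th Fourier coefficient of $f$. Consequently $h$ admits such a representation with $f\in L^p(\T)$ if and only if $D^{\alpha}h\in H^p$, i.e. $h\in H^p_{\alpha}$: one direction takes $f=D^\alpha h\in H^p\subset L^p(\T)$; the other uses the M. Riesz projection theorem (valid since $1<p<\infty$) together with the fact that $(1+k)^{\alpha}\,\frac{\Gamma(k+1-\alpha)}{\Gamma(1-\alpha)\,k!}$ is a smooth coefficient multiplier preserving $H^p$. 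Low--order Taylor terms are harmless, being continuous on $\overline{\D}$. Hence the proposition reduces to the embedding $B_p(s)\subseteq H^p_{(1-t)/p}$.

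To prove this embedding I would pass both conditions through fractional derivatives. By the Littlewood--Paley / area--function characterization of $H^p_\alpha$, valid for all $1<p<\infty$, membership $h\in H^p_{(1-t)/p}$ amounts to the finiteness of an $L^p(\T)$ norm of a square area integral of $D^{1+\alpha}h$, whereas $B_p(s)$ is governed by the plain weighted integral $\int_{\D}|h'(z)|^p(1-|z|^2)^{p-2+s}\,dA(z)$. In the language of analytic Besov and Triebel--Lizorkin spaces on $\D$ these read $B_p(s)=B^{(1-s)/p}_{p,p}$ and $H^p_\alpha=F^{\alpha}_{p,2}$, and the inclusion I want is $B^{(1-s)/p}_{p,p}\hookrightarrow F^{(1-t)/p}_{p,2}$. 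For $1<p\le2$ this holds even when $s=t$, through $B^{\sigma}_{p,p}\hookrightarrow B^{\sigma}_{p,2}\hookrightarrow F^{\sigma}_{p,2}$. For $p>2$ the fine index $p$ exceeds $2$ and no same--smoothness embedding into $F^{\sigma}_{p,2}$ is available; here the strict gap $s<t$ is essential, since it permits first dropping smoothness, $B^{(1-s)/p}_{p,p}\hookrightarrow B^{(1-t)/p}_{p,2}$ (valid because $(1-t)/p<(1-s)/p$), and then embedding $B^{(1-t)/p}_{p,2}\hookrightarrow F^{(1-t)/p}_{p,2}=H^p_{(1-t)/p}$.

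I expect the main obstacle to be precisely this embedding when $p>2$, where the naive weighted--integral comparison fails and the smoothness gap $t-s>0$ must be converted into genuine control. Concretely, I would estimate the area integral of $D^{1+\alpha}h$ against $|h'|$ via a fractional--integration inequality, using the extra factor $(1-|z|^2)^{t-s}$ furnished by $s<t$ to absorb the loss incurred in passing from the $\ell^2$ radial average defining the area function to the $\ell^p$ radial average controlled by $\|h\|_{B_p(s)}$; Minkowski's and H\"older's inequalities together with the reproducing--kernel estimates for $(1-\overline a z)^{-c}$ (as used in the proof of part (1)) should make this quantitative. Once $B_p(s)\subseteq H^p_{(1-t)/p}$ is in hand, the representation above and Theorem \ref{T-NRS} complete the proof.
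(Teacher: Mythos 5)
Your skeleton coincides with the paper's: both verify that $\alpha=(1-t)/p$ meets the hypotheses of Theorem \ref{T-NRS} with $\delta=1/(1-\alpha p)=1/t$, and both reduce the proposition to showing $h\in H^p_{(1-t)/p}$, i.e.\ that $h$ is the fractional integral of an $L^p(\T)$ density (your M.~Riesz/coefficient-multiplier justification of this equivalence is a detail the paper leaves implicit). Where you genuinely differ is the key embedding $B_p(s)\subseteq H^p_{(1-t)/p}$. The paper gets it from two concrete citations: Zhu's norm equivalence \cite[Theorem 2.19]{ZhuBn} gives $D^{(1-t)/p}h\in B_p(1+s-t)$, and since $s<t$ forces $1+s-t<1$, the embedding $B_p(1+s-t)\subseteq H^p$ of \cite[Lemma 2.4]{ABP} finishes. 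You instead invoke the abstract Besov/Triebel--Lizorkin scale: $B_p(s)=B^{(1-s)/p}_{p,p}$, $H^p_\alpha=F^{\alpha}_{p,2}$, drop smoothness using $s<t$, and land in $F^{(1-t)/p}_{p,2}$. These are the same mechanism in different clothing --- the paper's final step is precisely the positive-smoothness-gap embedding $B^{(t-s)/p}_{p,p}\hookrightarrow F^{0}_{p,2}=H^p$, located at smoothness zero after applying the full fractional derivative --- but the paper's route is self-contained at the level of weighted area integrals of analytic functions, whereas yours presupposes the identification of the analytic spaces on $\D$ with the classical $B$/$F$ scales on $\T$. A merit of your formulation is that it isolates exactly why the strict gap $s<t$ is needed only for $p>2$, which matches the paper's remark (citing \cite{GP}) that one may take $t=s$ when $p\le 2$.

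Two caveats. First, for $1<p<2$ your intermediate claim $B^{\sigma}_{p,2}\hookrightarrow F^{\sigma}_{p,2}$ is backwards: the standard embeddings are $B^{\sigma}_{p,\min(p,2)}\hookrightarrow F^{\sigma}_{p,2}\hookrightarrow B^{\sigma}_{p,\max(p,2)}$, so for $p<2$ the inclusion between $B^{\sigma}_{p,2}$ and $F^{\sigma}_{p,2}$ goes the other way; however, the embedding you actually need, $B^{\sigma}_{p,p}\hookrightarrow F^{\sigma}_{p,2}$, is exactly the left half of the standard result, so your conclusion for $p\le 2$ survives the slip. Second, your final paragraph treats the $p>2$ case as an open obstacle requiring a further quantitative fractional-integration estimate; it is not --- the chain in your third paragraph ($B^{(1-s)/p}_{p,p}\hookrightarrow B^{(1-t)/p}_{p,2}$ by the smoothness drop, then $B^{(1-t)/p}_{p,2}\hookrightarrow F^{(1-t)/p}_{p,2}$, valid since $\min(p,2)=2$ for $p\ge 2$) is already a complete argument, granted the standard theory you cite, so the hedging is unnecessary.
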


\begin{proof} By Theorem \ref{T-NRS}, it is enough to show that there exists $f\in L^p(\T)$ with
$$
h(z)=\f{1}{2\pi}\int_{-\pi}^{\pi}\f{f(e^{i\theta})d\theta}{(1-e^{-i\theta}z)^{1-\f{1-t}{p}}}, \qquad z\in \D.
$$
Thus we only need to prove that $h\in H^p_{\f{1-t}{p}}$. Note that \cite[Theorem 2.19]{ZhuBn}
$$
\int_{\D} \left|D^{1+\f{1-t}{p}}h(z)\right|^p (1-|z|)^{p-1+s-t}dA(z)
\thickapprox \int_{\D} \left|h'(z)\right|^p (1-|z|)^{p-2+s}dA(z)<\infty.
$$
Thus $D^{\f{1-t}{p}}h \in B_p(1+s-t)$. Since $s<t$, then $1+s-t<1$ and therefore  $B_p(1+s-t)\subseteq H^p$ (see \cite[Lemma 2.4]{ABP} for example). Hence we get  $h\in H^p_{\f{1-t}{p}}$. The proof is complete.
\end{proof}

In case that $p\le 2$, it is known \cite{GP} that one can take $t=s$ in Proposition \ref{P-TA}. The following construction will be a key for the proof of part (3).

\begin{lemma} \label{KC} Let $0<r<s<1$ and $0<t<1$.  For every $e^{i\theta}\in \T$, there exists a Blaschke  sequence $\{a_k\}_{k=1}^\infty$ satisfying the following conditions.
\begin{enumerate}
\item [(a)] $e^{i\theta}$ is the unique accumulation point of $\{a_k\}$.
\item [(b)] $\{a_k\}\subseteq \Omega_{1/t, c}(\theta)$ for some $c>0$.
\item [(c)] $\sum_k (1-|a_k|)^s \delta_{a_k}$ is an $s$-Carleson measure.
\item [(d)] $\sum_k (1-|a_k|)^r \delta_{a_k}$ is not an $r$-Carleson measure.
\end{enumerate}
\end{lemma}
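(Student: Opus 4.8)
The plan is to reduce to the case $\theta=0$ by rotational symmetry, since both the regions $\Omega_{1/t,c}$ and the Carleson sectors $S(I)$ transform covariantly under rotation, and then to build $\{a_k\}$ as a union of finite \emph{generations} marching radially toward $1$. For each $n\ge 1$ I would place the points of the $n$-th generation at the fixed modulus $1-d_n$, with $d_n=2^{-n}$, spread at equal angular gaps $g_n$ over an arc of angular length $L_n=c\,d_n^{t}$ centered at $1$; the number of points is $M_n=\lfloor (L_n/d_n)^{s}\rfloor=\lfloor 2^{ns(1-t)}\rfloor$, so that $g_n=L_n/M_n=2^{-n(t+s-st)}$. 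Choosing the constant $c$ small enough, every point $a=(1-d_n)e^{i\varphi}$ with $|\varphi|\le L_n/2$ satisfies the defining inequality $d_n>c_0|\sin(\varphi/2)|^{1/t}$ of $\Omega_{1/t,c_0}(0)$ for a suitable $c_0>0$, which is (b). Since $L_n\to 0$ and $d_n\to 0$ while all generations sit near angle $0$, the only accumulation point is $1$, giving (a); and the Blaschke condition follows from $\sum_n M_n d_n\approx\sum_n 2^{-n(1-s(1-t))}<\infty$, where convergence uses $s(1-t)<1$.

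For the failure of the $r$-Carleson condition (d), I would test against the arcs $I_n$ centered at $1$ with $|I_n|=L_n$. Since $d_n<L_n$, the sector $S(I_n)$ captures all $M_n$ points of the $n$-th generation, so
\[
\frac{1}{|I_n|^r}\sum_{a_k\in S(I_n)}(1-|a_k|)^r\;\ge\;\frac{M_n\,d_n^{r}}{L_n^{r}}\;\approx\;2^{n(s-r)(1-t)},
\]
which tends to $\infty$ because $s>r$ and $t<1$. The same computation with exponent $s$ in place of $r$ gives $M_n d_n^s\approx L_n^s$, so each $I_n$ is only \emph{critical}, not bad, for the $s$-measure; this numerical balance is exactly where $r<s$ and the tangential gain $L_n/d_n=d_n^{t-1}\to\infty$ are used, and it is what lets (c) and (d) hold at once.

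The main obstacle is verifying (c), that $\mu_s=\sum_k(1-|a_k|)^s\delta_{a_k}$ is an $s$-Carleson measure for \emph{every} arc $I$, not just the critical ones. Fixing an arc $I$ of length $\ell$ (only those near $1$ meet the support), I would estimate $\mu_s(S(I))$ generation by generation, splitting the scales by how $d_n$ and $L_n$ compare to $\ell$. Generations with $d_n>\ell$ lie below $S(I)$ and contribute nothing. Generations with $L_n\le\ell$ are captured entirely, contributing $M_n d_n^s\approx L_n^s=2^{-nst}$, and summing this geometric series over $n\gtrsim\frac1t\log_2\frac1\ell$ gives $\approx\ell^s$. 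The delicate range is the partially captured one, $d_n\le\ell<L_n$, where $S(I)$ meets only about $1+\ell/g_n$ points of the generation; bounding the contribution by $\sum_n(1+\ell/g_n)d_n^s$ splits into a decreasing piece dominated by its smallest index, where $d_n\approx\ell$, and an increasing piece $\ell\sum_n 2^{nt(1-s)}$ dominated by its largest index, where $L_n\approx\ell$, each summing back to $\lesssim\ell^s$. Collecting the three ranges yields $\mu_s(S(I))\lesssim\ell^s$ uniformly, which is (c). I expect the bookkeeping of these multi-scale geometric sums—especially checking that the increasing sum in the partial range is controlled at the endpoint $L_n\approx\ell$ and telescopes back to exactly $\ell^s$—to be the only genuinely technical step.
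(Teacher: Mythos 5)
Your construction is correct, but it is genuinely different from the paper's. The paper takes a single tangentially approaching sequence, one point per scale, namely $a_k=\bigl(1-k^{-1/\varepsilon}\bigr)e^{i(\theta+k^{-t/\varepsilon})}$, with the auxiliary exponent $\varepsilon$ chosen in the window $r(1-t)<\varepsilon<\min\{r,s(1-t)\}$ (nonempty precisely because $r<s$); conditions (c) and (d) then reduce to the one-line comparisons $\sum_{k\geq (2/|I|)^{\varepsilon/t}}k^{-s/\varepsilon}\approx |I|^{(s-\varepsilon)/t}\lesssim |I|^{s}$ and $|I|^{-r}\sum_{k\geq (2/|I|)^{\varepsilon/t}}k^{-r/\varepsilon}\approx |I|^{(r-\varepsilon)/t-r}\to\infty$, and the $s$-Carleson bound is only verified on arcs centered at $e^{i\theta}$, the reduction to such arcs being left implicit. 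Your dyadic-generation construction instead places $M_n\approx 2^{ns(1-t)}$ equally spaced points on the circle of radius $1-2^{-n}$ inside a tangential window of length $\approx 2^{-nt}$, so that each generation exactly saturates the $s$-Carleson budget at its own scale $L_n$; this makes the mechanism transparent (the $r$-mass of the critical arcs is then forced to grow like $2^{n(s-r)(1-t)}$), and your three-range estimate for (c) honestly treats arbitrary arcs rather than only centered ones, at the cost of more multi-scale bookkeeping. Your supporting computations check out: (b) holds after shrinking the aperture constant since $|\sin(\varphi/2)|^{1/t}\lesssim c^{1/t}d_n$ on each window, the Blaschke condition is $\sum_n M_n 2^{-n}<\infty$ because $s(1-t)<1$, and the two partial-range sums close up as claimed, $\sum_{d_n\leq \ell}d_n^{s}\approx \ell^{s}$ and $\ell\sum_{L_n\geq \ell}2^{nt(1-s)}\approx \ell\cdot(1/\ell)^{1-s}=\ell^{s}$. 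In short: the paper's proof is shorter and its exponent window does all the work invisibly; yours is longer but structurally clearer and proves the Carleson bound for all arcs without appealing to a centering reduction.
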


\begin{proof}Set
$$
a_k=\left(1-k^{-\f{1}{\varepsilon}}\right)e^{i\theta_k}, \ \ k=1, 2, \cdots,
$$
where $\varepsilon>0$ is taken so that
$$
r(1-t)<\varepsilon< \min \{r, s(1-t)\}
$$
and
$$
\theta_k=k^{-\f{t}{\varepsilon}}+\theta.
$$
Clearly, $\{a_ k\}$ is a Blaschke sequence and $e^{i\theta}$ is its unique accumulation point with $\{a_k\}\subseteq \Omega_{1/t, c}(\theta)$ for some $c>0$.
 To prove that $\sum_k (1-|a_k|)^s \delta_{a_k}$ is an $s$-Carleson measure,
it is enough to consider sufficiently small arcs  $I\subseteq \T$ centered at $e^{i\theta}$. Since  $\varepsilon<  s(1-t)<s$, we deduce that
\begin{eqnarray*}
\sum_{a_k\in S(I)}(1-|a_k|)^s
&\thickapprox&\sum_{|\theta_k-\theta|\leq\f{|I|}{2}}k^{-\f{s}{\varepsilon}}
\thickapprox\sum_{k\geq \left(\f{2}{|I|}\right)^{\f{\varepsilon}{t}}}k^{-\f{s}{\varepsilon}}\\
&\thickapprox&\int_{\left(\f{2}{|I|}\right)^{\f{\varepsilon}{t}}}^\infty x^{-\f{s}{\varepsilon}}dx
\thickapprox |I|^{\f{s-\varepsilon}{t}}
\lesssim |I|^s,
\end{eqnarray*}
which gives that  $\sum_k (1-|a_k|)^s \delta_{a_k}$ is an $s$-Carleson measure. On the other hand,
for $r(1-t)<\varepsilon< r$, one gets
$$
\sum_{a_k\in S(I)}\f{(1-|a_k|)^r}{|I|^r}\thickapprox \f{1}{|I|^r}\int_{\left(\f{2}{|I|}\right)^{\f{\varepsilon}{t}}}^\infty x^{-\f{r}{\varepsilon}}dx
\thickapprox|I|^{\f{r-\varepsilon}{t}-r}\rightarrow \infty, \ \text{as} \  |I|\rightarrow 0.
$$
Thus $\sum_k (1-|a_k|)^r \delta_{a_k}$ is not an $r$-Carleson measure. The proof is complete.
\end{proof}
\mbox{}
\\
We also need the following estimate.

\begin{lemma} \label{LIn} Let $1<q<\infty$ and $0<r<1$. Let $f\in L^q(\T)$ and $S$  an inner function.  For $a\in \D$,
\begin{eqnarray*}
 J(a)&:=& \int_\D \widehat{|f|^q}(z)\,(1-|S(z)|)^q \,(1-|z|)^{-2} \left(1-|\sigma_a(z)|\right)^r \,  dA(z)\\
&\lesssim&
\int_\T\int_\T\f{|f(\zeta)|^q \, |S(\zeta)-S(\eta))|^q}{|\zeta-\eta|^{2-r}}\left(\f{1-|a|^2}{|\zeta-a||\eta-a|}\right)^r\, |d\zeta|\,|d\eta|.
\end{eqnarray*}
\end{lemma}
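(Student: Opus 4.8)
The plan is to control the two factors $\widehat{|f|^q}(z)$ and $(1-|S(z)|)^q$ by Poisson averages over $\T$, to match up their integration variables, and thereby to reduce $J(a)$ to a single integral estimate over $\D$. Since $S$ is inner, $|S(\zeta)|=1$ for a.e. $\zeta\in\T$, so the reverse triangle inequality gives $1-|S(z)|\leq |S(\zeta)-S(z)|$ for a.e. $\zeta\in\T$ and every $z\in\D$. As $S\in H^\infty\subseteq H^1$, its Poisson extension recovers $S$, whence
\[
S(\zeta)-S(z)=\f{1}{2\pi}\int_\T \f{1-|z|^2}{|\eta-z|^2}\,(S(\zeta)-S(\eta))\,|d\eta| .
\]
Because $q>1$ and the Poisson kernel has unit mass, Jensen's inequality yields, for a.e. $\zeta\in\T$,
\[
(1-|S(z)|)^q\leq |S(\zeta)-S(z)|^q\lesssim \int_\T \f{1-|z|^2}{|\eta-z|^2}\,|S(\zeta)-S(\eta)|^q\,|d\eta|.
\]

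Writing $\widehat{|f|^q}(z)=\f{1}{2\pi}\int_\T \f{1-|z|^2}{|\zeta-z|^2}\,|f(\zeta)|^q\,|d\zeta|$ and inserting the previous inequality \emph{with the same boundary variable} $\zeta$, I obtain
\[
\widehat{|f|^q}(z)\,(1-|S(z)|)^q\lesssim \int_\T\!\int_\T \f{(1-|z|^2)^2}{|\zeta-z|^2|\eta-z|^2}\,|f(\zeta)|^q\,|S(\zeta)-S(\eta)|^q\,|d\zeta||d\eta|.
\]
Multiplying by $(1-|z|)^{-2}(1-|\sigma_a(z)|)^r$, integrating over $\D$, and interchanging the order of integration by Tonelli's theorem (all integrands being nonnegative), the lemma follows once I establish
\[
\int_\D \f{(1-|z|^2)^2\,(1-|z|)^{-2}\,(1-|\sigma_a(z)|)^r}{|\zeta-z|^2|\eta-z|^2}\,dA(z)\lesssim \f{1}{|\zeta-\eta|^{2-r}}\left(\f{1-|a|^2}{|\zeta-a||\eta-a|}\right)^r
\]
uniformly in $\zeta,\eta\in\T$ and $a\in\D$. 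Here the weight $(1-|z|)^{-2}$ is precisely tuned: since $(1-|z|^2)^2(1-|z|)^{-2}\thickapprox 1$, it absorbs the excess decay of the two Poisson kernels and leaves the cleaner integral $\int_\D (1-|\sigma_a(z)|)^r|\zeta-z|^{-2}|\eta-z|^{-2}\,dA(z)$.

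The main obstacle is this last integral estimate. I would attack it through the M\"obius change of variables $z=\sigma_a(w)$, using $\sigma_a\circ\sigma_a=\mathrm{id}$ (so $1-|\sigma_a(z)|=1-|w|$) together with the standard identities $dA(z)=(1-|a|^2)^2|1-\bar a w|^{-4}dA(w)$ and $|\zeta-\sigma_a(w)|=|\zeta-a|\,|\zeta_a-w|\,|1-\bar a w|^{-1}$, where $\zeta_a=\sigma_a(\zeta)$ and $\eta_a=\sigma_a(\eta)$ again lie on $\T$. All powers of $|1-\bar a w|$ then cancel, reducing matters to the base case $a=0$,
\[
\int_\D \f{(1-|w|^2)^r}{|\zeta_a-w|^2|\eta_a-w|^2}\,dA(w)\lesssim \f{1}{|\zeta_a-\eta_a|^{2-r}},
\]
a two--boundary--pole estimate that I would prove by a dyadic decomposition of $\D$ into the regions where $|\zeta_a-w|\thickapprox 2^k|\zeta_a-\eta_a|$: on each such region both kernels are comparable to $2^k|\zeta_a-\eta_a|$ and $(1-|w|^2)^r\lesssim (2^k|\zeta_a-\eta_a|)^r$, so summing the resulting geometric series gives $|\zeta_a-\eta_a|^{r-2}$. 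Feeding back the distortion identity $|\zeta_a-\eta_a|=(1-|a|^2)|\zeta-\eta|\,|1-\bar a\zeta|^{-1}|1-\bar a\eta|^{-1}$ together with $|1-\bar a\zeta|=|\zeta-a|$ recovers exactly the asserted right-hand side. The genuinely delicate point is the two-pole base estimate; the remaining steps are bookkeeping with M\"obius identities.
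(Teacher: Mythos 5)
Your proof has the same skeleton as the paper's: the reverse triangle inequality $1-|S(z)|\le |S(\zeta)-S(z)|$ for a.e.\ $\zeta\in\T$ (valid because $S$ is inner), the Poisson representation of $S$ combined with Jensen's inequality to get $|S(\zeta)-S(z)|^q\lesssim \int_\T \f{1-|z|^2}{|\eta-z|^2}|S(\zeta)-S(\eta)|^q|d\eta|$, Fubini/Tonelli, and finally the two-boundary-pole kernel estimate. The only structural difference is bookkeeping: the paper proves the case $a=0$ first and then obtains general $a$ by replacing $f$, $S$ with $f\circ\sigma_a$, $S\circ\sigma_a$ and changing variables, whereas you keep $a$ general and push the M\"obius change of variables $z=\sigma_a(w)$ into the kernel integral; your distortion identities are correct, and the two computations are the same. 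The paper, however, does not prove the key estimate
\[
\int_\D \f{(1-|w|^2)^r}{|\zeta'-w|^2|\eta'-w|^2}\,dA(w)\lesssim \f{1}{|\zeta'-\eta'|^{2-r}},\qquad \zeta',\eta'\in\T,
\]
but cites it from Ortega--F\`abrega; you attempt a self-contained proof, and that is where your argument has a genuine flaw.

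Writing $d=|\zeta'-\eta'|$, your claim that on each dyadic region $|\zeta'-w|\thickapprox 2^kd$ ``both kernels are comparable to $2^kd$'' is false on the regions within distance $d$ of the pole $\zeta'$ (i.e.\ $k<0$): there $|w-\eta'|\thickapprox d$, not $2^kd$. These regions cannot be discarded, since they cover a neighborhood of $\zeta'$ whose contribution is of the same order $d^{r-2}$ as the whole integral. Worse, if your comparability claim were applied to all $k$, the $k$-th region would contribute $(2^kd)^2\cdot(2^kd)^{r-4}=(2^kd)^{r-2}$, and $\sum_{k<0}(2^kd)^{r-2}$ diverges because $r<2$; so the geometric series as you describe it does not close. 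The repair is standard and uses exactly the hypothesis $r>0$ that your sketch never invokes: on $\{|w-\zeta'|\le d/2\}$ one has $|w-\eta'|\ge d/2$ and $1-|w|^2\le 2|w-\zeta'|$, so the integrand is $\lesssim |w-\zeta'|^{r-2}d^{-2}$, which is integrable near $\zeta'$ precisely because $r>0$ and integrates to $\thickapprox d^{r-2}$; symmetrically near $\eta'$; and only on the regions $|w-\zeta'|\thickapprox 2^kd$ with $k\gtrsim 0$ are both kernels comparable, giving the convergent series $\sum_{k\ge 0}(2^kd)^{r-2}\thickapprox d^{r-2}$ since $r<2$. With this correction your proof is complete and equivalent to the paper's.
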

\begin{proof} We first consider the case  $a=0$. Using Fubini's Theorem, we see that
\begin{eqnarray*}
J(0)
&=& \int_{\T}|f(\zeta)|^q \left(\int_\D\f{(1-|S(z)|)^q}{|\zeta-z|^2}(1-|z|^2)^{r-1}dA(z)\right) |d\zeta|\\
&\leq& \int_{\T}|f(\zeta)|^q \left(\int_\D\f{|S(\zeta)-S(z)|^q}{|\zeta-z|^2}(1-|z|^2)^{r-1}dA(z)\right) |d\zeta|.
\end{eqnarray*}
Note that
\begin{eqnarray*}
 |S(\zeta)-S(z)|^q
&\leq& \int_{\T}\left|S(\zeta)-S(\eta)\right|^q \f{1-|z|^2}{|\eta-z|^2}|d\eta|.
\end{eqnarray*}
Consequently, by the estimate \cite{OF},
\[
\int_\D\f{(1-|z|^2)^r}{|\zeta-z|^2\, |\eta-z|^2}\, dA(z) \lesssim \frac{1}{|\zeta-\eta|^{2-r}},
\]
we have
\begin{eqnarray*}
J(0)
&\le & \int_{\T}\int_{\T}\left(\int_\D\f{(1-|z|^2)^r}{|\zeta-z|^2\, |\eta-z|^2}\, dA(z)\right)|f(\zeta)|^q \,|S(\zeta)-S(\eta)|^q \, |d\eta|\, |d\zeta|\\
&\lesssim& \int_{\T}\int_{\T}\f{|f(\zeta)|^q \, |S(\zeta)-S(\eta)|^q}{|\zeta-\eta|^{2-r}} \, |d\eta|\, |d\zeta|.
\end{eqnarray*}
That is,
$$
\int_\D \widehat{|f|^q}(z)(1-|S(z)|)^q (1-|z|^2)^{r-2}dA(z)\lesssim
\int_{\T}\int_{\T}\f{|f(\zeta)|^q \, |S(\zeta)-S(\eta)|^q}{|\zeta-\eta|^{2-r}}\,  |d\eta|\, |d\zeta|.
$$
Replacing $f$ and $S$ by $f\circ \sigma_a$ and $S\circ \sigma_a$ in the above inequality  respectively and changing the variables, the desired result follows.
\end{proof}

\subsection{Proof of part (3)}
If $s>r$, for any $e^{i\theta}\in \T$,  we take the sequence $\{a_k\}_{k=1}^\infty\subseteq \D$ constructed in Lemma \ref{KC}, and let $B$ be the corresponding Blaschke product. Then Theorem \ref{T-Inner}  shows that $B\in \Q_s^{p_1}(\T)$.
If $f\in  M(\Q_s^{p_1}(\T), \Q_r^{p_2}(\T))$ then $fB \in \Q_r^{p_2}(\T)$ and
\[
\begin{split}
\sup_{a\in \D}&\int_\T\int_\T\f{|f(\zeta)(B(\zeta)-B(\eta))|^{p_2}}{|\zeta-\eta|^{2-r}}\left(\f{1-|a|^2}{|\zeta-a||\eta-a|}\right)^r|d\zeta||d\eta|\\
\\
&\lesssim \| fB\|_{\Q_r^{p_2}(\T)}^{p_2}+\|f\|_{\Q_r^{p_2}(\T)}^{p_2}<\infty.
\end{split}
\]
Then Lemma \ref{LIn} gives
$$
\sup_{a\in \D}\int_\D \widehat{|f|^{p_2}}(z)(1-|B(z)|)^{p_2} (1-|z|)^{-2} \left(1-|\sigma_a(z)|\right)^r dA(z)<\infty.
$$
Since the sequence $\{a_ k\}$ is Carleson-Newman, that is
$$
\sup_{a\in\D}\sum_{k=1}^\infty \left(1-|\sigma_a(a_k)|^2\right)<\infty,
$$
we have (see \cite[p. 1292]{ABP} for example) that
$$
(1-|B(z)|)^{p_ 2}\gtrsim \left (\sum_{k=1}^\infty \big (1-|\sigma_{a_k}(z)|^2 \big ) \right )^{p_ 2}\ge \sum_{k=1}^\infty \left (1-|\sigma_{a_k}(z)|^2\right)^{p_ 2}, \qquad z\in \D.
$$
This gives
\begin{equation}\label{EqS4}
\sup_{a\in \D} \sum_{k=1}^\infty  \int_{E(a_k)}\widehat{|f|^{p_2}}(z) \left(1-|\sigma_{a_k}(z)|^2\right)^{p_2} (1-|z|)^{-2} \left(1-|\sigma_a(z)|\right)^r \, dA(z)<\infty,
\end{equation}
where
$
E(a_k)=\{w\in \D:  |\sigma_{a_k}(w)|<1/2\}
$
is a pseudo-hyperbolic disk centered at $a_ k$.
It is well known that
$$
(1-|z|^2)^2\thickapprox |1-\overline{a_k}z|^2\thickapprox (1-|a_k|^2)^2
$$
for all $z\in E(a_k)$. Furthermore, by \cite[Lemma 4.30]{Zhu2},
$$
1-|\sigma_a(z)|^2\thickapprox 1-|\sigma_a(a_k)|^2
$$
for all $a\in \D$ and $z\in E(a_k)$. This, \eqref{EqS4} and subharmonicity yield
\[
\sup_{a\in \D} \sum_{k=1}^\infty (\widehat{|f|}(a_k))^{p_2}(1-|\sigma_a(a_k)|)^r<\infty.
\]
Since
$$
\sup_{a\in \D} \sum_{k=1}^\infty (1-|\sigma_a(a_k)|)^r=\infty,
$$
this forces $\widehat{|f|}(a_k) \rightarrow 0$.
 Note that $|f|\in \Q_r^{p_2}(\T)$ and $\lim_{k\rightarrow \infty}a_k=e^{i\theta}$. Then $g=\widehat{|f|}+i\widetilde{|f|} \in \Q_r^{p_2}(\D)\subset B_{p_2}(r)$ because of Theorem \ref{t3}, since the Cauchy-Riemann equations give $|g'(z)|\thickapprox |\nabla (\widehat{|f|})(z)|$. Take $t>r$ in Lemma \ref{KC}, and apply Proposition \ref{P-TA}, to get
 $$
 (\widehat{|f|}+i\widetilde{|f|})(e^{i\theta})=\lim_{k\rightarrow \infty}(\widehat{|f|}+i\widetilde{|f|})(a_k)
 $$
 for almost every $e^{i\theta}\in \T$. This implies
  $$
 \widehat{|f|}(e^{i\theta})=\lim_{k\rightarrow \infty} \widehat{|f|}(a_k)=0 \ \ a.e. \ \ e^{i\theta}\in \T.
 $$
 If we take the normalized  $\widetilde{|f|}$ with $\widetilde{|f|}(0)=0$,
then the analytic function  $\widehat{|f|}+i\widetilde{|f|}$  vanishes on $\T$ almost everywhere. Hence $\widehat{|f|}+i\widetilde{|f|}$  vanishes on the whole disk. Then $\widehat{|f|}(z)\equiv 0$, $z\in \D$.
This implies $f(\zeta)=0$ for almost every $\zeta \in \T$. The proof is complete.

\section{Proof of Theorem \ref{t2}}
  Let $f$ be the symbol of a bounded multiplication operator $M_f$ on $\qpst$ space. If
$\lambda \not\in \sigma(M_f)$, then $M_f-\lambda E$ is invertible. Clearly,  the inverse operator of $M_f-\lambda E$ is $M_{\f{1}{f-\lambda}}$. By the open mapping theorem, $M_{\f{1}{f-\lambda}}$ is also bounded on $\qpst$, and  Theorem \ref{mt1} gives $\f{1}{f-\lambda} \in L^\infty(\T)$. Then
$$
\left|\f{1}{f(\zeta)-\lambda}\right|<2 \Big \|\f{1}{f-\lambda}\Big\|_{L^\infty(\T)}, \qquad a.e. \quad \zeta \in \T.
$$
Namely, the set
$$
\left\{\zeta \in \T: |f(\zeta)-\lambda|< \left ( 2\left\|\f{1}{f-\lambda}\right\|_{L^\infty(\T)}\right  )^{-1} \right\}
$$
has measure zero. Thus $\lambda \not\in \mathcal R(f)$.

Conversely,  let $\lambda \not\in \mathcal R(f)$. Then there exists some positive constant  $\delta$ such that the set
$\{\zeta \in \T: |f(\zeta)-\lambda|<\delta\} $ has measure zero. Hence
$$
\f{1}{f(\zeta)-\lambda}\leq \f{1}{\delta},  \ \ a.e. \ \ \zeta \in \T.
$$
Thus  $M_f-\lambda E$ is injective.
Using $f\in M(\qpst)$ and Theorem \ref{mt1}, we obtain
$$
\sup_{I\subseteq \T} \f{1}{|I|^s}\left(\log\f{2}{|I|}\right)^p\int_I\int_I \f{\left|\f{1}{f(\zeta)-\lambda}-\f{1}{f(\eta)-\lambda}\right|^p}{|\zeta-\eta|^{2-s}}|d\zeta||d\eta| <\infty.
$$
Applying Theorem \ref{mt1} again, one gets $\f{1}{f-\lambda}\in M(\qpst)$. Then for any $g\in \qpst$, we obtain $\f{g}{f-\lambda}\in \qpst$ and
$$
(M_f-\lambda E)\f{g}{f-\lambda}=g.
$$
Then   $M_f-\lambda E$ is surjective. Thus $M_f-\lambda E$ is invertible and hence $\lambda \not\in \sigma(M_f)$.

\section{The analytic version of Theorems \ref{mt1} and \ref{t2}}

\begin{thm}\label{AV-1} Let $1<p_1, p_2<\infty$ and $0<s, r<1$. Then the following are true.
\begin{enumerate}
\item [(1)] If $p_1\leq p_2$ and $s\leq r$, then $f\in M(\Q_s^{p_1}(\D), \Q_r^{p_2}(\D))$ if and only if $f\in H^\infty$ and
\begin{equation}\label{Eq5-1}
\sup_{I\subseteq \D} \f{1}{|I|^r}\left(\log\f{2}{|I|}\right)^{p_2}\int_I\int_I \f{|f(\zeta)-f(\eta)|^{p_2}}{|\zeta-\eta|^{2-r}}|d\zeta||d\eta| <\infty.
\end{equation}
\item[(2)]Let  $p_1>p_2$ and $s\leq r$. If $\f{1-s}{p_1}>\f{1-r}{p_2}$, then $f\in M(\Q_s^{p_1}(\D), \Q_r^{p_2}(\D))$ if and only if $f\in H^\infty$ and $f$ satisfies \eqref{Eq5-1}. If $\f{1-s}{p_1}\leq\f{1-r}{p_2}$, then  $M(\Q_s^{p_1}(\D), \Q_r^{p_2}(\D))=\{0\}$.
\item[(3)] If $s>r$, then $M(\Q_s^{p_1}(\D), \Q_r^{p_2}(\D))=\{0\}$.
\end{enumerate}
\end{thm}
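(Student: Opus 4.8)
The plan is to reduce Theorem \ref{AV-1} to the already-established boundary version (Theorem \ref{mt1}) by exploiting Corollary \ref{C1}: for any $h\in H^1$ one has $h\in\Q^{p}_\tau(\D)$ if and only if the boundary function $h\in\Q^{p}_\tau(\T)$. Since constants lie in $\Q_s^{p_1}(\D)$, any $f\in M(\Q_s^{p_1}(\D),\Q_r^{p_2}(\D))$ satisfies $f=f\cdot 1\in\Q_r^{p_2}(\D)\subseteq H^2$, so $f$ is analytic with boundary values, and for analytic $g$ the product $fg$ is again analytic and lies in $H^1$; thus by Corollary \ref{C1} the membership $fg\in\Q_r^{p_2}(\D)$ is equivalent to $fg\in\Q_r^{p_2}(\T)$ for the boundary values. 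The crucial point that makes the reduction work is that \emph{every} test function used in the proof of Theorem \ref{mt1} is analytic: the functions $g_a(z)=\log\frac{2}{1-\bar a z}$ of part (1), the lacunary series $g_t$ of Lemma \ref{LS}, and the Blaschke products of Lemma \ref{KC} all lie in $\Q_s^{p_1}(\D)$.

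First I would treat part (1). For necessity I repeat the argument of the boundary case but testing directly against the \emph{analytic} functions $g_a\in\Q_s^{p_1}(\D)$, for which $\sup_{a}\|g_a\|_{\Q_s^{p_1}(\D)}<\infty$ by \cite[Lemma 2.6]{PZ}; the $BMO$ and Stegenga estimates are then applied to the boundary values of the analytic products $fg_a$, yielding exactly condition \eqref{Eq5-1} together with boundedness of the boundary values of $f$. The only new ingredient is to record that, for analytic $f$, the boundary values lie in $L^\infty(\T)$ if and only if $f\in H^\infty$ (with equal norms), by Fatou's theorem and the maximum principle. For sufficiency I argue directly in the disk: writing $(fg)'=f'g+fg'$, the term $fg'$ is controlled by $\|f\|_{H^\infty}^{p_2}$ times the $r$-Carleson measure $|g'|^{p_2}(1-|z|^2)^{p_2-2+r}\,dA$ coming from $g\in\Q_s^{p_1}(\D)\subseteq\Q_r^{p_2}(\D)$ (Theorem \ref{inclusions}), while the term $f'g$ is handled exactly as in the estimate of $T_1,T_2$ in the proof of part (1): Lemma \ref{Lem2} gives that $|f'|^{p_2}(1-|z|^2)^{p_2-2+r}\,dA$ is a $p_2$-logarithmic $r$-Carleson measure, and the Bloch growth $|g(z)|\lesssim\|g\|_{\B}\log\frac{2}{1-|z|}$ lets the logarithm be absorbed. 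This shows $|(fg)'|^{p_2}(1-|z|^2)^{p_2-2+r}\,dA$ is an $r$-Carleson measure, i.e. $fg\in\Q_r^{p_2}(\D)$.

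Part (2) splits as in the boundary case. When $\frac{1-s}{p_1}>\frac{1-r}{p_2}$ one has $\Q_s^{p_1}(\D)\subseteq\Q_r^{p_2}(\D)$ by Theorem \ref{inclusions}, so $M(\Q_r^{p_2}(\D))\subseteq M(\Q_s^{p_1}(\D),\Q_r^{p_2}(\D))$ and the characterization follows from part (1). When $\frac{1-s}{p_1}\le\frac{1-r}{p_2}$ there is an analytic lacunary series in $\Q_s^{p_1}(\D)\setminus\Q_r^{p_2}(\D)$ (constructed in the proof of Theorem \ref{inclusions}), and I re-run the proof of Lemma \ref{LS} verbatim: all functions there are analytic, the quantities $\|fg_t\|_{\Q_r^{p_2}(\T)}$ agree with $\|fg_t\|_{\Q_r^{p_2}(\D)}$ by Corollary \ref{C1}, and the same divergence-versus-finiteness contradiction forces $f\equiv 0$. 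Part (3) is obtained identically to the boundary proof of part (3): for $s>r$ and any $e^{i\theta}\in\T$ take the Blaschke product $B$ associated with the sequence of Lemma \ref{KC}, which lies in $\Q_s^{p_1}(\D)$ by Theorem \ref{T-Inner}; then $fB\in\Q_r^{p_2}(\D)$, and Lemma \ref{LIn} together with the Carleson--Newman and tangential-approximation argument based on Proposition \ref{P-TA} and \cite{NRS} forces $\widehat{|f|}\equiv0$, whence $f=0$ a.e. on $\T$ and so $f\equiv0$ as an analytic function.

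The main obstacle is conceptual rather than computational: being an analytic multiplier is a priori a \emph{weaker} requirement than being a boundary multiplier, since one only tests against analytic $g$, so one cannot simply quote an inclusion $M(\Q_s^{p_1}(\T),\Q_r^{p_2}(\T))\subseteq M(\Q_s^{p_1}(\D),\Q_r^{p_2}(\D))$ in the hard direction. The point to verify carefully is therefore that each necessity argument in Theorem \ref{mt1} uses only analytic test functions and products of analytic functions, so that Corollary \ref{C1} applies throughout; once this is checked, the characterizing conditions (which involve only $f$ itself) are identical in the disk and on the circle, giving $M(\Q_s^{p_1}(\D),\Q_r^{p_2}(\D))=M(\Q_s^{p_1}(\T),\Q_r^{p_2}(\T))\cap H(\D)$. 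The most delicate single step remains the tangential-approximation estimate of part (3), which must be reproduced in the analytic setting, but it already works with $\widehat{|f|}$ and the analytic Blaschke product and so transfers without change.
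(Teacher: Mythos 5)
Your proposal is correct, and for parts (1) and (2) it coincides with what the paper actually does: the paper's entire proof of these parts is the instruction to repeat the proof of Theorem \ref{mt1}, and the observation you single out --- that every test function in the necessity arguments (the logarithms $h_a(z)=\log\frac{2}{1-\bar a z}$, the Rademacher-randomized lacunary series $g_t$, the Blaschke products) is analytic, so that Corollary \ref{C1} transfers each membership statement between $\D$ and $\T$ --- is exactly what makes that instruction legitimate; your in-disk sufficiency estimate for (1) is the analytic rendition of the paper's $T_1+T_2$ argument, simplified by the fact that no harmonic conjugate is needed. The one place you genuinely diverge from the paper is part (3). Your route (re-run the boundary proof: Lemma \ref{KC}, Lemma \ref{LIn}, the Carleson--Newman and subharmonicity estimates, then Proposition \ref{P-TA} and the Nagel--Rudin--Shapiro tangential limits to force $\widehat{|f|}\equiv 0$) does work and is covered by the paper's blanket instruction, but the paper showcases a different, purely analytic argument via zero sets: it takes $z_n=\left(1-n^{-1/t}\right)e^{i\theta_n}$ with $r<t<s$ and $\theta_n=\sum_{k=1}^{n-1}\frac{1}{k}+\frac{1}{2n}$, so that $\sum_n(1-|z_n|)^s\delta_{z_n}$ is an $s$-Carleson measure by \cite[Theorem 8]{PP2} (hence the associated Blaschke product $B$ lies in $\Q_s^{p_1}(\D)$ by Theorem \ref{T-Inner}), while $\sum_n(1-|z_n|)^t=\infty$ implies, via the proof of \cite[Theorem 5.10]{NRS}, that $\{z_n\}$ is not a zero set of $B_{p_2}(r)$. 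If a multiplier $f=SO\not\equiv 0$ existed ($S$ inner, $O$ outer), then $fB\in\Q_r^{p_2}(\D)\subseteq B_{p_2}(r)$, and the Aleman--Korenblum division theorem \cite[Proposition 4.2]{AK} gives $OB\in B_{p_2}(r)$; since $O$ is zero-free, $\{z_n\}$ would be a zero set of $B_{p_2}(r)$, a contradiction. The zero-set proof is shorter --- it dispenses with Lemma \ref{LIn}, the pseudo-hyperbolic estimates and Proposition \ref{P-TA} entirely --- but it exploits inner--outer factorization, which has no analogue on the circle; your transfer argument is the one that works verbatim in both settings.
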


\begin{proof} Just  follow the proof of Theorem \ref{mt1}. Now we give a different proof of  (3) using zero sets. Set
$$
z_n=\left(1-\f{1}{n^{1/t}}\right)e^{i\theta_n},\ \ n=2, 3, \cdots,
$$
where $r<t<s$ and
$$
\theta_n=\sum_{k=1}^{n-1}\f{1}{k}+\f{1}{2n},\ \ n=2, 3, \cdots.
$$
By \cite[Theorem 8]{PP2}, $\sum_n (1-|z_n|)^s \delta_{z_n}$ is an $s$-Carleson measure. Also, since $\sum_{n=1}^\infty (1-|z_n|)^t=\infty$, it follows from the proof of \cite[Theorem 5.10]{NRS} that $\{z_n\}$ is not a zero set of $B_{p_ 2}(r)$ (look also at the proof of Proposition \ref{P-TA}).
Let $B$ be the Blaschke product with zero sequence $\{z_n\}$. Then $B\in \Q_s^{p_1}(\D) \setminus \Q_r^{p_2}(\D)$. If
$f\in M(\Q_s^{p_1}(\D), \Q_r^{p_2}(\D))$, then  $fB\in \Q_r^{p_2}(\D)\subseteq B_{p_2}(r)$. If $f \not\equiv 0$, then there exists an inner
function $S$ and an outer function $O$ such that $f=SO$. By \cite[Proposition 4.2]{AK}, $OB\in B_{p_2}(r)$. Hence $\{z_n\}$ is a zero set of $B_{p_2}(r)$. This is a contradiction. Thus $f\equiv 0$.
\end{proof}

Next we  prove  the analytic version of Theorem 1.2, without using Theorem 5.1.

\begin{thm} Suppose $1<p<\infty$ and $0<s<1$. Let $f$ be the symbol of a bounded multiplication operator $M_f$ on $\qpsd$ space. Then $\sigma(M_f)=\overline{f(\D)}$.
\end{thm}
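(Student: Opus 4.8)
The plan is to prove the two inclusions $\overline{f(\D)}\subseteq\sigma(M_f)$ and $\sigma(M_f)\subseteq\overline{f(\D)}$ directly, so that the argument does not rely on the multiplier description of Theorem \ref{AV-1}. For the first inclusion I would show that $M_f-\lambda E$ fails to be surjective whenever $\lambda\in f(\D)$. Indeed, fix $z_0\in\D$ with $f(z_0)=\lambda$. Since $\qpsd\subseteq\B$, evaluation at $z_0$ is a bounded functional, and for every $h\in\qpsd$ the image $(M_f-\lambda E)h=(f-\lambda)h$ vanishes at $z_0$. Hence the range of $M_f-\lambda E$ lies in the proper closed subspace $\{k\in\qpsd:k(z_0)=0\}$, which does not contain the constant function $1\in\qpsd$. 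Thus $M_f-\lambda E$ is not invertible and $\lambda\in\sigma(M_f)$; since $\sigma(M_f)$ is closed, this gives $\overline{f(\D)}\subseteq\sigma(M_f)$.

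For the reverse inclusion, take $\lambda\notin\overline{f(\D)}$, so that $\delta:=\inf_{z\in\D}|f(z)-\lambda|>0$ and $g:=1/(f-\lambda)$ is a non-vanishing bounded analytic function with $\|g\|_{H^\infty}\le 1/\delta$. It suffices to prove that $g\in M(\qpsd)$: then $M_g$ is bounded, and since $g\,(f-\lambda)\equiv 1$ we obtain $M_g(M_f-\lambda E)=(M_f-\lambda E)M_g=E$, so $M_f-\lambda E$ is invertible and $\lambda\notin\sigma(M_f)$. To verify $g\in M(\qpsd)$ I would use the Carleson-measure description recalled in Section 2: an analytic $F$ belongs to $\qpsd$ exactly when $|F'(z)|^p(1-|z|^2)^{p-2+s}\,dA(z)$ is an $s$-Carleson measure.

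The key computation rests on the identity
$$
(gh)'=\f{h'}{f-\lambda}-\f{h\,f'}{(f-\lambda)^2},\qquad h\in\qpsd,
$$
which yields $|(gh)'|\le \delta^{-1}|h'|+\delta^{-2}|f'h|$. The measure $|h'|^p(1-|z|^2)^{p-2+s}\,dA$ is $s$-Carleson because $h\in\qpsd$. To handle $|f'h|$, I would first record that multipliers are bounded, $M(\qpsd)\subseteq H^\infty$: testing $M_f$ on $g_w(z)=\log\f{2}{1-\bar w z}$, which satisfy $\sup_{w}\|g_w\|_{\qpsd}<\infty$ by \cite[Lemma 2.6]{PZ}, and combining $\|fg_w\|_{\qpsd}\lesssim\|M_f\|$ with the Bloch growth estimate $|(fg_w)(w)|\lesssim|f(0)|+\|fg_w\|_{\B}\log\f{2}{1-|w|^2}$ and the value $g_w(w)=\log\f{2}{1-|w|^2}$, one divides by $\log\f{2}{1-|w|^2}\ge\log 2$ to get $\sup_{w}|f(w)|<\infty$. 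With $f\in H^\infty$ established, I write $f'h=(fh)'-f\,h'$; as $f\in M(\qpsd)$ forces $fh\in\qpsd$, both $|(fh)'|^p(1-|z|^2)^{p-2+s}\,dA$ and $\|f\|_{H^\infty}^p|h'|^p(1-|z|^2)^{p-2+s}\,dA$ are $s$-Carleson measures, so the same holds for $|f'h|^p(1-|z|^2)^{p-2+s}\,dA$. Consequently $|(gh)'|^p(1-|z|^2)^{p-2+s}\,dA$ is $s$-Carleson and $gh\in\qpsd$, giving $g\in M(\qpsd)$.

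The main obstacle is this last step, the control of $|f'h|$: one cannot bound $f'$ pointwise, so the decomposition $f'h=(fh)'-f\,h'$ together with $f\in H^\infty$ is indispensable, and it is precisely here that the hypothesis that $f$ is a \emph{multiplier} (rather than merely bounded) is used. Establishing $M(\qpsd)\subseteq H^\infty$ beforehand is the other delicate point; after that, the remaining arguments are the elementary algebra of Carleson measures and the definition of the spectrum, so the proof goes through without invoking Theorem \ref{AV-1}.
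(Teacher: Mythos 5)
Your proposal is correct and takes essentially the same route as the paper: for $\lambda\in f(\D)$ one observes $M_{f-\lambda}$ is not surjective (so $\overline{f(\D)}\subseteq\sigma(M_f)$ by closedness), and for $\lambda\notin\overline{f(\D)}$ one shows $g=1/(f-\lambda)\in M(\qpsd)$ via the same key decomposition $f'h=(fh)'-f\,h'$ combined with $f\in H^\infty$ and the Carleson-measure description of $\qpsd$. The only difference is that you explicitly justify $M(\qpsd)\subseteq H^\infty$ by testing against the functions $g_w(z)=\log\f{2}{1-\bar w z}$ (the same device the paper uses in proving Theorem \ref{mt1}), whereas the paper's proof simply asserts ``Clearly, $f\in H^\infty$''.
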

\begin{proof} Let $\lambda \in f(\D)$. Note that $M_f-\lambda E=M_{f-\lambda}$. Clearly,  $M_{f-\lambda}$ is not invertible. Thus $\lambda \in \sigma(M_f)$. Since $\sigma(M_f)$ is compact, we get $\overline{f(\D)}\subseteq \sigma(M_f)$.

Let $\lambda\not\in\overline{f(\D)}$. Then there exists a positive constant $C$ such that
$$
\inf_{z\in \D}|f(z)-\lambda|>C,
$$
which shows
$g(z)=\frac{1}{f(z)-\lambda}\in H^\infty$. Thus  $M_f-\lambda E$ is injective. Clearly, $f\in H^\infty$.  For any $h\in \qpsd$,  $fh\in \qpsd$. Consequently, for any $a\in \D$,
\[
\begin{split}
 \int_{\D} |g'(z)&h(z)|^p(1-|z|)^{p-2}(1-|\sigma_a(z)|)^sdA(z)
 \\
&\thickapprox  \int_{\D} \f{|f'(z)|^p|h(z)|^p}{|f(z)-\lambda|^{2p}}(1-|z|)^{p-2}(1-|\sigma_a(z)|)^sdA(z)
\\
&\lesssim \int_{\D} |f'(z)h(z)|^p(1-|z|)^{p-2}(1-|\sigma_a(z)|)^sdA(z)
\\
&\lesssim  \|fh\|^p_{\qpsd}+
\int_{\D} |f(z)h'(z)|^p(1-|z|)^{p-2}(1-|\sigma_a(z)|)^sdA(z)
\\
&\lesssim  \|fh\|^p_{\qpsd}+ \|f\|_{H^\infty}^p \|h\|^p_{\qpsd}.
\end{split}
\]
Thus $gh\in \qpsd$. Then we get $g\in M(\qpsd)$. It follows that $M_f-\lambda E$ is surjective. Hence $\lambda \not\in \sigma(M_f)$. The proof is complete.
\end{proof}
\mbox{}
\\
\noindent
\textbf{Acknowledgements:}
The work was done while G. Bao visiting the Department of Applied Mathematics and Analysis, University of Barcelona in 2015.
 He thanks the support given by the IMUB during his visit.

\end{document}